\theoremstyle{plain}
\newtheorem{theorem}{Theorem}[section]
\newtheorem{lemma}[theorem]{Lemma}
\newtheorem{proposition}[theorem]{Proposition}
\newtheorem{corollary}[theorem]{Corollary}
\newtheorem{remark}[theorem]{Remark}
\newcommand{\inv}{^{-1}}
\newcommand{\E}{\mathbb{E}}
\newcommand{\ind}{\mathds{1}}
\begin{document}

\begin{frontmatter}
\title{Stein's Method for Stationary Distributions of  Markov Chains and Application to Ising Models}
\runtitle{Stein's Method for Stationary Distributions of Markov Chains}


\begin{aug}
\author{\fnms{Guy} \snm{Bresler}\ead[label=e1]{guy@mit.edu}}
\and
\author{\fnms{Dheeraj} \snm{Nagaraj}\thanksref{t1}\ead[label=e2]{dheeraj@mit.edu}}

\thankstext{t1}{This work was supported in part by grants NSF CCF-1565516, ONR N00014-17-1-2147, and DARPA W911NF-16-1-0551.}
\runauthor{Bresler and Nagaraj}

\affiliation{Massachusetts Institute of Technology }

\address{Dept of EECS\\
Mass. Inst. of Tech. \\
Cambridge, MA 02139 \\
\printead{e1}}

\address{Dept of EECS\\
Mass. Inst. of Tech. \\
Cambridge, MA 02139 \\
\printead{e2}}
\end{aug}

\begin{abstract}
 We develop a new technique, based on Stein's method, for comparing two stationary distributions of irreducible Markov Chains whose update rules are close in a certain sense. We apply this technique to compare Ising models on $d$-regular expander graphs to the Curie-Weiss model (complete graph) in terms of pairwise correlations and more generally $k$th order moments. Concretely, we show that $d$-regular Ramanujan graphs approximate the $k$th order moments of the Curie-Weiss model to within average error $k/\sqrt{d}$ (averaged over size $k$ subsets), \emph{independent of graph size}. The result applies even in the low-temperature regime; we also derive simpler approximation results for functionals of Ising models that hold only at high temperatures.
\end{abstract}

\begin{keyword}[class=MSC]
\kwd{60C05}
\kwd{60F05}
\kwd{60B10}
\end{keyword}
\begin{keyword}
\kwd{Ising model}
\kwd{Stein's method}
\kwd{graph sparsification}
\kwd{Curie-Weiss}
\end{keyword}

\end{frontmatter}

\section{Introduction}

Markov random fields (MRFs) are widely used in a variety of applications as models for high-dimensional data. The primary reasons are interpretability of the model, whereby edges between variables indicate direct interaction, and efficiency of carrying out inference tasks such as computation of marginals or posteriors. Both of these objectives are helped by \emph{sparsity} of the model: edges can more easily be assigned meaning if there are few of them, and each update step in inference algorithms such as belief propagation or Gibbs sampler require computation depending on the degrees of the nodes. (While each update or iteration can be carried out more efficiently in a sparse model, it is not clear how to compare the number of iterations needed. In general, carrying out inference tasks is computationally hard even in bounded-degree models~\cite{sly2010computational}.) 

This paper takes a first step towards understanding what properties of an MRF with many edges can be captured by a model with far fewer edges. We focus on the Ising model, the canonical binary pairwise graphical model. Originally introduced by statistical physicists to study phase transitions in magnetic materials~\cite{ising1925beitrag,brush1967history}, these distributions capture rich dependence structure and are widely used in a variety of applications including for modeling images, neural networks, voting data and biological networks~\cite{banerjee2008model,greig1989exact, schneidman2006weak}. 
The Ising model assigns to each configuration $x\in \{-1,+1\}^n$ probability
$$
p(x)= \frac1Z \exp\big({\tfrac{1}{2}x^\intercal J x}\big)\,,
$$
where $J\in \mathbb{R}^{n\times n}$ is a symmetric matrix of interactions and the partition function $Z$ normalizes the distribution. The support of the interaction matrix $J$ is represented by a graph $G_J=([n], E_J)$ with $\{i,j\}\in E_J$ if and only if $J_{ij}\neq 0$. The Curie-Weiss model at `inverse temperature' $\beta$ is the Ising model on the complete graph with all entries of the interaction matrix $J$ equal to~$\frac{\beta}{n}$.

Sparsification of graphs~\cite{spielman2011spectral,batson2013spectral} has in recent years had a large impact in theoretical computer science. The notion of approximation in that literature is spectral: given a graph with Laplacian $L$, the objective is to find a sparser graph with Laplacian $M$ such that $x^\intercal L x \approx x^\intercal M x$ for all $x$. 
The Ising model sufficient statistic, $x^\intercal J x$, is thus approximately preserved by spectral graph sparsification, but it is not clear how this translates to any sort of notion of nearness of the \emph{distributions} of corresponding Ising models, because of their inherent non-linearity. 

In this paper we initiate the study of the interplay between spectral approximation of graphs and Ising models by showing that low-order moments of the Curie-Weiss model (Ising model on the complete graph with uniform edge-weights) are accurately represented by expander graphs (which are spectral approximations of the complete graph). As discussed in~\cite{bresler2016learning},
low-order moments capture the probabilistic content of a model relevant to the machine learning task of making predictions based on partial observations. Our main result shows that $k$th order moments in the Curie-Weiss model are approximated to average accuracy $k/\sqrt{d}$ by $d$-regular approximate Ramanujan graphs (and more generally to average accuracy $k\epsilon$ by $\epsilon$-expander graphs). 

\begin{theorem}[Informal version of Theorem~\ref{main_application}]
The $k$th order moments of the Curie-Weiss model on $n$ nodes with inverse temperature $\beta$ are approximated to within average error $kC(\beta)/\sqrt{d}$ by an Ising model on a $d$-regular approximate Ramanujan graph. 
\end{theorem}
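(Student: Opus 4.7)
The plan is to realize both the Curie-Weiss measure $\mu$ and the Ising measure $\nu$ on a $d$-regular Ramanujan graph $G$ as stationary distributions of their respective Glauber dynamics, with generators $\mathcal{L}_\mu$ and $\mathcal{L}_\nu$, and then apply the Stein-for-stationary-distributions machinery developed earlier in the paper. For any observable $f$, solving the Poisson equation $\mathcal{L}_\mu g = f - \mathbb{E}_\mu f$ and integrating against $\nu$ yields
\[
\mathbb{E}_\nu[f] - \mathbb{E}_\mu[f] \;=\; \mathbb{E}_\nu\big[(\mathcal{L}_\mu - \mathcal{L}_\nu) g\big].
\]
I would take $f = f_S = \prod_{i \in S} x_i$ for $|S|=k$; by exchangeability of $\mu$, the left side depends only on $k$, so the $k$th-moment comparison reduces to bounding the right-hand side, averaged over subsets $S$.

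The next step is to estimate the generator difference. Each Glauber generator has the form $\mathcal{L}_\eta g(x) = \sum_i c^\eta_i(x)\,[g(x^i)-g(x)]$, where the flip rate $c^\eta_i(x)$ is a $\tanh$ of the local field $h^\eta_i(x) = \sum_j J^\eta_{ij} x_j$. Because $\tanh$ is $1$-Lipschitz, $|c^\mu_i(x) - c^\nu_i(x)| \lesssim \beta\,|h^\mu_i(x) - h^\nu_i(x)|$, so after summing in $i$,
\[
|(\mathcal{L}_\mu - \mathcal{L}_\nu) g(x)| \;\lesssim\; \beta\,\sum_{i} |(\tfrac{1}{n}\mathbf{1}\mathbf{1}^\intercal x - \tfrac{1}{d}A\,x)_i|\;\cdot\; \|\Delta_i g\|_\infty,
\]
where $A$ is the adjacency matrix of $G$ and $\Delta_i g(x) := g(x^i)-g(x)$. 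The Ramanujan hypothesis gives $\|\tfrac{1}{n}\mathbf{1}\mathbf{1}^\intercal - \tfrac{1}{d}A\|_{\mathrm{op}} \le 2\sqrt{d-1}/d = O(1/\sqrt{d})$, and hence $\sum_i (h^\mu_i - h^\nu_i)^2 \le (4/d)\|x\|_2^2 = 4n/d$. By Cauchy-Schwarz this controls the summed generator difference in terms of a global $\ell^2$ bound on the discrete derivatives $\Delta_i g$, with the Ramanujan spectral gap supplying the $1/\sqrt{d}$ factor.

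To pass from one observable to the $k/\sqrt{d}$ bound averaged over $|S|=k$, I would use that $f_S$ depends on at most $k$ coordinates, so $\Delta_i f_S$ vanishes for $i \notin S$; a coupling or Lindeberg-style analysis of the Curie-Weiss chain (in which each step changes the magnetization by $\pm 2/n$) then propagates this into a bound of the form $\sum_i \|\Delta_i g_S\|_\infty^2 \le k^2 C(\beta)^2 / n$. Combined with the spectral estimate from the previous paragraph, this gives the advertised $kC(\beta)/\sqrt{d}$ average error, uniformly in $n$.

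The main obstacle is bounding the discrete-gradient norms of the Stein solution $g = g_S$ for Curie-Weiss with a clean, $n$-uniform constant $C(\beta)$, especially \emph{in the low-temperature regime} $\beta>1$ where the full-configuration Glauber chain mixes only in exponential time. I would attempt to reduce this to a one-dimensional Stein problem by projecting onto the magnetization $m(x) = \tfrac{1}{n}\sum_i x_i$, whose induced dynamics is a birth-death chain with drift $\tanh(\beta m) - m$, and solve the corresponding scalar Poisson equation via standard one-dimensional methods (or equivalently via an exchangeable-pairs coupling). The delicate point is that at low temperature the scalar drift has two stable fixed points $\pm m^*$, so uniform gradient bounds for $g_S$ have to be obtained on each well separately and then glued, with the product structure of $f_S$ ensuring that only the $|S|=k$ coordinates where it is nontrivial contribute to the final error — this is precisely where the $k$ (rather than $n$) in the rate comes from.
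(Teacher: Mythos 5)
Your overall framework --- Poisson equation, generator-difference bound, and the spectral estimate $\|\tfrac1n\mathbf{1}\mathbf{1}^\intercal - \tfrac1d A\|_{\mathrm{op}} = O(1/\sqrt d)$ driving a Cauchy--Schwarz --- matches the paper. But there is a genuine gap in the step that is supposed to make the rate $k/\sqrt d$ come out, namely the claimed gradient bound $\sum_i\|\Delta_i g_S\|_\infty^2 \le k^2 C(\beta)^2/n$ for the Poisson solution $g_S$ of a single observable $f_S = \prod_{i\in S} x_i$. This is dimensionally wrong: $f_S$ is $2$-Lipschitz in each of the $k$ coordinates of $S$, so $\sum_i\|\Delta_i g_S\|_\infty^2$ cannot be $o(k)$, and in fact for the Curie-Weiss Glauber chain the Poisson solution spreads the gradient substantially. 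In the contracting regime one can compute $\Delta_i(g_S) \lesssim [a^\intercal(I-G)^{-1}]_i$ with $G = (1-\tfrac1n)I + \tfrac1n|L|$ and $a$ supported on $S$ with $a_i = 2$; since $(I-G)^{-1} = n(I - |L|)^{-1}$, coordinates in $S$ get contribution $\Theta(n)$ and coordinates outside $S$ get $\Theta(k)$, giving $\sum_i \Delta_i(g_S)^2 = \Theta(kn^2)$ rather than $k^2/n$. Pushing this through the spectral bound yields only $|\mathbb{E}_\mu f_S - \mathbb{E}_\nu f_S| \lesssim \epsilon\sqrt{kn}$, which grows with $n$ and is even worse than the trivial bound of $2$ whenever $n \gtrsim d/k$. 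Averaging over $S$ cannot repair this because each term is the same size; the paper instead puts \emph{all} $\binom nk$ monomials together with signs into a single $\tfrac1n$-Lipschitz observable $f_C = \frac{1}{2k\binom nk}\sum_R C_R \prod_{i\in R} x^i$ (taking $C_R = \mathrm{sgn}(\rho^{(k)}[R]-\tilde\rho^{(k)}[R])$), whose Poisson solution $h$ has $\Delta_i(h) = O(1)$; this cancellation is exactly what a per-$S$ Stein bound cannot see.

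Your low-temperature strategy --- ``project onto the magnetization and solve a scalar Poisson equation'' --- also does not apply directly: $f_S$ (and hence $g_S$, or the paper's $h$) is not a function of $m(x)$, so the induced birth--death chain on $m$ does not carry the Stein problem. The paper circumvents the slow mixing of the full Glauber dynamics at $\beta>1$ by passing to the \emph{restricted} dynamics on $\Omega^+$ (using symmetry of $f$ to justify this via Lemma~\ref{symmetric_solution}), and then runs a coupling argument on the \emph{full} configuration space in which the magnetization is used only as a Lyapunov-type control: monotone coupling together with a supermartingale argument gives contraction while $m$ stays in the well (Lemma~\ref{super_martingale_contraction}), a birth--death comparison gives an exponentially small escape probability (Lemma~\ref{lem:escape_time_bound}), and the naive $O(n\log n)$ mixing bound handles the remainder. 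Your intuition that the two wells $\pm m^*$ must be handled separately and then glued by symmetry is correct and is exactly what the censored dynamics accomplishes, but the magnetization projection alone is not enough because it destroys the observable.
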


We note that random regular graphs are known to be approximately Ramanujan with high probability.
The proof is based on a coupling argument together with the  abstract comparison technique developed in this paper; in order to deal with the low-temperature regime where Glauber dynamics mixes slowly, we use the restricted dynamics studied in~\cite{levin2010glauber}. A much weaker bound can be obtained via the Gibbs variational principle, and we outline that method in Section~\ref{s:naive}.

The techniques developed in the paper are likely to be of independent interest because of their applicability to other models, but we do not pursue that here. We frame our basic goal as that of comparing the expectations of a Lipschitz function under two distributions, and to that end we prove a bound in terms of nearness of \emph{Markov kernels} with desired stationary distributions. Specifically, our main abstract result, Theorem~\ref{main_theorem}, is stated in terms of the Glauber dynamics for the two distributions. We prove this theorem in Section~\ref{s:abstractResult}.
The technique is based on Stein's method, which we review briefly in Section~\ref{s:prelim} along with relevant background on the Glauber dynamics and the Poisson equation. For any distribution $\mu(\cdot)$ over $\{-1,1\}^n$, we denote by $\mu_i(\cdot|x^{(\sim i)})$ the conditional distribution of the $i$th coordinate when the value of every other coordinate (denoted by $x^{(\sim i)}$) is fixed.

\begin{theorem}[Short version of Theorem~\ref{main_theorem}]\label{t:main_short}
	Let $\mu$ and $\nu$ be probability measures on $\Omega = \{-1,+1\}^n$. Let $P$ be the kernel of Glauber dynamics with respect to $\mu$. Let $f : \Omega \to \mathbb{R}$ be any function and let $h:\{-1,1\}^n\to \mathbb{R}$ be a solution to the Poisson equation $h - Ph = f - \mathbb{E}_\mu f$. Then
	\begin{equation}
	|\mathbb{E}_\mu f - \mathbb{E}_\nu f| \leq \mathbb{E}_{\nu}\Big(\frac{1}{n}\sum_{i=1}^{n} |\Delta_i(h)| \cdot \|\mu_i(\cdot|x^{(\sim i)})-\nu_i(\cdot|x^{(\sim i)})\|_{\mathsf{TV}}\Big)\,,
	\end{equation}
where $\Delta_i(h)$ is the discrete derivative of $h$ along the coordinate $i$.
\end{theorem}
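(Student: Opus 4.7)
The natural plan is the classical Stein-type manipulation: integrate the Poisson equation against $\nu$ and convert the right-hand side into a difference of two one-step Markov averages, one driven by $\mu$'s Glauber kernel $P$ and the other by $\nu$'s Glauber kernel, call it $Q$. Starting from $h - Ph = f - \mathbb{E}_\mu f$ and taking $\mathbb{E}_\nu$ on both sides gives $\mathbb{E}_\nu f - \mathbb{E}_\mu f = \mathbb{E}_\nu h - \mathbb{E}_\nu Ph$. Since $\nu$ is stationary for its own Glauber kernel $Q$, we have $\mathbb{E}_\nu h = \mathbb{E}_\nu Q h$, and therefore
\begin{equation*}
\mathbb{E}_\nu f - \mathbb{E}_\mu f = \mathbb{E}_\nu[(Q-P)h].
\end{equation*}
This turns the problem into controlling $(Q-P)h$ pointwise, which is precisely where the conditional distributions $\mu_i(\cdot\,|\,x^{(\sim i)})$ and $\nu_i(\cdot\,|\,x^{(\sim i)})$ appear explicitly.

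Next I would unfold the Glauber kernels. Both $P$ and $Q$ first choose a coordinate $i$ uniformly and then resample $x_i$ from the respective conditional, so for each $x$,
\begin{equation*}
(Q-P)h(x) \;=\; \frac{1}{n}\sum_{i=1}^{n}\sum_{y_i\in\{-1,+1\}}\bigl[\nu_i(y_i\,|\,x^{(\sim i)}) - \mu_i(y_i\,|\,x^{(\sim i)})\bigr]\, h(x^{(i\to y_i)}).
\end{equation*}
Because $\nu_i(\cdot\,|\,x^{(\sim i)})$ and $\mu_i(\cdot\,|\,x^{(\sim i)})$ are both probability measures, their difference sums to zero over $y_i$; subtracting a constant inside the bracket (say $h(x^{(i\to -1)})$) reduces the inner sum over the two-point set $\{-1,+1\}$ to a single term involving the discrete derivative $\Delta_i(h)(x) = h(x^{(i\to +1)}) - h(x^{(i\to -1)})$. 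Moreover, for measures on a two-point set, total variation distance equals $|\nu_i(+1\,|\,x^{(\sim i)}) - \mu_i(+1\,|\,x^{(\sim i)})|$, so each inner sum is bounded in absolute value by $|\Delta_i(h)(x)| \cdot \|\mu_i(\cdot\,|\,x^{(\sim i)}) - \nu_i(\cdot\,|\,x^{(\sim i)})\|_{\mathsf{TV}}$.

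Finally, applying the triangle inequality coordinatewise and taking $\mathbb{E}_\nu$ yields
\begin{equation*}
|\mathbb{E}_\mu f - \mathbb{E}_\nu f| \;\le\; \mathbb{E}_\nu\!\Bigl(\frac{1}{n}\sum_{i=1}^{n} |\Delta_i(h)| \cdot \|\mu_i(\cdot\,|\,x^{(\sim i)}) - \nu_i(\cdot\,|\,x^{(\sim i)})\|_{\mathsf{TV}}\Bigr),
\end{equation*}
which is the stated bound. No obstacle of real depth arises: the entire argument is essentially algebraic once one has the right viewpoint. The one conceptual step that needs care is the swap $\mathbb{E}_\nu h \mapsto \mathbb{E}_\nu Qh$, i.e.\ using stationarity of $\nu$ under $Q$ rather than under $P$; this is what converts the mismatch $h - Ph$ into the kernel difference $(Q-P)h$ and is the key Stein-style move. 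Existence of a solution $h$ to the Poisson equation is needed to justify the starting point but is presumably handled via the reversibility and irreducibility of Glauber dynamics in the preliminaries.
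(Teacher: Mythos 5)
Your proof is correct and follows the paper's argument essentially step for step: take $\mathbb{E}_\nu$ of the Poisson equation, invoke stationarity of $\nu$ under its own Glauber kernel $Q$ to obtain $\mathbb{E}_\nu f - \mathbb{E}_\mu f = \mathbb{E}_\nu[(Q-P)h]$, unfold the tensorized Glauber structure to rewrite $(Q-P)h$ coordinatewise as $\tfrac{1}{n}\sum_i \Delta_i(h)\,(\nu_i(1|x^{(\sim i)})-\mu_i(1|x^{(\sim i)}))$, then identify the two-point total variation and apply the triangle inequality. This matches the paper's proof of Theorem~\ref{main_theorem}.
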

	If $P$ is contractive and $f$ is Lipschitz, then we get a simplified bound, given in Theorem~\ref{main_theorem}. Aside from applying the technique to prove Theorem~\ref{main_application} on approximation of Ising moments, we state a result in Subsection~\ref{ss:Dobrushin} comparing functionals of an Ising model with a perturbed Ising model when one of them has sufficiently weak interactions (specifically, we require a condition similar to, though slightly weaker than, Dobrushin's uniqueness condition). 

\begin{remark}
The same result as stated in Theorem~\ref{t:main_short}, with a similar proof, was discovered independently in~\cite{reinert2017}. Their main application is to compare exponential random graphs with Erd\H{o}s-R\'enyi random graphs, whereas we use it to compare Ising models to the Curie-Weiss model. For added transparency we have coordinated the submissions of our two papers.
\end{remark}

We briefly outline the rest of the paper. Section 2 reviews Stein's method, the Poisson equation, Glauber dynamics, and motivates our technique. Section 3 states and proves the main abstract result.  Section~\ref{s:IsingApproximation} contains the application to Ising models with weak interactions and  our result on approximation of moments of the Curie-Weiss model by those of Ising models on expanders. The proof of the former is in Section~\ref{s:contractProof} and of the latter in Sections~\ref{s:ProofIdeas} and~\ref{s:IsingProof}.  
  
We remark that several papers consider the problem of \emph{testing} various properties of an Ising model from samples, such as whether the variables are jointly independent, equal to a known Ising model, etc. \cite{daskalakis2016testing,daskalakis2017concentration,gheissari2017concentration}. The problem of testing between dense and sparse Ising models is studied in \cite{bresler2018optimal}. 

\section{Preliminaries}
\label{s:prelim}
\subsection{Stein's Method}
Stein's method was first introduced by Charles Stein in his famous paper \cite{stein1972bound} to prove distributional convergence of sums of random variables to a normal random variable even in the presence of dependence. The method gives explicit Berry-Esseen-type bounds for various probability metrics. The method has since been used to prove distributional convergence to a number of distributions including the Poisson distribution \cite{chen1975poisson}, the exponential distribution \cite{chatterjee2011exponential,Fulman2013} and $\beta$ distribution \cite{goldstein2013stein,dobler2015stein}. See~\cite{ross2011fundamentals} for a survey of Stein's method; we give a brief sketch. 

Consider a sequence of random variables $Y_n$ and a random variable $X$. Stein's method is a way prove distributional convergence of $Y_n$ to $X$ with explicit upper bounds on an appropriate probability metric (Kolmogorov-Smirnov, total variation, Wasserstein, etc.). This involves the following steps:
\begin{enumerate}
\item
Find a characterizing operator $\mathcal{A}$ for the distribution of $X$, which maps functions $h$ over the state space of $X$ to give another function $\mathcal{A}h$ such that $$\mathbb{E}[ \mathcal{A}(h)(X)] = 0 \,.$$ 
Additionally, if $\mathbb{E}\mathcal{A}(h)(Y) = 0$ for a large enough class of functions $h$,
then $Y \stackrel{d}{=} X$.  Therefore the operator $\mathcal{A}$ is called a `characterizing operator'.
\item
For an appropriate class of functions $\mathcal{F}$ (depending on the desired probability metric), one solves the Stein equation
$$\mathcal{A}h = f - \mathbb{E}f(X)$$ for arbitrary $f \in \mathcal{F}$.
\item
By bounding $|\mathbb{E}f(Y_n) - \mathbb{E}f(X)|$ in terms of $\mathbb{E}\mathcal{A}(h)(Y_n)$, which is shown to be tending to zero, it follows that $Y_n \stackrel{d}{\to} X$.
\end{enumerate}

 The procedure above is often carried out via the method of exchangeable pairs (as done in Stein's original paper \cite{stein1972bound}; see also the survey by \cite{ross2011fundamentals} for details). An exchangeable pair $(Y_n, Y_n^{\prime})$ is constructed such that $Y_n^{\prime}$ is a small perturbation from $Y_n$ (which can be a step in some reversible Markov chain). Bounding the distance between $X$ and $Y_n$ then typically reduces to bounding how far $Y_n^{\prime}$ is from $Y_n$ in expectation.
Since reversible Markov chains naturally give characterizing operators as well as `small perturbations', we formulate our problem along these lines.

\subsection{Markov Chains and the Poisson Equation}
In this paper, we only deal with finite state reversible and irreducible Markov Chains. Basic definitions and methods can be found in \cite{levin2009markov} and \cite{aldous2000reversible}. Henceforth, we use the notation in \cite{levin2009markov} for our exposition on Markov chains.  Let $P$ be an irreducible Markov kernel and $\mu$ be its unique stationary distribution. We denote by $\mathbb{E}_\mu$  the expectation with respect to the measure $\mu$. It will be convenient to use functional analytic notation in tandem with probability theoretic notation for expectation, for instance replacing $\mathbb{E}g(X)$ for a variable $X\sim 
\mu$ by $\mathbb{E}_\mu g$.

Given a function $f : \Omega \to \mathbb{R}$, we consider the following equation called the Poisson equation: 
\begin{equation}
h - Ph = f - \mathbb{E}_{\mu}f \,.
\label{poisson_equation}
\end{equation}
 By definition of stationary distribution, $\mathbb{E}_{\mu}(h -Ph) = 0$. By uniqueness of the stationary distribution, it is clear that for any probability distribution $\eta$ over the same state space as $\mu$, $\mathbb{E}_{\eta}(h -Ph) = 0$ for all $h$ only if $\mu = \eta$. Therefore, we will use Equation~\eqref{poisson_equation} as the Stein equation and the operator $I-P$ as the characterizing operator for $\mu$. The Poisson equation was used in \cite{chatterjee2005concentration} to show sub-Gaussian concentration of Lipschitz functions of weakly dependent random variables using a variant of Stein's method.

For the finite state, irreducible Markov chains we consider, solutions can be easily shown to exist in the following way:
The Markov kernel $P$ can be written as a finite stochastic matrix and functions over the state space as column vectors. We denote the pseudo-inverse of the matrix $I-P$ by $(I-P)^\dagger$, and one can verify that $h = (I-P)^\dagger(f-\mathbb{E}_{\mu}f)$ is a solution to~\eqref{poisson_equation}. The solution to the Poisson equation is not unique: if $h(x)$ is a solution, then so is $h(x)+a$ for any $a \in \mathbb{R}$. We refer to the review article by Makowski and Schwartz in \cite{feinberg2012handbook} and references therein for material on solution to the Poisson equation on finite state spaces.

We call the solution $h$ given in the following lemma the \emph{principal solution} of the Poisson equation. See \cite{feinberg2012handbook} for the proof.
\begin{lemma}\label{principal_solution}
  Let the sequence of random variables $(X_i)_{i=0}^{\infty}$ be a Markov chain with transition kernel $P$. Suppose that $P$ is a finite state irreducible Markov kernel with stationary distribution~$\mu$. Then the Poisson equation~\eqref{poisson_equation} has the following solution:
$$h(x) = \sum_{t=0}^{\infty} \mathbb{E}\left[f(X_t) - \mathbb{E}_{\mu}f \mid X_0 = x\right] \ .$$
\end{lemma}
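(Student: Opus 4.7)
My plan is to verify the identity by direct substitution, reducing the Poisson equation to a telescoping identity via the Markov property. The only genuine issue is convergence of the defining series.

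First, I would establish that the sum defining $h(x)$ converges absolutely and uniformly in $x$. For a finite irreducible (and aperiodic) Markov kernel $P$ on $\Omega$, a standard application of the Perron--Frobenius theorem yields constants $C > 0$ and $\rho \in (0,1)$ such that
$$\bigl|\mathbb{E}[f(X_t) \mid X_0 = x] - \mathbb{E}_\mu f\bigr| \leq C\|f\|_\infty \rho^t$$
for all $x \in \Omega$ and all $t \geq 0$. This makes the series defining $h$ absolutely convergent, and in particular $h$ is a well-defined bounded function on $\Omega$. For the Glauber dynamics used throughout the paper aperiodicity is automatic, since at each step there is positive probability that the resampled coordinate does not change value, yielding a self-loop.

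Next, I would apply $P$ to $h$ term-by-term. Since $Ph(x) = \sum_{y \in \Omega} P(x,y) h(y)$ is a \emph{finite} linear combination, absolute convergence lets me exchange it with the sum over $t$. By the Markov property,
$$\sum_{y \in \Omega} P(x,y)\,\mathbb{E}\bigl[f(X_t) \mid X_0 = y\bigr] = \mathbb{E}\bigl[f(X_{t+1}) \mid X_0 = x\bigr],$$
so that
$$Ph(x) = \sum_{t=0}^{\infty} \mathbb{E}\bigl[f(X_{t+1}) - \mathbb{E}_\mu f \mid X_0 = x\bigr] = \sum_{s=1}^{\infty} \mathbb{E}\bigl[f(X_s) - \mathbb{E}_\mu f \mid X_0 = x\bigr].$$
This is just a shift of the index in the series defining $h$. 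Subtracting, the telescope collapses to the $t=0$ term and I get
$$h(x) - Ph(x) = \mathbb{E}\bigl[f(X_0) - \mathbb{E}_\mu f \mid X_0 = x\bigr] = f(x) - \mathbb{E}_\mu f,$$
which is exactly \eqref{poisson_equation}.

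The main obstacle, modest as it is, is the convergence step: without aperiodicity the per-term differences $\mathbb{E}[f(X_t)\mid X_0 = x] - \mathbb{E}_\mu f$ need not decay, and one must instead pass to the lazy chain $\tfrac{1}{2}(I+P)$ or interpret the sum in Cesaro sense. In the present setting this is a non-issue, so the proof is essentially a one-line substitution once $h$ is seen to be well-defined.
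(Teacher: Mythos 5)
Your proof is correct, and it is the standard textbook argument for this fact; the paper itself gives no proof, deferring instead to the Makowski--Schwartz survey in \cite{feinberg2012handbook}, where essentially this same verification (geometric convergence from Perron--Frobenius plus a telescoping/Markov-property substitution) appears. You are also right to flag the gap between the lemma's stated hypothesis (irreducibility only) and what the series representation actually requires: without aperiodicity the terms $\mathbb{E}[f(X_t)\mid X_0=x]-\mathbb{E}_\mu f$ oscillate and the sum diverges in the ordinary sense, so one must either add aperiodicity, pass to the lazy kernel, or read the sum in the Abel/Cesaro sense. In the present paper this is harmless because, as noted in Subsection~\ref{ss:glauber}, every Glauber chain considered is aperiodic, and moreover Lemma~\ref{symmetric_solution} and the contraction arguments only ever invoke the principal solution for such chains. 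Your write-up is a clean, self-contained substitute for the cited reference.
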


\subsection{Glauber Dynamics and Contracting Markov Chains}\label{ss:glauber}
Given $x \in \Omega= \{-1,+1\}^n$, let $x^{(\sim  i)}$ be the values of $x$ except at the $i$th coordinate. For any probability measure $p(\cdot)$ over $\Omega$ such that $p(x^{(\sim i)}) > 0$, we let $p_i(\cdot|x^{(\sim i)})$ denote the conditional distribution of the $i$th coordinate given the rest to be $x^{(\sim i)}$. We also denote by $x^{(i,+)}$ (and $x^{(i,-)}$) the vectors obtained by setting the $i$th coordinate of $x$ to be $1$ (and $-1$). For any real-valued function $f$ over $\Omega$, denote the discrete derivative over the $i$th coordinate by $\Delta_i(f) := f\left(x^{(i,+)}\right) - f\left(x^{(i,-)}\right)$.

Given a probability measure $p$ over a product space $\mathcal{X}^n$, the \emph{Glauber Dynamics} generated by $p(\cdot)$ is the following Markov chain:
\begin{enumerate}
\item Given current state $X \in \mathcal{X}^n$, pick $I \in [n]$ uniformly and independently. 
\item Pick the new state $X^{\prime}$ such that $(X^{\prime})^i = X^i$ for all $i\neq I$. 
\item  The $I$th coordinate $(X^{\prime})^I$ is obtained by resampling according to the conditional distribution $p_I(\cdot|X^{(\sim I)})$.
\end{enumerate}
All the Glauber dynamics chains considered  in this paper are irreducible, aperiodic, reversible and have the generating distribution as the unique stationary distribution.


Denote the Hamming distance by $d_H(x,y) = \sum_{i=1}^n \ind_{x^i \neq y^i}$. Consider two Markov chains $(X_t)$ and $(Y_t)$ evolving according to the same Markov transition kernel $P$ and with different initial distributions. Let $\alpha \in [0,1)$. We call the Markov kernel $P$ $\alpha$-contractive (with respect to the Hamming metric) if there exists a coupling between the chains such that $\mathbb{E}[d_H(X_{t},Y_{t})|X_0 =x,Y_0 =y] \leq \alpha^t d_H(x,y)$ for all $t \in \mathbb{N}$.

\section{The Abstract Result}\label{s:abstractResult}

%

Given two real-valued random variables $W_1$ and $W_2$, the $1$-Wasserstein distance between their distributions is defined as
$$d_W(W_1,W_2) = \sup_{g \in 1\text{-Lip}} \mathbb{E}g(W_1) - \mathbb{E}g(W_2)\,.$$
Here the supremum is over 1-Lipschitz functions $g:\mathbb{R} \to \mathbb{R}$.

\begin{theorem}[The abstract result]\label{main_theorem}
Let $\mu$ and $\nu$ be probability measures on $\Omega = \{-1,+1\}^n$ with Glauber dynamics kernels $P$ and $Q$, respectively. Additionally, let $P$ be irreducible. Let $f : \Omega \to \mathbb{R}$ be any function and let $h$ be a solution to the Poisson equation~\eqref{poisson_equation}. Then
\begin{equation}
|\mathbb{E}_\mu f - \mathbb{E}_\nu f| \leq 
\mathbb{E}_{\nu}\Big(\frac{1}{n}\sum_{i=1}^{n} |\Delta_i(h)| \cdot \|\mu_i(\cdot|x^{(\sim i)})-\nu_i(\cdot|x^{(\sim i)})\|_{\mathsf{TV}}\Big)\,.
\label{main_bound}
\end{equation}
Furthermore, if $P$ is $\alpha$-contractive and the function $f$ is $L$-Lipschitz with respect to the Hamming metric, then
\begin{equation}
|\mathbb{E}_\mu f - \mathbb{E}_\nu f| \leq \frac{L}{(1-\alpha)}\mathbb{E}_{\nu}\Big(\frac{1}{n}\sum_{i=1}^{n}  \|\mu_i(\cdot|x^{(\sim i)})-\nu_i(\cdot|x^{(\sim i)})\|_{\mathsf{TV}}\Big)\,.
\label{contractive_chain_bound}
\end{equation}
If $Z_\mu \sim \mu$ and $Z_\nu \sim \nu$, then
\begin{equation}
d_W(f(Z_\mu),f(Z_\nu)) \leq \frac{L}{(1-\alpha)}\mathbb{E}_{\nu}\Big(\frac{1}{n}\sum_{i=1}^{n}  \|\mu_i(\cdot|x^{(\sim i)})-\nu_i(\cdot|x^{(\sim i)})\|_{\mathsf{TV}}\Big)\,.
\label{contractive_chain_wasserstein_bound}
\end{equation}
\end{theorem}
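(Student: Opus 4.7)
The plan is to start from the Poisson equation and exploit that $\nu$ is the stationary distribution of $Q$. Rewriting the equation as $f - \mathbb{E}_\mu f = h - Ph$ and taking the expectation under $\nu$ gives
$\mathbb{E}_\nu f - \mathbb{E}_\mu f = \mathbb{E}_\nu h - \mathbb{E}_\nu(Ph)$. Since $\nu$ is stationary for $Q$, we have $\mathbb{E}_\nu h = \mathbb{E}_\nu(Qh)$, so the difference collapses to $\mathbb{E}_\nu(Qh - Ph)$. This reduces the problem to a pointwise comparison of the two Glauber kernels acting on the same function $h$.

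Next I would bound $|Ph(x) - Qh(x)|$ pointwise. By definition of the Glauber dynamics, $Ph(x) - Qh(x) = \frac{1}{n}\sum_i \sum_{s \in \{-1,+1\}} \big(\mu_i(s\mid x^{(\sim i)}) - \nu_i(s\mid x^{(\sim i)})\big)\, h(x^{(\sim i)}, s)$. Writing $p_i = \mu_i(+1\mid x^{(\sim i)})$ and $q_i = \nu_i(+1\mid x^{(\sim i)})$, the inner sum telescopes to $(p_i - q_i)(h(x^{(i,+)}) - h(x^{(i,-)})) = (p_i - q_i) \Delta_i(h)$, and $|p_i - q_i|$ is exactly the TV distance between the two conditionals. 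Taking absolute values, averaging over $i$, and then expectation under $\nu$ yields the first bound~\eqref{main_bound}.

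For the contractive case I would invoke the principal solution from Lemma~\ref{principal_solution}, namely $h(x) = \sum_{t \geq 0} \mathbb{E}[f(X_t) - \mathbb{E}_\mu f \mid X_0 = x]$, and control $|\Delta_i(h)|$ term by term. For each $t$, couple two copies of the chain started at $x^{(i,+)}$ and $x^{(i,-)}$ using the coupling witnessing $\alpha$-contractivity; then the $L$-Lipschitz property of $f$ gives $|\mathbb{E}[f(X_t)\mid X_0 = x^{(i,+)}] - \mathbb{E}[f(X_t)\mid X_0 = x^{(i,-)}]| \leq L\, \mathbb{E}[d_H(X_t,Y_t)] \leq L \alpha^t d_H(x^{(i,+)}, x^{(i,-)}) = L \alpha^t$. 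Summing the geometric series produces $|\Delta_i(h)| \leq L/(1-\alpha)$, which plugged into~\eqref{main_bound} gives~\eqref{contractive_chain_bound}. The Wasserstein bound~\eqref{contractive_chain_wasserstein_bound} then follows immediately: for any $1$-Lipschitz $g : \mathbb{R} \to \mathbb{R}$, the composition $g \circ f$ is $L$-Lipschitz with respect to the Hamming metric, so~\eqref{contractive_chain_bound} applied to $g \circ f$ gives a bound uniform in $g$, and taking the supremum yields~\eqref{contractive_chain_wasserstein_bound}.

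The main obstacle I anticipate is purely notational/bookkeeping: the telescoping identity that turns $\sum_s(\mu_i - \nu_i)(s)\,h(\cdot,s)$ into $(p_i - q_i)\Delta_i(h)$ is specific to binary coordinates, and one has to be careful that the factor of $\tfrac{1}{n}$ from the uniform site choice in the Glauber dynamics is tracked correctly through both steps. Once those bookkeeping points are handled, the substantive estimate — bounding the discrete derivative of the principal solution via one-step coupling and geometric summation — is routine given the $\alpha$-contractivity hypothesis.
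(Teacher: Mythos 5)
Your proof is correct and follows essentially the same path as the paper: take expectation of the Poisson equation under $\nu$, use stationarity under $Q$, telescope the binary sum to get $\Delta_i(h)$ times the TV distance of the conditionals, and then bound $|\Delta_i(h)|$ via the principal-solution representation and the contractive coupling. Your Wasserstein step is a minor simplification over the paper's (observing $g\circ f$ is $L$-Lipschitz and applying \eqref{contractive_chain_bound} directly, rather than re-running the coupling argument for $h_g$), but it is the same idea and gives the identical bound.
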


\begin{proof}
To begin, since $\nu$ is stationary for $Q$, $\mathbb{E}_\nu h = \mathbb{E}_\nu Qh$. Taking expectation with respect to $\nu$ in~\eqref{poisson_equation}, we get
\begin{equation}
\mathbb{E}_\nu (Q-P)h = \mathbb{E}_\nu f - \mathbb{E}_\mu f\,.
\label{poisson_expectation}
\end{equation}
By definition of the Glauber dynamics, 
\begin{align}
(Q-P)h &= \frac{1}{n}\sum_{i=1}^{n} \Big(h(x^{(i,+)})\nu_i(1|x^{(\sim i)}) + h(x^{(i,-)})\nu_i(-1|x^{(\sim i)})\nonumber \\ &\qquad \qquad \quad - h(x^{(i,+)})\mu_i(1|x^{(\sim i)}) - h(x^{(i,-)})\mu_i(-1|x^{(\sim i)})\Big) \nonumber\\
&= \frac{1}{n}\sum_{i=1}^{n} \Delta_i(h)\big(\nu_i(1|x^{(\sim i)})  - \mu_i(1|x^{(\sim i)}) \big) \,.
\label{tensorisation_glauber}
\end{align}
Combining \eqref{poisson_expectation}
and \eqref{tensorisation_glauber}, along with the triangle inequality, yields~\eqref{main_bound}.

To prove~\eqref{contractive_chain_bound}, it is sufficient to show that if $f$ is $L$-Lipschitz and $P$ is $\alpha$-contractive, then $\Delta_i(h) \leq \frac{L}{1-\alpha}$. This we achieve using Lemma~\ref{principal_solution}. Let $(X_t)$, $(Y_t)$ be Markov chains evolving with respect to the kernel $P$, coupled such that they are $\alpha$-contractive. Then,
\begin{align*}
|\Delta_i(h)(x)| &=  \biggr|\sum_{t=0}^{\infty} \mathbb{E}\left[f(X_t) - f(Y_t) \mid X_0 = x^{(i,+)}, Y_0 = x^{(i,-)}\right] \biggr|\\
&\leq \sum_{t=0}^{\infty} \mathbb{E}\left[L d_H(X_t,Y_t) \mid X_0 = x^{(i,+)}, Y_0 = x^{(i,-)}\right] \\
&\leq L\sum_{t=0}^{\infty} \alpha^t \\
&= \frac{L}{1-\alpha}\,.
\end{align*}

Let $g: \mathbb{R} \to \mathbb{R}$ be any 1-Lipschitz function. Let $h_g$ be the solution to the Poisson equation $h_g - Ph_g = g\circ f - \mathbb{E}_\mu (g\circ f)$.
To prove Equation~\eqref{contractive_chain_wasserstein_bound}, it is sufficient (by definition of Wasserstein distance) to show that for any 1-Lipschitz function $g$, $\Delta_i(h_g) \leq \frac{L}{1-\alpha}$. By Lemma~\ref{principal_solution},
\begin{align*}
|\Delta_i(h_g)(x)| &=  \biggr|\sum_{t=0}^{\infty} \mathbb{E}\left[g\circ f(X_t) - g\circ f(Y_t) \mid X_0 = x^{(i,+)}, Y_0 = x^{(i,-)}\right] \biggr|\\
&\leq \sum_{t=0}^{\infty} \mathbb{E}\left[|f(X_t) - f(Y_t)| \mid X_0 = x^{(i,+)}, Y_0 = x^{(i,-)}\right]\\
&\leq \sum_{t=0}^{\infty} \mathbb{E}\left[L d_H(X_t,Y_t) \mid X_0 = x^{(i,+)}, Y_0 = x^{(i,-)}\right] \,.
\end{align*}
The bound from the previous display now gives the result.
\end{proof}

Roughly speaking, according to Theorem~\ref{main_theorem}, if $\frac{1}{n}\sum_{i=1}^{n}\|\mu_i(\cdot|x^{(\sim i)})-\nu_i(\cdot|x^{(\sim i)})\|_{\mathsf{TV}}$ is small and $\Delta_i(h)$ is not too large, then $\mathbb{E}_\mu f \approx \mathbb{E}_\nu f$. The quantity $\Delta_i(h)$ is assured to be small if $f$ is Lipschitz and the chain is contractive, and this gives us a bound on the Wasserstein distance. In our main application we deal with chains which are not contractive everywhere and we use the stronger bound~\eqref{main_bound} to obtain results similar to~\eqref{contractive_chain_bound} and~\eqref{contractive_chain_wasserstein_bound}.

\section{Ising Model and Approximation Results}
\label{s:IsingApproximation}
\subsection{Ising model}
\label{subsec:ising_model_def}
We now consider the Ising model. The \emph{interaction matrix} $J$ is a real-valued symmetric $n \times n$ matrix with zeros on the diagonal. Define the Hamiltonian $\mathcal{H}_J: \{-1,1\}^n \to \mathbb{R}$ by $$\mathcal{H}_J(x) = \frac{1}{2}x^{\intercal}Jx\,.$$ Construct the graph $G_J = ([n],E_J)$ with $(i,j) \in E$ iff $J_{ij} \neq 0$.  An Ising model over graph $G_J$ with interaction matrix $J$ is the probability measure $\pi$ over $\{-1,1\}^n$ such that $\pi(x) \propto \exp{(H_J(x))}$. We call the Ising model ferromagnetic if $J_{ij} \geq 0$ for all $i,j$.

For any simple graph $G = ([n],E)$ there is associated a symmetric $n\times n$ adjacency matrix $\mathcal{A}(G) = (\mathcal{A}_{ij})$, where
\begin{equation*}
\mathcal{A}_{ij} = 
\begin{cases}
1 &\text{if  $(i,j) \in E$} \\
0 &\text{otherwise}\,.
\end{cases}
\end{equation*}
Let $K_n$ be the complete graph over $n$ nodes; we will use $A$ to denote its adjacency matrix.
The \emph{Curie-Weiss model} at inverse temperature $\beta > 0$ is an Ising model with interaction matrix $\frac{\beta}{n}A$. It is known that the Curie-Weiss model undergoes phase transition at $\beta = 1$~\cite{ellis2007entropy}. 
We henceforth denote by $\mu$ the Curie-Weiss model at inverse temperature $\beta$. 

We will compare Ising models on the complete graph to those on a $d$-regular graph $G_d = ([n],E_d)$ (i.e., every node has degree $d$). Let $B$ denote the adjacency matrix of $G_d$.  Given inverse temperature $\beta$, we take $\nu$ to be the Ising model with interaction matrix $\frac{\beta}{d}B$.

\subsection{Expander Graphs}

We recall that $A$ is set to be the adjacency matrix of $K_n$. The all-ones vector $\mathbf{1}=[1,1,\dots,1]^{\intercal}$ is an eigenvector of $A$ with eigenvalue $n-1$. It is also an eigenvector of $B$ with eigenvalue $d$. $B$ has the following spectral decomposition with vectors $v_i$ being mutually orthogonal and orthogonal to $\mathbf{1}$:
\begin{equation}
B = \frac{d}{n}\mathbf{1}\mathbf{1}^{\intercal} + \sum_{i=2}^{n}\lambda_i v_iv_i^{\intercal}\,.
\label{spectrum_1}
\end{equation}
Because of the degeneracy of the eigenspaces of $A$, we can write:
\begin{equation}
A = \frac{n-1}{n}\mathbf{1}\mathbf{1}^{\intercal} + \sum_{i=2}^{n}v_iv_i^{\intercal}\,.
\label{spectrum_2}
\end{equation}

Let $\epsilon \in (0,1)$. We call the graph $G_d$ an $\epsilon$-expander if the eigenvalues $\lambda_2,\dots,\lambda_n$ of its adjacency matrix $B$ satisfy $|\lambda_i| \leq \epsilon d$. Henceforth, we assume that $G_d$ is an $\epsilon$-expander. Then, from~\eqref{spectrum_1} and~\eqref{spectrum_2} we conclude that
\begin{equation}
\|\tfrac{\beta}{n}A-\tfrac{\beta}{d}B\|_2 \leq \beta (\epsilon + \tfrac{1}{n})\,.
\label{norm_bound}
\end{equation}
Expanders have been extensively studied and used in a variety of applications. There are numerous explicit constructions for expander graphs. A famous result by Alon and Boppana \cite{nilli1991second} shows that $\epsilon \geq  2\frac{\sqrt{d-1}}{d}$ for any $d$-regular graph. A family of $d$-regular graphs with increasing number of nodes is called \emph{Ramanujan} if $\epsilon$ approaches $2\frac{\sqrt{d-1}}{d}$ asymptotically. A $d$-regular graph over $n$ nodes is said to be $\delta$-approximately Ramanujan if $\epsilon = 2\frac{\sqrt{d-1} + \delta}{d}$. \cite{friedman2008proof} shows that for every $\delta > 0$, a random $d$-regular graph is $\delta$-approximately Ramanujan with probability tending to $1$ as $n \to \infty$.

Our main result in Subsection~\ref{ss:mainIsingMoments} is a bound on the difference of low-order moments of $\mu$ and $\nu$. 
Before discussing this, we warm up by applying our method to Ising models in the contracting regime.

\subsection{Approximation of Ising Models under Dobrushin-like Condition} 
\label{ss:Dobrushin}

In Theorem~\ref{t:contractIsing} below, we use the fact that Ising models contract when the interactions are weak enough to prove bounds on the Wasserstein distance between functionals of two Ising models.
Given $x,y \in \Omega$, let $\Delta_{x,y}$ denote the column vector with elements $\frac{1}{2}|x_i - y_i| = \ind_{x_i \neq y_i}$. Let $|L|$ be the matrix with entries $(|L|)_{ij} = |L_{ij}|$ equal to the absolute values of entries of $L$. The Ising model with interaction matrix $L$ is then said to satisfy the \emph{Dobrushin-like condition} if $\|(|L|)\|_2 <1$. Essentially the same condition was used in \cite{hayes2006simple} and \cite{chatterjee2005concentration}.
This contrasts with the classical Dobrushin condition, which requires that $\|(|L|)\|_{\infty} < 1$~\cite{dobrushin1970prescribing,weitz2005combinatorial,georgii2011gibbs}. In both the Curie-Weiss model with interaction matrix $\frac \beta n A$ and the Ising model on $d$-regular graph with interaction matrix $\frac \beta d B$, the Dobrushin-like condition as well as the classical Dobrushin condition are satisfied if and only if $\beta <1$.

\begin{remark}

We state these conditions in terms of the Ising interaction matrix, but in general they use the so-called dependence matrix. We briefly describe the connection. Given a measure $\pi$ over $\Omega$, the matrix $D = (d_{ij})$ is a dependence matrix for $\pi$ if for all $x,y \in \Omega$,
$
\|\pi_i(\cdot| x^{(\sim i)}) - \pi_i(\cdot| y^{(\sim i)})\|_{\mathsf{TV}} \leq \sum_{j=1}^{n} d_{ij}\ind_{x^i \neq y^i}\,.
$
The measure $\pi$ satisfies the Dobrushin condition with dependence matrix $D$ if $\|D\|_\infty < 1$. If $\pi$ is an Ising model with interaction matrix $J$, then $\pi_i(x_i =1|x^{(\sim i)}) = \frac12(1+\tanh{J_i^{\intercal}x})$ (here $J_i^{\intercal}$ is the $i$th row of $J$). Therefore,
$
\|\pi_i(\cdot| x^{(\sim i)}) - \pi_i(\cdot| y^{(\sim i)})\|_{\mathsf{TV}} = \frac{1}{2} |\tanh{J_i^{\intercal}x} - \tanh{J_i^{\intercal}y}| 
\leq  \sum_{j=1}^{n} |J_{ij}| \ind_{x^i \neq y^i}
$
and we can consider $|J|$ as the dependence matrix. \end{remark}

For $a\in (\mathbb{R}^+)^n$, let $f: \Omega \to \mathbb{R}$ be any function such that $\forall \ x,y \in \Omega$, 
$$
|f(x) - f(y)| \leq \sum_{i=1}^n a_i \ind_{x_i \neq y_i} = a^{\intercal}\Delta_{x,y}\,.
$$
We call such a function $f$ an $a$-Lipschitz function.

\begin{theorem}\label{t:contractIsing}
	Let $a \in (\mathbb{R}^{+})^n$ and let $f:\Omega \to \mathbb{R}$ be an $a$-Lipschitz function. If an interaction matrix $L$ (with corresponding Ising measure $\pi_L$) satisfies the Dobrushin-like condition, then for any other interaction matrix $M$ (with corresponding Ising measure $\pi_M$), 
	$$|\mathbb{E}_{\pi_L} f - \mathbb{E}_{\pi_M} f| 
	\leq \frac{\|a\|_2 \sqrt{n}}{2(1-\|(|L|)\|_2)} \|L-M\|_2\,.$$
\end{theorem}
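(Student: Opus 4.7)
The plan is to apply inequality~\eqref{main_bound} from Theorem~\ref{main_theorem} with $\mu = \pi_L$ and $\nu = \pi_M$, and to take $h$ to be the principal solution (from Lemma~\ref{principal_solution}) to the Poisson equation for the Glauber kernel $P$ of $\pi_L$. This yields
\[
|\E_{\pi_L} f - \E_{\pi_M} f| \leq \E_{\pi_M}\!\Big[\tfrac{1}{n}\sum_{i} |\Delta_i(h)|\cdot g_i(x)\Big],
\]
where $g_i(x) := \|\pi_{L,i}(\cdot|x^{(\sim i)}) - \pi_{M,i}(\cdot|x^{(\sim i)})\|_{\mathsf{TV}}$. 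I will express the vector $(|\Delta_i(h)|)_{i}$ in terms of $a$ and $|L|$, the vector $g(x)$ in terms of $L-M$ and $x$, and then combine the two via Cauchy--Schwarz.

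First I would bound $|\Delta_i(h)(x)|$ by a path-coupling argument. From Lemma~\ref{principal_solution} together with the $a$-Lipschitz property,
\[
|\Delta_i(h)(x)| \leq a^{\intercal}\sum_{t\geq 0} m_t, \qquad m_t := \E\bigl[\Delta_{X_t,Y_t}\bigm| X_0=x^{(i,+)},\, Y_0=x^{(i,-)}\bigr],
\]
where the two $\pi_L$-Glauber chains are coupled by choosing the same coordinate $I\in[n]$ each step and then performing an optimal coupling of the two conditional updates. The $\tanh$ identity from the remark preceding the statement gives $\|\pi_{L,j}(\cdot|x^{(\sim j)}) - \pi_{L,j}(\cdot|y^{(\sim j)})\|_{\mathsf{TV}} \leq (|L|\,\Delta_{x,y})_j$, which leads to the componentwise recursion
\[
m_{t+1} \leq T\, m_t, \qquad T := \bigl(1-\tfrac{1}{n}\bigr) I + \tfrac{1}{n}|L|.
\]
Since $T$ has nonnegative entries and $\||L|\|_2 < 1$, iterating from $m_0 = e_i$ and summing the resulting geometric series gives $\sum_{t\geq 0} m_t \leq (I-T)^{-1}e_i = n(I-|L|)^{-1} e_i$, and hence $|\Delta_i(h)(x)| \leq n\, a^{\intercal} (I-|L|)^{-1} e_i$; crucially, this bound is deterministic in $x$.

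Finally, the same $\tanh$ Lipschitz estimate applied across the two interaction matrices gives $g_i(x) \leq \tfrac{1}{2}|((L-M)x)_i|$, so $\|g(x)\|_2 \leq \tfrac{1}{2}\|L-M\|_2 \|x\|_2 = \tfrac{\sqrt n}{2}\|L-M\|_2$ since $\|x\|_2 = \sqrt n$. Substituting both bounds into~\eqref{main_bound} and using $\|(I-|L|)^{-1}\|_2 = (1-\||L|\|_2)^{-1}$ (valid because $|L|$ is symmetric with nonnegative entries and spectral radius strictly less than $1$), Cauchy--Schwarz yields
\[
\tfrac{1}{n}\sum_i |\Delta_i(h)(x)|\, g_i(x) \leq a^{\intercal} (I-|L|)^{-1} g(x) \leq \tfrac{\|a\|_2}{1-\||L|\|_2} \cdot \tfrac{\sqrt n}{2}\|L-M\|_2,
\]
which is the claimed bound after taking $\E_{\pi_M}$ of a deterministic quantity. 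The hard part will be cleanly establishing the componentwise recursion $m_{t+1} \leq T m_t$ and iterating it: the argument relies on $T$ having nonnegative entries so that left-multiplication by $T$ preserves the inequality, while the Dobrushin-like hypothesis $\||L|\|_2 < 1$ is what simultaneously makes the series $\sum_t T^t$ converge and guarantees that $I-|L|$ is invertible.
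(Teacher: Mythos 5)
Your proposal is correct and follows essentially the same route as the paper's own proof: you use the monotone/optimal coordinate coupling to obtain the componentwise contraction $m_{t+1}\leq T m_t$ with $T=(1-\tfrac1n)I+\tfrac1n|L|$, sum the geometric series to bound $|\Delta_i(h)|$ by a row of $n\,a^{\intercal}(I-|L|)^{-1}$, and close via the $\tanh$-Lipschitz estimate and Cauchy--Schwarz (which is exactly the content of the paper's Lemma~\ref{spectral_bound}). The only cosmetic difference is that the paper phrases the recursion as $c^{\intercal}\Delta_{X_{t+1},Y_{t+1}}$ bounds for a positive vector $c$ and invokes the spectral lemma as a separate statement, whereas you carry the componentwise vector bound through directly; the two are algebraically identical, including the use of Perron--Frobenius to equate $\|(I-|L|)^{-1}\|_2$ with $(1-\||L|\|_2)^{-1}$.
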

The proof, given in Section~\ref{s:contractProof}, uses ideas from Section~4.2 in~\cite{chatterjee2005concentration}, which proves results on concentration of Lipschitz functions of weakly dependent random variables.

A simple consequence of this theorem is that when $\|(|L|)\|_2 < 1$, the Ising model is stable in the Wasserstein distance sense under small changes in inverse temperature. 

\begin{corollary}
Let $M = (1+ \epsilon)L$. Then, for any $a$-Lipschitz function,
$$ |\mathbb{E}_{\pi_L}f - \mathbb{E}_{\pi_M}f| \leq \epsilon\|a\|_2 \sqrt{n}\frac{\|L\|_2}{2(1-\|(|L|)\|_2)} \,.$$ 
\end{corollary}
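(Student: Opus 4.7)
The plan is to invoke Theorem~\ref{t:contractIsing} directly with the specific choice $M=(1+\epsilon)L$. The hypothesis of the corollary says that $\|(|L|)\|_2 < 1$, so $L$ satisfies the Dobrushin-like condition and thus serves as the privileged interaction matrix required by that theorem. The function $f$ is still $a$-Lipschitz, so all hypotheses are met and the conclusion applies verbatim.

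Next I would compute the spectral distance between the two interaction matrices: since $L - M = L - (1+\epsilon) L = -\epsilon L$, one has
\begin{equation*}
\|L - M\|_2 = \epsilon \|L\|_2.
\end{equation*}
Substituting this into the bound of Theorem~\ref{t:contractIsing} yields
\begin{equation*}
|\mathbb{E}_{\pi_L} f - \mathbb{E}_{\pi_M} f| \leq \frac{\|a\|_2 \sqrt{n}}{2(1-\|(|L|)\|_2)} \cdot \epsilon \|L\|_2,
\end{equation*}
which is exactly the claimed inequality after rearrangement. No iteration, coupling, or further Stein-method input is needed; the corollary is essentially a one-line specialization.

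There is no real obstacle here since the hypothesis of the previous theorem is placed on $L$ (which is unaltered), not on $M$; in particular we do not need $M$ itself to satisfy the Dobrushin-like condition, and $\epsilon$ may even be so large that $(1+\epsilon)L$ fails it. The only conceptual point worth stating in the write-up is that the roles in Theorem~\ref{t:contractIsing} are asymmetric: the contracting interaction matrix plays the role of $L$, and $M$ is the arbitrary perturbation. Other than that, the proof is a direct substitution.
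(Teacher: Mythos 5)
Your proof is correct and matches the paper's (implicit) argument: the corollary is stated as a direct consequence of Theorem~\ref{t:contractIsing}, obtained by substituting $M=(1+\epsilon)L$ and computing $\|L-M\|_2 = \epsilon\|L\|_2$, exactly as you do. Your remark that only $L$ (not $M$) needs to satisfy the Dobrushin-like condition is accurate and a worthwhile clarification.
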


If $f$ is $\tfrac{1}{n}$-Lipschitz in each coordinate 
then $\|a\|_2 = \frac{1}{\sqrt{n}}$ (typical statistics like magnetization fall into this category). We conclude that for such functions
$$ |\mathbb{E}_{\pi_L}f - \mathbb{E}_{\pi_M}f| \leq \frac{\epsilon\|L\|_2}{2(1-\|(|L|)\|_2)} \,.$$

 
 \subsection{Main Result on Approximation of Ising Model Moments}
 \label{ss:mainIsingMoments}
 
 Let $\rho_{ij} = \mathbb{E}_{\mu} x^ix^j$ and $\tilde{\rho}_{ij} = \mathbb{E}_{\nu} x^ix^j$ denote the pairwise correlations in the two Ising models $\mu$ and $\nu$. It follows from Griffith's inequality~\cite{griffiths1967correlations} for ferromagnetic Ising models that for any $i$ and $j$,
 $$ 0\leq \rho_{ij} \leq 1
 \quad \text{and}\quad 0 \leq \tilde{\rho_{ij}} \leq 1\,.$$
 If two Ising models have the same pairwise correlations for every $i,j \in [n]$, then they are identical. For an Ising model $\eta$ with interaction matrix $J$, it is also not hard to show that if there are no paths between nodes $i$ and $j$ in the graph $G_J$, then $x^i$ and $x^j$ are independent and  $\mathbb{E}_{\eta} [x^ix^j]  = 0$. We refer to \cite{wainwright2008graphical} for proofs of these statements. We conclude that ${{{n}\choose{2}}}\inv \sum_{ij} |\rho_{ij} - \tilde{\rho}_{ij}|$ defines a metric on the space of Ising models over $n$ nodes.
 
 For positive even integers $k$, we denote the $k$th order moments for $i_1,..,i_k \in [n]$ by
 $$\rho^{(k)}[i_1,\dots,i_k] = \mathbb{E}_\mu\Big(\prod_{s=1}^k x^{i_s}\Big)$$
 and similarly for $\tilde \rho^{(k)}[i_1,\dots,i_k]$, but with $\mu$ replaced by $\nu$.
For a set $R = \{i_1,\dots,i_k\}$, we write $\rho^{(k)}[R]$ in place of $\rho^{(k)}[i_1,\dots,i_k]$.
 (We consider only even $k$, since odd moments are zero for Ising models with no external field.)
 
 
 Using Theorem~\ref{main_theorem}, we show the following approximation result on nearness of moments of the Curie-Weiss model and those of the Ising model on a sequence of regular expanders.
 
 \begin{theorem}
 	\label{main_application}
 	Let $A$ be the adjacency matrix of the complete graph and let $B$ be the adjacency matrix of a $d$-regular $\epsilon$-expander, both on $n$ nodes. Let the inverse temperature $\beta > 1$ be fixed, and consider the Ising models with interaction matrices $\frac\beta n A$ and $\frac \beta d B$, with moments $\rho$ and $\tilde \rho$ as described above. There exist positive constants $\epsilon_0(\beta)$ and $C(\beta)$ depending only on $\beta$ such that if $\epsilon < \epsilon_0(\beta)$, then for any even positive integer $k < n$
 	$$\frac{1}{{{n}\choose{k}}} \sum_{\substack{R \in [n] \\ |R| = k}}\big|\rho^{(k)}[R] - \tilde{\rho}^{(k)}[R]\big| \leq kC(\beta)\Big(\epsilon + \frac{1}{n}\Big)\,.$$
 	In particular,
 	$$\frac{1}{{{n}\choose{2}}} \sum_{i j}|\rho_{ij} - \tilde{\rho}_{ij}| < 2C(\beta) \Big(\epsilon + \frac{1}{n}\Big)\,.$$
 \end{theorem}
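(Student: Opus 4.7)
The plan is to apply the abstract comparison inequality~\eqref{main_bound} to the family of test functions $f_R(x) = \prod_{s\in R} x^s$ indexed by subsets $R\subset[n]$ with $|R|=k$, taking $\mu$ to be the Curie-Weiss model and $\nu$ the expander Ising model. The principal obstacle is that at $\beta>1$ the Curie-Weiss Glauber dynamics mixes exponentially slowly due to a metastability bottleneck between the positive- and negative-magnetization phases, so the Poisson-equation solution $h_R$ cannot have uniformly bounded discrete derivatives and the clean contractive form~\eqref{contractive_chain_bound} of Theorem~\ref{main_theorem} is unavailable. To circumvent this I would follow~\cite{levin2010glauber} and work with the \emph{restricted} Glauber dynamics on the positive half $\Omega^+ = \{x:\sum_i x_i \geq 0\}$, whose stationary measure is $\mu^+ := \mu(\cdot\mid\Omega^+)$ and which contracts at some Hamming-rate $\alpha = \alpha(\beta) < 1$ independent of $n$.

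A preliminary symmetry reduction exploits that, since $k$ is even, $f_R$ is invariant under the global sign flip $x\mapsto -x$. Using the sign-flip symmetry of both Ising models, $\mathbb{E}_\mu f_R$ and $\mathbb{E}_\nu f_R$ coincide with $\mathbb{E}_{\mu^+} f_R$ and $\mathbb{E}_{\nu^+} f_R$ up to an exponentially small (in $n$) boundary correction; the hypothesis $\epsilon<\epsilon_0(\beta)$ is used here to guarantee that $\nu$ itself retains the two-phase bimodal structure. It therefore suffices to compare $\mu^+$ and $\nu^+$ via~\eqref{main_bound} applied with the restricted Glauber kernel $P^+$ of $\mu^+$ in the role of $P$.

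Two ingredients remain. First, I would bound $|\Delta_i h_R(x)|$ for the principal solution $h_R$ (Lemma~\ref{principal_solution}) using a monotone coupling of two copies of $P^+$ started from $x^{(i,+)}$ and $x^{(i,-)}$. The key point producing the factor $k$ \emph{after} averaging over $R$ is that $|f_R(X_t) - f_R(Y_t)|$ takes only the values $0$ or $2$, and equals $2$ only when $X_t$ and $Y_t$ disagree on at least one coordinate in $R$; averaging over a uniformly random $R$ of size $k$ thus yields $\tfrac{1}{\binom{n}{k}}\sum_R |f_R(X_t)-f_R(Y_t)| \leq 2k\,d_H(X_t,Y_t)/n$, which combined with the contractive estimate $\mathbb{E}\, d_H(X_t,Y_t) \leq \alpha^t$ delivers an average bound on $|\Delta_i h_R|$ of the required order. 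Second, for $x$ away from the magnetization boundary $\{\sum_j x_j\in\{0,1\}\}$ the restricted conditional $\mu^+_i(\cdot|x^{(\sim i)})$ coincides with the unrestricted Gibbs conditional, and $1$-Lipschitzness of $\tanh$ gives
\[
\|\mu^+_i(\cdot|x^{(\sim i)}) - \nu^+_i(\cdot|x^{(\sim i)})\|_{\mathsf{TV}} \leq \tfrac{1}{2}\bigl|\bigl((\tfrac{\beta}{n}A - \tfrac{\beta}{d}B)x\bigr)_i\bigr|;
\]
averaging over $i$ and applying Cauchy-Schwarz together with the spectral estimate~\eqref{norm_bound} yields an average conditional TV distance of order $\beta(\epsilon + 1/n)$.

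The hardest part, I expect, is the technical handling of two subtleties introduced by the restriction: (i)~the kernel $P^+$ rejects proposals that would leave $\Omega^+$, so the monotone Hamming coupling must be adjusted near the magnetization boundary in order to yield a contraction rate $\alpha(\beta)<1$ uniformly in the starting configuration, which is essentially the core content of~\cite{levin2010glauber}; and (ii)~at boundary configurations where the restriction pins a coordinate deterministically, the conditional TV distance above can be close to $1$, but since at $\beta>1$ both $\mu^+$ and (for $\epsilon<\epsilon_0(\beta)$) $\nu^+$ concentrate near the positive magnetization peak these configurations carry exponentially small probability and their contribution is absorbed into $C(\beta)$. Inserting these pieces into~\eqref{main_bound} and averaging over $R$ of size $k$ then yields the stated estimate $kC(\beta)(\epsilon + 1/n)$.
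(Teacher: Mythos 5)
Your overall architecture (sign-flip symmetry since $k$ is even, restricted dynamics to sidestep slow mixing at $\beta>1$, a spectral bound on the conditional total-variation term, a coupling bound on the discrete derivatives of $h$) matches the paper's strategy at a high level. However, your proposal hinges on two assertions that are false, and one structural choice that creates a problem the paper deliberately avoids.

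The fatal gap is the claim that the restricted Glauber dynamics on $\Omega^{+}$ ``contracts at some Hamming-rate $\alpha=\alpha(\beta)<1$ independent of $n$.'' No Glauber-type dynamics can contract in Hamming distance at a rate bounded away from $1$ independently of $n$, since a single step modifies one coordinate; the best attainable is $1-O(1/n)$. More importantly, even at rate $1-c/n$, the restricted dynamics is \emph{not} uniformly contractive: when the magnetization drops below $m^{*}$ toward zero, the function $g(s)=\tanh(\beta s)-s$ has positive slope and coupled chains drift apart rather than together. The mixing-time result of Levin--Luczak--Peres gives a coupling-time bound, not a uniform Hamming contraction, and the two are genuinely different. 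The paper's actual argument works around this with a case split: when $m(x_0)$ is above a threshold $s_2$, a monotone coupling yields a supermartingale estimate for the magnetization difference (Lemmas~\ref{negative_slope_properties}--\ref{super_martingale_contraction}), giving $|\Delta_i h(x_0)| = O(1/\gamma^{*})$; when $m(x_0)$ is below that threshold, only the crude $O(n\log n)$ bound of Lemma~\ref{naive_bound} (via the coupling time from~\cite{levin2010glauber}) holds, and Lemma~\ref{expander_magnetisation_concentration} is used to show $\nu$ assigns exponentially small mass to that regime. On top of that, the contracting case needs a hitting-time estimate (Lemma~\ref{lem:escape_time_bound}) to ensure the monotone coupling stays in the contracting region long enough. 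None of this delicate machinery is present in your sketch, and your simpler uniform-contraction route does not close.

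A secondary issue: you propose to apply inequality~\eqref{main_bound} directly with the restricted kernel $P^{+}$ in place of $P$, comparing $\mu^{+}$ to $\nu^{+}$. But the derivation of~\eqref{main_bound} (the identity~\eqref{tensorisation_glauber}) uses that $P$ is a bona fide single-site Glauber kernel, so that $(Q-P)h$ tensorizes into discrete derivatives weighted by conditional differences. The restricted kernel includes a global sign flip and is not of this form, so Theorem~\ref{main_theorem} does not apply to it as stated. This is also what creates the boundary-TV headache you correctly flag in point~(ii). The paper sidesteps both problems by applying Theorem~\ref{main_theorem} with the \emph{unrestricted} Glauber kernels $P$ and $Q$ (so the conditionals are the clean $\tanh$ expressions everywhere), and only uses the restricted dynamics as a device to bound $\Delta_i h$ via Lemma~\ref{symmetric_solution}, which identifies the unrestricted and restricted Poisson solutions on $\Omega^{+}$ for symmetric $f$. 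You should adopt this structure rather than restricting the kernel in the abstract theorem. Finally, a small stylistic difference: the paper packages all the monomials into a single $\tfrac1n$-Lipschitz test function $f_C$ with $\pm1$ signs rather than averaging the bound over individual $f_R$; your averaging-over-$R$ inside the coupling is a workable substitute and yields the same factor of $k$, so that part of the plan is fine.
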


 For approximately Ramanujan graphs, $\epsilon = \Theta(\frac{1}{\sqrt{d}})$. By choosing a random $d$-regular graph, which is approximately Ramanujan with high probability, we can obtain arbitrarily accurate approximation of moments by choosing $d$ sufficiently large. If we care only about moments up to some fixed order $\bar k$, our result says that one can take any $d = \Omega(\bar k ^2)$ in order to obtain the desired approximation, completely independent of the size of the graph.

 The structure of the approximating graph $G_d$ is important. To see this, let the graph $G_d$ be the disjoint union of $\frac{n}{d}$ cliques each with $d$ nodes, a poor spectral sparsifier of the complete graph $K_n$. Consider the Ising model with interaction matrix $\frac{\beta}{d}\mathcal{A}(G_d)$. This graph is not an expander since it is not connected. If $i$ and $j$ are in different cliques, there is no path between $i$ and $j$ in $G_d$. Therefore, $\tilde{\rho}_{ij} = 0$.  We conclude that only $O(\frac{d}{n})$ fraction of the pairs $(i,j)$ have correlation $\tilde{\rho}_{ij} > 0$. Since $\beta > 1$, it follows by standard analysis for the Curie-Weiss model that $\rho_{ij} > c_1(\beta) >0$ (see \cite{ellis2007entropy}).
 Therefore,
 \begin{align*}
 \frac{1}{{{n}\choose{2}}} \sum_{i j} |\rho_{ij} - \tilde{\rho}_{ij}| &\geq \frac{1}{{{n}\choose{2}}}\sum_{i j} (\rho_{ij} - \tilde{\rho}_{ij})\\
 &\geq c_1(\beta) - O\Big(\frac{d}{n}\Big)\,.
 \end{align*}
 Here we have used the fact that $\tilde{\rho}_{ij} \leq 1$. It follows that  if $\beta > 1$ and $d = o(n)$, then the left-hand side cannot be made arbitrarily small.
 
 The case $0 \leq\beta < 1$ is trivial in the sense that the average correlation is very small in both models and hence automatically well-matched.
 \begin{proposition}
 	 	Consider the same setup as Theorem~\ref{main_application}, but with $0\leq \beta<1$. Then both $\sum_{i\neq j}{\rho_{ij}}=O(n)$ and $\sum_{i\neq j}{\tilde{\rho}_{ij}}=O(n)$, and hence
 	 	$${{n}\choose{2}}\inv \sum_{j}\sum_{i< j}|\rho_{ij} - \tilde{\rho}_{ij}|\leq {{n}\choose{2}}\inv \sum_{j}\sum_{i< j}(\rho_{ij} + \tilde{\rho}_{ij} ) =O\left(\tfrac1n\right)\,.$$
 \end{proposition}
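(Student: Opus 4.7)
The key observable is the total magnetization $S := \sum_{i=1}^n x^i$, since a direct expansion using $(x^i)^2 = 1$ gives
\[
\mathbb{E}_\mu[S^2] \;=\; n + \sum_{i\neq j}\rho_{ij},\qquad
\mathbb{E}_\nu[S^2] \;=\; n + \sum_{i\neq j}\tilde\rho_{ij}.
\]
Because both interaction matrices have zero diagonal and there is no external field, each measure is invariant under the global spin flip $x\mapsto -x$, so $\mathbb{E}_\mu[S]=\mathbb{E}_\nu[S]=0$ and in each case $\mathbb{E}[S^2]=\mathrm{Var}(S)$. The whole problem thus reduces to showing $\mathrm{Var}_\mu(S)$ and $\mathrm{Var}_\nu(S)$ are both $O(n)$.

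In the regime $\beta<1$ both interaction matrices satisfy the Dobrushin-like condition (which, as remarked after the definition in Subsection~\ref{ss:Dobrushin}, is equivalent to $\beta<1$ in these two families): indeed, using that $A$ and $B$ have nonnegative entries, $\|\tfrac{\beta}{n}A\|_2 = \beta\tfrac{n-1}{n}<1$ and $\|\tfrac{\beta}{d}B\|_2 = \tfrac{\beta}{d}\cdot d=\beta<1$. The magnetization $S$ is $a$-Lipschitz in the paper's sense with $a_i=2$, so $\|a\|_2^2 = 4n$. I would invoke Chatterjee's variance/concentration bound for Lipschitz functions of weakly dependent random variables under the Dobrushin-like condition (Section~4.2 of \cite{chatterjee2005concentration}), which yields sub-Gaussian tails with variance proxy $O_\beta(\|a\|_2^2) = O_\beta(n)$ and hence $\mathrm{Var}_\mu(S), \mathrm{Var}_\nu(S) \le C(\beta)\,n$ for some constant $C(\beta)$ depending only on $\beta$.

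Putting the two ingredients together, $\sum_{i\neq j}\rho_{ij} = \mathrm{Var}_\mu(S)-n = O(n)$ and likewise $\sum_{i\neq j}\tilde\rho_{ij}=O(n)$. Since both models are ferromagnetic, Griffiths' inequality gives $\rho_{ij},\tilde\rho_{ij}\ge 0$, so $|\rho_{ij}-\tilde\rho_{ij}| \le \rho_{ij}+\tilde\rho_{ij}$, and
\[
\binom{n}{2}^{\!-1}\!\!\sum_{i<j}|\rho_{ij}-\tilde\rho_{ij}|
\;\le\;\binom{n}{2}^{\!-1}\!\!\sum_{i<j}(\rho_{ij}+\tilde\rho_{ij})
\;=\; O(1/n).
\]
There is essentially no obstacle; the content of the proposition is that in the subcritical regime correlations are small enough that \emph{any} reasonable comparison (even against independent spins) gives a matching average pairwise correlation. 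The only technical ingredient not proved elsewhere in the paper is the $O_\beta(n)$ variance bound for $S$, which is imported from Chatterjee's concentration inequality; one could alternatively derive it from rapid mixing of Glauber dynamics via a Poincar\'e inequality, but the Dobrushin-like route is the most direct.
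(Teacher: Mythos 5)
Your proof is correct and follows the same overall decomposition as the paper: identify $\sum_{i\neq j}\rho_{ij} = \mathrm{Var}_\mu(S) - n$ (and likewise for $\nu$), show each variance is $O(n)$, and then use non-negativity of ferromagnetic correlations (Griffiths) to pass from $|\rho_{ij}-\tilde\rho_{ij}|$ to $\rho_{ij}+\tilde\rho_{ij}$. The one place you diverge is in \emph{how} you obtain $\mathrm{Var}(S)=O(n)$. The paper argues via Markov chain mixing: for $\beta<1$ the Glauber dynamics of both models is $1-\frac{1-\beta}{n}$-contracting (from Theorem~15.1 of Levin--Peres), hence the spectral gap is at least $\frac{1-\beta}{n}$, and the Poincar\'e inequality applied to $f(x)=\sum_i x^i$ (whose Dirichlet form is bounded because each Glauber move changes $S$ by at most $2$) gives $\mathrm{Var}(S)\le \frac{2n}{1-\beta}$. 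You instead invoke Chatterjee's sub-Gaussian concentration inequality for Lipschitz functionals under the Dobrushin-like condition $\|(|L|)\|_2 <1$, reading off the variance bound from the variance proxy $O_\beta(\|a\|_2^2)=O_\beta(n)$. Both routes are valid and ultimately rest on the same weak-dependence input (the operator-norm Dobrushin condition $\beta<1$ is exactly what makes the chain contract), so they are closely related; the paper's Poincar\'e argument is slightly more self-contained given the contraction it has already set up for Glauber dynamics, while your route cites Chatterjee~\cite{chatterjee2005concentration} as a black box --- which the paper itself leans on elsewhere (Theorem~\ref{t:contractIsing}), so this is a perfectly natural substitution. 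You even note the Poincar\'e alternative yourself, so the difference is a matter of packaging rather than substance.
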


\begin{proof}
 To start, note that
 \begin{align}
 \sum_{i\neq j}{\rho_{ij}} &= \mathbb{E}_{\mu} \Big(\sum_{i=1}^n x^i\Big)^2 - n  = \mathrm{var}_{\mu}\Big(\sum_{i=1}^n x^i\Big) -n \qquad \text{and}
 \label{eq:correlation_sum_1}
\\
 \sum_{i\neq j}{\tilde{\rho}_{ij}}&=\mathbb{E}_{\nu} \Big(\sum_{i=1}^n x^i\Big)^2 - n  = \mathrm{var}_{\nu}\Big(\sum_{i=1}^n x^i\Big) -n \,.
 \label{eq:correlation_sum_2}
 \end{align}
 Thus, it suffices to show that the variances on the right-hand sides are $O(n)$. 
 In the equations above, $\mathrm{var}_{\eta}(f)$ refers to variance of $f$ with respect to measure $\eta$. We bound the variance for the measure $\mu$ and identical arguments can be used to bound the variance with respect to $\nu$. 
 
 Whenever $\beta < 1$, from the proof of Theorem 15.1 in \cite{levin2009markov}, we conclude that Glauber dynamics for both these models is $1-\frac{1- \beta}{n}$ contracting. Let $(\lambda_i)_{i=1}^{|\Omega|}$ be the eigenvalues of $P$. We let $|\lambda| := \sup \{1-|\lambda_i|: \lambda_i \neq 1, 1\leq i\leq |\Omega|\}$. From Theorem 13.1 in \cite{levin2009markov}, it follows that the spectral gap, $1-|\lambda|\geq \frac{1-\beta}{n}$. For any function $f : \Omega \to \mathbb{R}$, the Poin\'care inequality for $P$ bounds the variance under the stationary measure as $\mathrm{var}_\mu(f)\leq \frac12(1-|\lambda|)\inv  \mathcal{E}(f,f)$,
where the Dirichlet form
 $\mathcal{E}(f,f):= \sum_{x,y\in \Omega}(f(x)-f(y))^2P(x,y)\mu(x)$
 (see Subsection 13.3 in \cite{levin2009markov}). The Poin\'care inequality then becomes
 \begin{align}\mathrm{var}_{\mu}(f)&\leq \frac{1}{2(1-|\lambda|)} \sum_{x,y\in \Omega}(f(x)-f(y))^2P(x,y)\mu(x) \nonumber\\
 &\leq \frac{n}{2(1-\beta)}\sum_{x,y\in \Omega}(f(x)-f(y))^2P(x,y)\mu(x)\,.
 \label{eq:poincare_inequality}
 \end{align}

Since $P$ is the Glauber dynamics, $P(x,y) >0$ only when $x$ and $y$ differ in at most one coordinate. When we take $f(x) = \sum_i x^i$, then $|f(x) -f(y)| \leq 2$ whenever $P(x,y)>0$. Plugging this into Equation~\eqref{eq:poincare_inequality} yields
$$\mathrm{var}_{\mu}\Big(\sum_i x^i\Big)\leq \frac{2n}{1-\beta} = O(n)\,,$$
and similarly
$\mathrm{var}_{\nu}\big(\sum_i x^i\big) = O(n)$.
%
 \end{proof}

\section{Monotone Coupling and Proof of Theorem~\ref{t:contractIsing}}
\label{s:contractProof}

\subsection{Glauber Dynamics for Ising models and Monotone Coupling}
\label{subsec:monotone_coupling} 
We specialize our previous discussion of the Glauber dynamics in Subsection~\ref{ss:glauber} to an Ising model with interaction matrix $J$. Let $J^{\intercal}_i$ denote the $i$th row of $J$. Given the current state $x \in\Omega= \{-1,1\}^n$, the Glauber Dynamics produces the new state $x^{\prime}$ as follows:

Choose $I \in [n]$ uniformly at random. Construct the next state $x^{\prime}$ as $(x^{\prime})^i= x^i$ for $i\neq I$ and set independently
\begin{equation*}
(x^{\prime})^I = \begin{cases}
1 &\text{ with probability $\tfrac{1}2+ \tfrac12\tanh{J^{\intercal}_Ix}$ }
\\
-1 &\text{ with probability $\tfrac{1}2- \tfrac12\tanh{J^{\intercal}_Ix}$}\,.
\end{cases}
\end{equation*}
We refer to \cite{levin2009markov} for an introduction to mixing of Glauber dynamics for the Ising model. This Markov chain has been studied extensively and it can be shown that it mixes in $O(n\log{n})$ time (and is contracting for the `monotone coupling' described below) for high temperature under the Dobrushin-Shlosman condition~\cite{aizenman1987rapid} and under Dobrushin-like condition~\cite{hayes2006simple}.

We now describe the monotone coupling used in the proof of Theorem~\ref{t:contractIsing}. Let $X_t$ and $Y_t$ be Glauber dynamics chains for the Ising model $\pi_J$ with interaction matrix $J$. Let $P^J$ denote the corresponding kernel. 
 For both chains $X_t$ and $Y_t$, we choose the same random index $I$ and generate an independent random variable $u_t \sim \mathrm{unif}([0,1])$. Set $X_{t+1}^I$ (resp. $Y_{t+1}^I$) to $1$ iff $u_t \leq (\pi_J)_I\big(1\big| X_t^{(\sim I)}\big)$ (resp. $u_t \leq (\pi_J)_I\big(1\big| Y_t^{(\sim I)}\big)$ ). In the case when the entries of $J$ are all positive (i.e, ferromagnetic interactions), one can check that for the coupling above, if $X_0 \geq Y_0$ then $X_t \geq Y_t$ a.s. We note that since $J$ need not be ferromagnetic in the case considered in Theorem~\ref{t:contractIsing}, we cannot ensure that $X_t \geq Y_t$ a.s. if $X_0 \geq Y_0$. (Here $\geq$ is the entrywise partial order.)

\subsection{Auxiliary Lemma} 

 Before proceeding with the proof of Theorem~\ref{t:contractIsing}, we prove the following lemma that relates the quantity we wish to bound to the spectral norm of Ising interaction matrices.
 
\begin{lemma}\label{spectral_bound}
Let $f_1(x),\dots, f_n(x)$ be any real valued functions over $\Omega$ and define the vector $v_f(x) = [f_1,\dots,f_n(x)]^{\intercal}$. Let $\pi_L$ and $\pi_M$ denote Ising models with interaction matrices $L$ and $M$ respectively. Then,
$$\frac{1}{n}\sum_{i=1}^{n}|f_i(x)|\cdot\|(\pi_L)_i(\cdot|x^{(\sim i)})- (\pi_M)_i(\cdot|x^{(\sim i)})\|_{\mathsf{TV}} \leq \|L-M\|_2 \frac{\|v_f\|_2}{2\sqrt{n}}\,.$$ 
In particular, when $L = \frac{\beta}{n}A$ (i.e, $\pi_L(\cdot) = \mu(\cdot)$, the Curie-Weiss model at inverse temperature $\beta$) and $M = \frac{\beta}{d}B$ (i.e, $\pi_M(\cdot) = \nu(\cdot)$, where $\nu(\dot)$ is the Ising model defined in Section~\ref{subsec:ising_model_def}) then
$$\frac{1}{n}\sum_{i=1}^{n}|f_i(x)|\cdot\|\mu_i(\cdot|x^{(\sim i)})- \nu_i(\cdot|x^{(\sim i)})\|_{\mathsf{TV}} \leq \|\tfrac{\beta}{n}A - \tfrac{\beta}{d}B\|_2 \frac{\|v_f\|_2}{2\sqrt{n}}\,.$$ 
\end{lemma}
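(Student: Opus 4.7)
The plan is to reduce the claim to a Cauchy--Schwarz argument after expressing the conditional total variation distance explicitly using the hyperbolic tangent form of Ising conditionals. The second statement is immediate from the first by substitution, so I will focus on the general inequality.

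First, I would write out the single-site conditional probabilities. For any Ising interaction matrix $J$ with rows $J_i^{\intercal}$, we have $(\pi_J)_i(1 \mid x^{(\sim i)}) = \tfrac12 (1 + \tanh(J_i^{\intercal}x))$ (as already noted in the Dobrushin remark earlier). Subtracting the analogous expressions for $L$ and $M$ gives
\[
\big\|(\pi_L)_i(\cdot \mid x^{(\sim i)}) - (\pi_M)_i(\cdot \mid x^{(\sim i)})\big\|_{\mathsf{TV}} \;=\; \tfrac{1}{2}\bigl|\tanh(L_i^{\intercal}x) - \tanh(M_i^{\intercal}x)\bigr|.
\]
Since $\tanh$ is $1$-Lipschitz, this is bounded above by $\tfrac12 |(L-M)_i^{\intercal} x|$.

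Next, I would apply Cauchy--Schwarz to the sum
\[
\sum_{i=1}^{n} |f_i(x)| \cdot |(L-M)_i^{\intercal} x| \;\leq\; \Bigl(\sum_{i=1}^{n} f_i(x)^2\Bigr)^{1/2} \Bigl(\sum_{i=1}^{n} ((L-M)_i^{\intercal} x)^2\Bigr)^{1/2} = \|v_f(x)\|_2 \cdot \|(L-M)x\|_2.
\]
The second factor is controlled by the operator norm: $\|(L-M)x\|_2 \leq \|L-M\|_2 \, \|x\|_2$. Since $x \in \{-1,+1\}^n$, we have $\|x\|_2 = \sqrt{n}$, so combining these bounds yields
\[
\sum_{i=1}^{n} |f_i(x)| \cdot |(L-M)_i^{\intercal} x| \;\leq\; \|L-M\|_2 \cdot \|v_f(x)\|_2 \cdot \sqrt{n}.
\]
Dividing by $2n$ and using the TV bound above produces exactly the inequality claimed.

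For the second assertion, I would simply set $L = \frac{\beta}{n}A$ and $M = \frac{\beta}{d}B$ in the general statement and invoke the definition of $\mu$ and $\nu$ given in Section~\ref{subsec:ising_model_def}. There is no real obstacle here: the only subtle point is the Lipschitz bound for $\tanh$ and the observation that $\|x\|_2 = \sqrt{n}$ uniformly on $\Omega$, which together convert a pointwise estimate into one depending only on the operator norm of the difference of interaction matrices.
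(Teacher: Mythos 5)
Your proposal is correct and follows essentially the same approach as the paper: both use the $\tanh$ form of the Ising conditionals, the $1$-Lipschitz property of $\tanh$, and then Cauchy--Schwarz together with the operator norm bound $\|(L-M)x\|_2 \le \|L-M\|_2\sqrt{n}$. The only cosmetic difference is that the paper first introduces a sign vector $c_i(x)$ to rewrite the sum of absolute values as an inner product $v_{cf}^{\intercal}(L-M)x$ before applying Cauchy--Schwarz, whereas you apply Cauchy--Schwarz directly to the nonnegative terms $|f_i(x)|$ and $|(L-M)_i^{\intercal}x|$, which is a slight streamlining of the same argument.
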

\begin{proof}
The proof follows from the $1$-Lipschitz property of the $\tanh(\cdot)$ function. Let $L^{\intercal}_i$ denote the $i$th row of $L$. We recall that $(\pi_L)_i(1|x^{(\sim i)}) = \frac{1}{2}(1+\tanh{L_i^{\intercal}x})$. There exist $c_i(x) \in \{-1,1\}$ such that the following holds, where we use the notation $v_{cf}^{\intercal}(x) = [c_1(x)f_1(x),\dots.,c_n(x)f_n(x)]$:
\begin{align*}
&\frac{1}{n}\sum_{i=1}^{n}|f_i(x)|\cdot\|(\pi_L)_i(\cdot|x^{(\sim i)})- (\pi_M)_i(\cdot|x^{(\sim i)})\|_{\mathsf{TV}}
\\ &\qquad= \frac{1}{2n}\sum_{i=1}^{n}|f_i(x)|\cdot \big|\tanh{(L_i^{\intercal}x)} - \tanh{(M_i^{\intercal}x)}\big| \\
&\qquad\leq \frac{1}{2n} \sum_{i=1}^{n}|f_i(x)|\cdot\big|\big(L_i^{\intercal} - M_i^{\intercal}\big)x\big| \\
&\qquad=  \frac{1}{2n} \sum_{i=1}^{n}c_i(x)f_i(x)\big(L_i^{\intercal} - M_i^{\intercal}\big)x \\
&\qquad= \frac{1}{2n} v_{cf}^{\intercal}(x)(L-M)x \\
&\qquad\leq \frac{1}{2n}\|x\|_2 \|L - M\|_2 \|v_{cf}\|_2\\
&\qquad= \|L-M\|_2 \frac{\|v_f\|_2}{2\sqrt{n}}\,.& \qedhere
\end{align*}
\end{proof}


\subsection{Proof of Theorem~\ref{t:contractIsing}}
Let $h$ be the solution of the Poisson equation
$$h - P^L h = f - \mathbb{E}_{\pi_L}f\,.$$
In order to apply Theorem~\ref{main_theorem}, we bound the quantity
\begin{align}
|\Delta_i(h)(x)| &= \biggr\rvert\sum_{t=0}^{\infty}\mathbb{E}\Big[f(X_t) - f(Y_t)\Big| X_0 = x^{(i,+)}, Y_0 = x^{(i,-)}\Big]\biggr\rvert \notag
\\
&\leq \sum_{t=0}^{\infty}\mathbb{E}\bigg[\sum_{i=1}^n a_i \ind_{X^i_t \neq Y^i_t}\biggr\rvert X_0 = x^{(i,+)}, Y_0 = x^{(i,-)}\bigg]\notag
\\ \label{e:deltah}
&= \sum_{t=0}^{\infty}\mathbb{E}\Big[ a^{\intercal}\Delta_{X_t, Y_t}\Big| X_0 = x^{(i,+)}, Y_0 = x^{(i,-)}\Big]\,.
\end{align}

The equation above holds for all couplings between $X_t$ and $Y_t$. We choose the monotone coupling as described in Section~\ref{subsec:monotone_coupling}. We recall that $(\pi_L)_i(1|x^{\sim i}) = \frac12(1+\tanh{L_i^{\intercal}x})$. From the definition of Glauber dynamics and monotone coupling it follows that
$$\mathbb{E}\big[\ind_{X^i_t \neq Y_t^i}| X_{t-1}=x, Y_{t-1} = y\big] = (1-\tfrac{1}{n})\ind_{x_i \neq y_i} + \frac{1}{2n}|\tanh{L_i^{\intercal}x} - \tanh{L_i^{\intercal}y}|\,.$$
If $c$ is an $n$-dimensional column vector with positive entries, then
\begin{align*}
\mathbb{E}\big[  c^{\intercal}\Delta_{X_{t+1},Y_{t+1}}\big\rvert X_t = x, Y_t = y\big]&= \mathbb{E}\Big[\sum_{i=1}^n c_i \ind_{X^i_{t+1} \neq Y^i_{t+1}}\Big| X_t = x, Y_t = y\Big] \\
&\leq \sum_{i=1}^n (1-\tfrac{1}{n})a^{\intercal}\Delta_{x,y} \\&\qquad+ \tfrac{1}{2n}\sum_{i=1}^{n}a_i |\tanh{L_i^{\intercal }x} - \tanh{L_i^{\intercal }y}| \\
&\leq c^{\intercal}\left[(1-\tfrac{1}{n})I+ \tfrac{1}{n}|L| \right] \Delta_{x,y}\\
&= c^{\intercal}G \Delta_{x,y} \,,
\end{align*}
where  $G := (1-\tfrac{1}{n})I+ \tfrac{1}{n}|L| $. Clearly, $\|G\|_2 < 1$ and hence $\sum_{t=0}^{\infty} G^t = (I-G)^{-1}$.
Using the tower property of conditional expectation to apply the above inequality recursively, we conclude that
$$\mathbb{E}\Big[  c^{\intercal}\Delta_{X_{t+1},Y_{t+1}}\Big| X_0 = x^{(i,+)}, Y_0 = x^{(i,-)}\Big] \leq  c^{\intercal}G^{t+1}\Delta_{x^{(i,+)},x^{(i,-)}} \,.$$
Plugging the equation above into~\eqref{e:deltah} gives
\begin{align*}
|\Delta_i(h)(x)| \leq a^{\intercal}\bigg[\sum_{t=0}^{\infty} G^t \bigg]\Delta_{x^{(i,+)},x^{(i,-)}} 
&=  a^{\intercal} (I-G)^{-1}\Delta_{x^{(i,+)},x^{(i,-)}} \\
&= \big[a^{\intercal}(I-G)^{-1}\big]^i \,.
\end{align*}
Recall that $\theta := \|(|L|)\|_2 < 1$, which implies that
\begin{align*}
\sqrt{\sum_{i=1}^n \Delta_i(h)^2} \leq \sqrt{\sum_{i=1}^n \big(\left[a^{\intercal}(I-G)^{-1}\right]^i\big)^2} 
&= \|a^{\intercal}(I-G)^{-1}\|_2 \\
&\leq \|a\|_2 \|(I-G)^{-1}\|_2 \\
&\leq \|a\|_2 \frac{1}{1-\|G\|_2}\\
&= \|a\|_2 \frac{n}{1-\theta} \,.
\end{align*}
We invoke Lemma~\ref{spectral_bound} and Theorem~\ref{main_theorem} to complete the proof: 
\begin{align*}
|\mathbb{E}_{\pi_L} f - \mathbb{E}_{\pi_M} f| &\leq \mathbb{E}_{\pi_M}\sqrt{\sum_{i=1}^n \Delta_i(h)^2} \frac{\|L-M\|_2}{2\sqrt{n}} \\
&\leq \frac{\|a\|_2 \sqrt{n}}{2(1-\theta)} \|L-M\|_2\,.
\end{align*}

\section{Ideas in Proof of Theorem~\ref{main_application}}
\label{s:ProofIdeas}

\subsection{Overview}
In this section we overview the main ideas behind the proof of Theorem~\ref{main_application}, which bounds the average difference in $k$th order moments in the Curie-Weiss model $\mu$ and the $d$-regular Ising model $\nu$. 

Let $k$ be any even positive integer such that $k < n$.
For every $R \subset [n]$ such that $|R| = k$, let $C_{R} \in \{-1,1\}$ and define the function $f_C :\Omega \to \mathbb{R}$
\begin{equation}\label{e:f}
	f_C(x) = \frac{1}{2k{{n}\choose{k}}}\sum_{\substack{R \subset [n]\\ |R| = k}} C_{R}\prod_{i\in R}x^i\,.
\end{equation}
We suppress the subscript $C$ in $f_C$. 
Clearly, $f(x) = f(-x)$, i.e., $f$ is symmetric. Moreover, a calculation shows that $f$ is $\frac{1}{n}$-Lipschitz with respect to the Hamming metric. That is, for arbitrary $x,y \in \Omega$,
$
|f(x)-f(y)| \leq \frac{1}{n}\sum_{i=1}^{n}\ind(x^i \neq y^i)
$,
which implies that $|f(x) - f(y)| \leq 1$ for any $x,y \in \Omega$.
In Section~\ref{s:IsingProof}
we will bound the quantity $|\E_\mu f - \E_\nu f|$ uniformly for any choice of $\{C_R\}$, which in turn relates the moments $\rho$ and $\tilde \rho$ (defined in Subsection~\ref{ss:mainIsingMoments}) since
$$
\sup_{C_R}|\E_\mu f - \E_\nu f|  = \frac{1}{2k{{n}\choose{k}}}\sum_{\substack{R \subset [n]\\ |R| = k}} \big|\rho^{(k)}[R] - \tilde{\rho}^{(k)}[R]\big|\,.
$$

Let $P$ be the kernel of the Glauber dynamics of the Curie-Weiss model at inverse temperature $\beta > 1$. By Theorem~\ref{main_theorem}, bounding $|\E_\mu f - \E_\nu f|$ reduces to bounding $\mathbb{E}_{\nu}|\Delta_i(h)|$ for the specific function $h$ obtained by solving the Poisson equation $(I-P)h = f - \mathbb{E}_{\mu}f$.

 By Lemma~\ref{principal_solution}, we can write $h$ in terms of the expectation of a sum over time-steps for Glauber chains $X_t$ and $Y_t$ to obtain
\begin{align}\label{e:h}
{h}(x) - {h}(y)  &= \mathbb{E}\bigg[\sum_{t=0}^{\infty}f({X}_t) - f({Y}_t)\biggr\rvert {X}_0=x , {Y}_0 =y\bigg] 
\end{align}
from which we get
$$
|\Delta_i(h)(x_0)| = \Big|\sum_{t=0}^{\infty}\mathbb{E}\Big[\big(f({X}_t) - f({Y}_t)\big)\Big|{X}_0=x_0^{(i,+)} , {Y}_0 =x_0^{(i,-)}\Big]\Big| \,.
$$
 By selecting $x_0\sim \nu$, 
this yields a method for bounding $\mathbb{E}_{\nu}|\Delta_i(h)|$ via coupling $X_t$ and $Y_t$.

 We now briefly overview the steps involved in bounding $\mathbb{E}_{\nu}|\Delta_i(h)|$.
Let $m^{*}$ be the unique positive solution to $ s =\tanh{\beta s}$. 
\begin{enumerate}
\item[Step 1:]  
For a good enough expander, $m(x):= \frac1n\sum_i x^i$ concentrates exponentially near $m^{*}$ and $-m^{*}$ under measure $\nu$. We show this in Lemma~\ref{expander_magnetisation_concentration}. The subsequent analysis is separated into two cases depending on whether or not $m(x_0)$ is close to $m^*$. 
\item[Step 2:] Theorem~\ref{main_theorem} requires specifying a Markov kernel; because the Glauber dynamics on the Curie-Weiss model mixes slowly when $\beta >1$, we instead use the \emph{restricted} (a.k.a. censored) Glauber dynamics, which restricts the Glauber dynamics to states with majority of $+1$ coordinates and mixes quickly. We justify this change with Lemma~\ref{symmetric_solution}.
\item[Step 3:] Whenever $m(x)$ is not close to $m^{*}$, we show in Lemma~\ref{naive_bound} that $|\Delta_i(h)(x)|$ is at most polynomially large in $n$. This is achieved via coupling $X_t$ and $Y_t$ in \eqref{e:h} and makes use of fast mixing of the chain.
\item[Step 4:] Whenever $m(x)$ is near enough to $m^{*}$, the restricted Glauber dynamics (and Glauber dynamics) for the Curie-Weiss model is contracting for a certain coupling. Using methods similar to the ones used in the proof of Theorem~\ref{t:contractIsing} in the contracting case, we conclude that $|\Delta_i(h)|(x)$ must be small if $m(x)$ is close to $m^*$. We show this in Section~\ref{s:coupling} via Lemmas~\ref{super_martingale_contraction},~\ref{contraction_bound_lemma} and Theorem~\ref{contraction_bound}. 
\item[Step 5:] Section~\ref{s:IsingProof} combines these statements to bound $\mathbb{E}_{\nu}|\Delta_i(h)|$ and prove Theorem~\ref{main_application}.
\end{enumerate}

\subsection{Concentration of Magnetization}
Recall that $m^{*}$ is the largest solution to the equation $\tanh{\beta s} = s$. If $\beta \leq 1$, then $m^{*}=0$ and if $\beta >1$, then $m^{*} > 0$. Recall the magnetization 
$m(x) := \frac1{n}{\sum_{i=1}^{n}x^i}$. Whenever it is clear from context, we denote $m(x)$ by $m$.
\begin{lemma}\label{expander_magnetisation_concentration}
For every $\delta \in (0,1)$, there exists $c(\delta) >0$ and $\epsilon_0(\delta) > 0$ such that for all $\epsilon$-expanders $G_d$ with $\epsilon < \epsilon_0$, 
$$
\nu\big(\{|m-m^{*}| > \delta\}\cap \{|m+m^{*}| > \delta\}\big) \leq C_1(\beta)e^{-c(\delta)n}\,.
$$
\end{lemma}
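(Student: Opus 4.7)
The plan is to transfer the classical magnetization concentration for the Curie--Weiss model $\mu$ to the nearby model $\nu$ by a uniform Radon--Nikodym comparison enabled by the spectral estimate~\eqref{norm_bound}. First I would invoke (or redo via Stirling) the standard large-deviation estimate for the Curie--Weiss magnetization. Since $\mu(x)$ depends only on $m(x)$, a direct counting argument gives $\mu(m(x)=s) \propto \exp(nF(s))$, where $F(s) = H\!\left(\tfrac{1+s}{2}\right) + \tfrac{\beta}{2}s^2$ and $H(p) = -p\log p - (1-p)\log(1-p)$ is the binary entropy. The critical points of $F$ are precisely the solutions of $s=\tanh(\beta s)$, and for $\beta>1$ the two global maximizers are $\pm m^*$, so there exist constants $C_0(\beta)$ and $c_0(\delta,\beta)>0$ such that $\mu(S_\delta) \leq C_0(\beta)e^{-c_0(\delta,\beta)n}$, where $S_\delta := \{|m-m^*|>\delta\}\cap\{|m+m^*|>\delta\}$ (see Chapter IV of~\cite{ellis2007entropy}).

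Next I would use the spectral bound~\eqref{norm_bound}: since $\|x\|_2^2=n$ on $\Omega$, it gives
\[
\bigl|\mathcal{H}_{\beta A/n}(x) - \mathcal{H}_{\beta B/d}(x)\bigr| \;\leq\; \frac{n\beta\epsilon}{2} + \frac{\beta}{2}
\]
uniformly in $x$. This controls both the ratio of partition functions and the pointwise Gibbs factors by $\exp\!\left(\tfrac{n\beta\epsilon}{2} + \tfrac{\beta}{2}\right)$, so the uniform density ratio obeys
\[
\sup_{x\in\Omega} \frac{\nu(x)}{\mu(x)} \;\leq\; \exp(n\beta\epsilon + \beta).
\]

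Combining the two bounds gives
\[
\nu(S_\delta) \;\leq\; e^{n\beta\epsilon + \beta}\,\mu(S_\delta) \;\leq\; C_0(\beta)e^{\beta}\exp\!\bigl(-(c_0(\delta,\beta)-\beta\epsilon)n\bigr),
\]
so choosing $\epsilon_0(\delta) := c_0(\delta,\beta)/(2\beta)$ (treating $\beta$ as fixed, as in the lemma) yields the conclusion with $c(\delta) = c_0(\delta,\beta)/2$ and $C_1(\beta) = C_0(\beta)e^\beta$. The only substantive input is the classical Curie--Weiss large-deviation estimate; the transfer step is elementary. The single subtlety is that the log Radon--Nikodym derivative grows at rate $n\beta\epsilon$, so one must take $\epsilon$ small enough (depending on $\delta$ through $c_0(\delta,\beta)$) for the Curie--Weiss exponential decay to dominate.
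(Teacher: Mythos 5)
Your argument is correct and takes a genuinely different (more modular) route than the paper. The paper proceeds by restricting the partition function of $\nu$ to magnetization level sets $Z_\gamma = \sum_{m(x)=\gamma}\exp(\tfrac{\beta}{2d}x^\intercal Bx)$, sandwiching each $Z_\gamma$ via the spectral bound and Stirling, and then comparing $\sup_{V_\delta}$ and $\sup_{[0,1]}$ of the free energy $\tfrac{\beta}{2}\gamma^2 + H(\tfrac{1+\gamma}{2})$ -- in effect redoing the Curie--Weiss large-deviation analysis from scratch with an $\epsilon$-slack. You instead invoke the classical Curie--Weiss concentration $\mu(S_\delta)\leq C_0 e^{-c_0 n}$ as a black box and transfer it to $\nu$ through a uniform density-ratio bound $\sup_x \nu(x)/\mu(x)\leq \exp(n\beta\epsilon+\beta)$, which follows from the same spectral estimate applied once to the Gibbs factors and once to the ratio of partition functions. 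The two proofs consume the same inputs (spectral bound, entropy-plus-quadratic free energy analysis of Curie--Weiss), but your decomposition is cleaner: it isolates the expander approximation as a single multiplicative $e^{O(n\epsilon)}$ cost and makes the ``take $\epsilon$ small enough for the exponential decay to win'' step transparent, whereas the paper's version folds the $\epsilon$-error into the free-energy maximization. Your version also generalizes immediately to any event $S$ with exponentially small $\mu$-probability, not just magnetization deviations. One minor point: you should note that the density-ratio bound genuinely requires bounding $Z_\mu/Z_\nu$ as well as the pointwise Gibbs factors (which you do, correctly, each contributing $\exp(\tfrac{n\beta\epsilon}{2}+\tfrac{\beta}{2})$), since that is where a careless argument could lose a factor.
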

The proof is essentially the same as the proof of concentration of magnetization in the Curie-Weiss model, but with a few variations. We defer the proof to the appendix.

\subsection{Restricted Glauber Dynamics}

Glauber dynamics for the Curie-Weiss model is well-understood and it can be shown to mix in $O(n\log{n})$ time when $\beta < 1$, $O(n^{\frac{3}{2}})$ time when $\beta = 1$, and takes exponentially long to mix when $\beta >1$ (see \cite{levin2010glauber} and references therein). The reason for exponentially slow mixing is that it takes exponential time for the chain to move from the positive phase to the negative phase and vice-versa. The Restricted Glauber Dynamics, described next, removes this barrier.  

Define $\Omega^{+} = \{x \in \Omega : \sum_{i}x^i \geq 0\}$. \cite{levin2010glauber} and \cite{ding2009censored} considered a censored/ restricted version of Glauber dynamics for the Curie-Weiss model where the chain is restricted to the positive phase $\Omega^+$. 
Let $\hat{X}_t$ be an instance of restricted Glauber dynamics and let $X'$ be obtained from $\hat{X}_t$ via one step of normal Glauber dynamics. If $X' \in \Omega^{+}$, then 
the restricted Glauber dynamics updates to $\hat X_{t+1} = X'$. Otherwise $X^{\prime} \notin \Omega^+$ and we flip all the spins, setting $\hat{X}_{t+1} = -X'$. 

The restricted Glauber dynamics $\hat{X}_t$ with initial state $\hat{X}_0 \in \Omega^{+}$ can be obtained from the normal Glauber dynamics also in a slightly different way.
Let $X_t$ be a Glauber dynamics chain with $X_0 = \hat{X}_0\in \Omega^{+}$, and let
\begin{equation*}
\hat{X}_t = \begin{cases}
X_t & \text{if $X_t \in \Omega^{+}$} \\ -X_t & \text{if $X_t \notin \Omega^{+}$} \,.
\end{cases}
\end{equation*}
Whenever we refer to restricted Glauber dynamics, we assume that it is generated as a function of the regular Glauber dynamics in this way.

 If $\mu$ is the stationary measure of the original Glauber dynamics, then the unique stationary measure for the restricted chain is $\mu^{+}$ over $\Omega^{+}$, given by
\begin{equation}
\mu^{+}(x) = \begin{cases}
2\mu(x) & \text{if $m(x) > 0$} \\
\mu(x) & \text{if $m(x) = 0$}\,.
\end{cases}
\end{equation}
Similarly, we define $\nu^{+}$ over $\Omega^{+}$ by
\begin{equation}
\nu^{+}(x) = \begin{cases}
2\nu(x)  &\text{if $m(x) > 0$} \\
\nu(x) &\text{if $m(x) = 0$}\,.
\end{cases}
\end{equation}
It follows by symmetry that if $f : \Omega \to \mathbb{R}$ is any function such that $f(x) = f(-x)$, then
$$\mathbb{E}_{\mu} f= \mathbb{E}_{\mu^{+}}f
\quad \text{and}\quad \mathbb{E}_{\nu} f= \mathbb{E}_{\nu^{+}}f\,.$$

It was shown in \cite{levin2010glauber} that restricted Glauber dynamics for the Curie-Weiss model mixes in $O(n\log{n})$ time for all $\beta > 1$.

\begin{theorem}[Theorem~5.3 in \cite{levin2010glauber}] \label{t:Levin}
Let $\beta > 1$. There is a constant $c(\beta) > 0$ so that $t_{mix}(n) \leq c(\beta)n\log{n}$ for the Glauber dynamics restricted to $\Omega^{+}$.
\end{theorem}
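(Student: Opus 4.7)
The plan is to prove $O(n\log n)$ mixing of the restricted Glauber dynamics on $\Omega^+$ via a two-stage argument: a burn-in phase in which the magnetization $m(\hat X_t)$ reaches a neighborhood of $m^*$, followed by a path coupling phase that contracts inside that neighborhood. The censoring to $\Omega^+$ eliminates the exponential barrier between $\pm m^*$, so the effective one-dimensional dynamics has a unique stable fixed point at $m^*$ and mixing on the magnetization scale is fast.

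For the burn-in, use $\pi_i(1\mid x^{(\sim i)}) = \tfrac12(1+\tanh(\beta(m(x)-x^i/n)))$ to compute, away from the reflection at $m=0$,
\[
\mathbb{E}[m(\hat X_{t+1}) - m(\hat X_t)\mid \hat X_t = x] = \tfrac{1}{n}\bigl(\tanh(\beta m(x)) - m(x)\bigr) + O(n^{-2}).
\]
Since $\tanh(\beta s) - s$ is bounded away from $0$ with the correct sign outside any fixed window $I^* := [m^*-\delta, m^*+\delta]$, a standard supermartingale/hitting-time argument using the Lyapunov function $|m(\hat X_t) - m^*|$ (or a smoothed variant) shows that the hitting time of $I^*$ is $O(n\log n)$ with high probability from any start. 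The reflecting boundary at $m=0$ only helps, since it prevents the chain from drifting to negative magnetization.

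For the second stage, apply path coupling. Couple two chains $\hat X_t, \hat Y_t$ that differ in a single coordinate by using the same random index $I$ and the same uniform $u_t$. With probability $1/n$, the chosen coordinate is the one of disagreement and the chains agree afterwards; otherwise the probability of creating a new disagreement at coordinate $I$ is at most $\tfrac12|\tanh(\beta(m(x)-x^I/n)) - \tanh(\beta(m(y)-y^I/n))|$. A Taylor expansion of $\tanh(\beta\cdot)$ at $m^*$ together with $1$-Lipschitzness gives that inside $I^*$ this bound is $(\beta(1-(m^*)^2)+o(1))/n$. Because $m^*$ is the \emph{stable} fixed point of $s\mapsto \tanh(\beta s)$ for $\beta>1$, we have $\beta(1-(m^*)^2)<1$, so the expected Hamming distance contracts by a factor $1 - c(\beta)/n$ per step, and path coupling yields $O(n\log n)$ mixing once both chains are in $I^*$.

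The main obstacle is gluing these two stages: after $\hat X_t$ enters $I^*$ it may exit, and the coupling argument requires the chain to remain near $I^*$ throughout. This is handled by a recurrence estimate coming from the same drift analysis, which ensures that excursions out of a slightly enlarged window $I^{**}\supset I^*$ have probability exponentially small over the relevant time scale. A second subtlety is the possibility of reflections at $m=0$ during coupling, but when $m\approx m^*>0$ such reflections have probability exponentially small in $n$ per step and contribute only lower-order terms. Combining the burn-in time with the path coupling bound inside $I^*$ yields $t_{\mathrm{mix}}(n)\le c(\beta)\,n\log n$ as claimed.
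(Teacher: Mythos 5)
The paper does not prove this statement; it cites it verbatim from Levin--Luczak--Peres (Theorem~5.3 of \cite{levin2010glauber}), so the relevant comparison is to their argument, not to a proof in the present paper. Your two-stage plan (drift the magnetization into a window around $m^*$, then path-couple) is a reasonable strategy, but it is \emph{not} the route taken in \cite{levin2010glauber}: there the second phase is a magnetization-matching followed by a configuration-matching coupling (exploiting exchangeability conditional on the magnetization), rather than Bubley--Dyer path coupling on the Hamming graph.

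There are two genuine gaps in your sketch. First, the path coupling step as stated does not go through: for $\beta>1$ the one-step Hamming estimate gives, for neighboring states at magnetization $m$,
\[
\mathbb{E}[d_H(\hat X',\hat Y')]-1 \;\le\; -\tfrac1n + \tfrac{n-1}{n}\cdot\tfrac{\beta}{n}\bigl(1-\tanh^2(\beta m)\bigr),
\]
which is \emph{positive} when $m$ is near $0$ (expansion at rate roughly $1+(\beta-1)/n$). So there is no uniform contraction on $\Omega^+$, and the Bubley--Dyer theorem cannot be applied directly; one needs a stopped-supermartingale argument that kills the contribution after the exit time from the good window and charges the trivial bound $n$ to the exit event (this is exactly the device used elsewhere in this paper, cf.\ Lemma~\ref{super_martingale_contraction}). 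Your ``recurrence estimate'' remark gestures at this but does not make it precise, and the precise statement is the crux, not a technicality. Second, the burn-in from a starting state with $m(x_0)$ near $0$ is not covered by a supermartingale argument with Lyapunov function $|m - m^*|$: near $m=0$ the drift of the magnetization is $\approx (\beta-1)m/n$, which \emph{vanishes} as $m\to 0$, so the drift of $|m-m^*|$ degenerates there and a naive hitting-time bound does not yield $O(n\log n)$. One needs either a multiplicative-drift/logarithmic Lyapunov argument or a two-phase escape (diffusive escape from the $O(n^{-1/2})$ scale in $O(n)$ steps, then exponential approach to $m^*$ in $O(n\log n)$ steps), together with the observation that the reflecting boundary at $m=0$ only accelerates escape. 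With these two repairs the outline can be made to work, but as written it would not compile into a proof.
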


\begin{remark}
\label{rem:coupling_existence}
It follows from the proof of the theorem above that there exists a coupling of the restricted Glauber dynamics such that the chains starting at any two distinct initial states will collide in expected time $c(\beta)n\log{n}$. More concretely, let $\hat{X}_t$ and $\hat{Y}_t$ be two instances of restricted Glauber dynamics such that $\hat{X}_0 = x \in \Omega^{+}$ and $\hat{Y}_0 = y \in \Omega^{+}$. Let $\tau_0 =\inf\{t: \hat{X}_t= \hat{Y}_t\}$. There exists a coupling between the chains such that
$$\sup_{x,y \in \Omega^{+}}\mathbb{E}\big[\tau_0\big| \hat{X}_0=x, \hat{Y}_0 = y\big] \leq c(\beta)n\log{n}\,.$$  
and $\hat{X}_t = \hat{Y}_t$ a.s. $\forall \ t \geq \tau_0$.
\end{remark}

\subsection{Solution to Poisson Equation for Restricted Dynamics}
 The next lemma follows easily from the definitions and we omit its proof.

\begin{lemma}\label{symmetric_solution}
Let $f: \Omega \to \mathbb{R}$ be a symmetric function, i.e., for every $x \in \Omega$, $f(x) = f(-x)$. Let $P$ be the kernel of the Glauber dynamics for the Curie-Weiss model at inverse temperature $\beta$ and let $\hat{P}$ be the kernel for the corresponding restricted Glauber dynamics over $\Omega^{+}$ with stationary measure $\mu^{+}$. Then, the Poisson equations
\begin{enumerate}
\item
$h(x) - (Ph)(x) = f(x)-\mathbb{E}_{\mu}f$
\item
$\hat{h}(x) - (\hat{P}\hat{h})(x) = f(x) - \mathbb{E}_{\mu^{+}}f$
\end{enumerate}
have principal solutions $h$ and $\hat{h}$ such that $h(x) = \hat{h}(x)$ for every $x \in \Omega^{+}$ and $h(x) = \hat{h}(-x)$ for every $x \in \Omega\setminus \Omega^{+}$. In particular, $h$ is symmetric.
\end{lemma}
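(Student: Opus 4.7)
The plan is to simply unpack the principal-solution formula from Lemma~\ref{principal_solution} and exploit two symmetries: the symmetry of $f$ and the sign-flip symmetry of the Curie-Weiss model. To begin, let us record that by Lemma~\ref{principal_solution} one has
\begin{equation*}
h(x) \;=\; \sum_{t=0}^\infty \mathbb{E}\big[f(X_t)-\mathbb{E}_\mu f \,\big|\, X_0=x\big], \qquad
\hat h(x) \;=\; \sum_{t=0}^\infty \mathbb{E}\big[f(\hat X_t)-\mathbb{E}_{\mu^+} f \,\big|\, \hat X_0=x\big],
\end{equation*}
with the convention that $\hat X_t$ is obtained from the regular Glauber chain $X_t$ via the recipe described in Subsection~4.3, namely $\hat X_t = X_t$ when $X_t \in \Omega^+$ and $\hat X_t = -X_t$ otherwise. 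Both series are well defined for the chains we are considering, so we are free to compare them term by term.

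The first step is the routine observation that $\mathbb{E}_\mu f = \mathbb{E}_{\mu^+} f$. Indeed, since $\mu(x)=\mu(-x)$ in the no-external-field Curie-Weiss model and $f(x)=f(-x)$, the contribution of each pair $\{x,-x\}$ with $m(x)\ne 0$ to $\mathbb{E}_\mu f$ equals $2\mu(x)f(x)$, which is precisely $\mu^+(x)f(x)$; configurations with $m(x)=0$ are counted identically in both expectations. So the two constants subtracted inside the sums defining $h$ and $\hat h$ agree, and we may work with $f(X_t)-f(\hat X_t)$ alone.

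Next I treat the case $x \in \Omega^+$. Using the joint construction of $(X_t,\hat X_t)$ from a common $X_0=\hat X_0=x$, at every time $t$ we have $\hat X_t \in \{X_t,-X_t\}$, and since $f$ is symmetric this forces $f(\hat X_t)=f(X_t)$ pointwise along the sample path. Summing over $t$ and taking expectations gives $\hat h(x)=h(x)$, which is the first claim.

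Finally, I must show that $h$ is symmetric on all of $\Omega$, which together with the previous paragraph yields $h(x)=h(-x)=\hat h(-x)$ whenever $x\in \Omega\setminus\Omega^+$. This is where the sign-flip symmetry of the Curie-Weiss dynamics enters: because there is no external field, the Glauber kernel satisfies $P(z,w)=P(-z,-w)$ (the conditional $\mu_i(\pm 1\mid z^{(\sim i)})$ becomes $\mu_i(\mp 1\mid (-z)^{(\sim i)})$), so the process $(-X_t)$ started at $-x$ is equal in law to $(X_t)$ started at $x$. Combining this with $f(-y)=f(y)$ shows that $\mathbb{E}[f(X_t)\mid X_0=x]=\mathbb{E}[f(X_t)\mid X_0=-x]$ for every $t$, and therefore $h(x)=h(-x)$. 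There is no real obstacle here — the only thing to be careful about is remembering that one uses the \emph{specific} coupling of $\hat X_t$ to $X_t$ from Subsection~4.3 in the third step, rather than an arbitrary realization of the restricted chain.
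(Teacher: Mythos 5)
Your proof is correct and fills in exactly the argument the paper intends when it says the lemma ``follows easily from the definitions'' (the paper omits the proof): you unpack the principal-solution series from Lemma~\ref{principal_solution}, note $\mathbb{E}_\mu f = \mathbb{E}_{\mu^+} f$ by symmetry, use the pathwise identity $\hat X_t \in \{X_t,-X_t\}$ together with $f(-y)=f(y)$ to get $h=\hat h$ on $\Omega^+$, and use the sign-flip invariance $P(z,w)=P(-z,-w)$ to get $h(x)=h(-x)$. One small slip in exposition: the specific coupling of $\hat X_t$ to $X_t$ from Subsection~4.3 is used in your second step (the case $x\in\Omega^+$), not the third; the third step relies only on the sign-flip symmetry of the unrestricted kernel.
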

By Lemma~\ref{symmetric_solution}, it is sufficient to solve the Poisson equation, and to bound $\mathbb{E}_\nu |\Delta_i (h)|$, for the restricted Glauber dynamics. Based on Lemmas~\ref{principal_solution} and~\ref{symmetric_solution} we have the following naive bound on the solution of the Poisson equation. 
\begin{lemma}
\label{naive_bound}
Let $f: \Omega \to \mathbb{R}$ be a symmetric function such that for any $x,y \in \Omega$, it holds that: $|f(x)-f(y)| \leq K$. Let $h$ be the solution to the Poisson equation $h - Ph = f - \mathbb{E}_{\mu}f$. Then, for any $x,y\in \Omega$,
$|h(x) - h(y)| \leq KC(\beta)n\log{n}$.
\end{lemma}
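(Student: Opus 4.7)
The plan is to reduce the claim to the restricted Glauber dynamics, where Theorem~\ref{t:Levin} and the coupling described in Remark~\ref{rem:coupling_existence} give the $O(n\log n)$ bound directly.

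First, because $f$ is symmetric, Lemma~\ref{symmetric_solution} tells us that the principal solution $h$ of the unrestricted Poisson equation is itself symmetric and agrees on $\Omega^{+}$ with the principal solution $\hat h$ of the Poisson equation for the restricted dynamics $\hat P$, whose stationary distribution is $\mu^{+}$. Since $h(x) = h(-x)$, to bound $|h(x)-h(y)|$ for arbitrary $x,y\in\Omega$ it suffices to bound $|\hat h(x)-\hat h(y)|$ for $x,y\in\Omega^{+}$: if one of the arguments, say $y$, lies outside $\Omega^{+}$, replace it by $-y\in\Omega^{+}$ and use $h(y)=h(-y)=\hat h(-y)$.

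Second, by the principal-solution formula of Lemma~\ref{principal_solution} applied to $\hat P$,
\begin{equation*}
\hat h(x) - \hat h(y) \;=\; \sum_{t=0}^{\infty}\mathbb{E}\bigl[f(\hat X_t) - f(\hat Y_t)\;\big|\; \hat X_0 = x,\ \hat Y_0 = y\bigr],
\end{equation*}
where $(\hat X_t)$ and $(\hat Y_t)$ are arbitrary couplings of the restricted chain. I would choose the coupling guaranteed by Remark~\ref{rem:coupling_existence}: a coupling under which the coalescence time $\tau_0=\inf\{t:\hat X_t=\hat Y_t\}$ satisfies $\mathbb{E}[\tau_0\mid \hat X_0=x,\hat Y_0=y]\leq c(\beta)n\log n$ uniformly in $x,y\in\Omega^{+}$, and such that $\hat X_t=\hat Y_t$ for all $t\geq\tau_0$.

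Third, once the chains have coalesced the summand vanishes, and before coalescence the hypothesis $|f(u)-f(v)|\leq K$ gives a uniform bound on each term. Hence
\begin{equation*}
|\hat h(x)-\hat h(y)| \;\leq\; \sum_{t=0}^{\infty} K\cdot \mathbb{P}(\tau_0 > t) \;=\; K\cdot \mathbb{E}[\tau_0] \;\leq\; K\,c(\beta)\,n\log n.
\end{equation*}
Combining this with the symmetry reduction from the first paragraph yields $|h(x)-h(y)|\leq K\,C(\beta)\,n\log n$ with $C(\beta):=c(\beta)$.

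There is no real obstacle here: the entire content lies in invoking Lemma~\ref{symmetric_solution} to legitimately switch to the fast-mixing restricted chain, and in extracting the coalescence-time bound from Theorem~\ref{t:Levin} (which is exactly what Remark~\ref{rem:coupling_existence} records). The only point deserving a sentence of care is handling the case where $x$ and $y$ lie on opposite sides of $\Omega^{+}$, which is resolved by replacing one of them by its negation using the symmetry of $h$.
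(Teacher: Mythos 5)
Your proposal is correct and follows essentially the same route as the paper's own proof: invoke Lemma~\ref{symmetric_solution} to pass to the restricted chain, expand $\hat h$ via the principal-solution formula of Lemma~\ref{principal_solution} under the coupling of Remark~\ref{rem:coupling_existence}, and bound the telescoping sum by $K\,\mathbb{E}[\tau_0]\le K\,c(\beta)\,n\log n$. The only cosmetic difference is that you write $\sum_t K\,\mathbb{P}(\tau_0>t)$ where the paper writes $K\,\mathbb{E}\bigl[\sum_t\mathds{1}(\hat X_t\neq\hat Y_t)\bigr]$; these are the same identity for $\mathbb{E}[\tau_0]$.
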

\begin{proof}
By Lemma~\ref{symmetric_solution}, $h$ is symmetric and we can without loss of generality assume that $x\in \Omega^+$. Now, we may work with $\hat h$ instead, since
\begin{equation}
h(x) - h(y) = \begin{cases}
\hat{h}(x) - \hat{h}(y) &\text{ if $y \in \Omega^{+}$} \\
\hat{h}(x) - \hat{h}(-y) &\text{ if $y \in \Omega\setminus \Omega^{+}$}\,.
\end{cases}
\end{equation}
Let $x,y \in \Omega^{+}$ and start two restricted Glauber dynamics Markov Chains for Curie-Weiss model $\hat{X}_t$ and $\hat{Y}_t$ with initial states $\hat{X}_0 = x$ and $\hat{Y}_0 = y$.  Recall the definition $\tau_{0} = \inf\{t: \hat{X}_t = \hat{Y}_t\}$ from Remark~\ref{rem:coupling_existence}. We couple $\hat{X}_t$ and $\hat{Y}_t$ according to Remark~\ref{rem:coupling_existence} and use the bound for coupling time, $\mathbb{E}\left[\tau_0|\hat{X}_0 =x, \hat{Y}_0 = y\right] \leq C(\beta)n\log{n}$. By Lemma~\ref{principal_solution}, we can write $\hat h$ in terms of the expectation of a sum to obtain
\begin{align*}
\hat{h}(x) - \hat{h}(y)  &= \mathbb{E}\left[\sum_{t=0}^{\infty}f(\hat{X}_t) - f(\hat{Y}_t)\biggr\rvert \hat{X}_0=x ,\hat{Y}_0 =y\right] \\
&\leq K\cdot \mathbb{E}\left[\sum_{t=0}^{\infty}\ind(\hat{X}_t\neq \hat{Y}_t)\biggr\rvert \hat{X}_0=x ,\hat{Y}_0 =y\right] \\
&= K\cdot \mathbb{E}\Big[\tau_{0}\Big| \hat{X}_0 = x, \hat{Y}_0 = y\Big] \\
&\leq KC(\beta)n\log{n}\,,
\end{align*}
completing the proof.
\end{proof}

The lemma above gives a rough bound of the form $|\Delta_i(h)(x)| \leq KC(\beta)n\log{n}$ for all $x\in \Omega$. In the next section we improve the bound for $x$ such that $m(x)$ is close to $m^{*}$ via a more delicate coupling argument.

\section{Coupling Argument}
\label{s:coupling}

\subsection{Coupling for Improved Bound on $\Delta_i(h)$}
  For $x,y \in \Omega$, we write $x \geq y$ iff $x^i \geq y^i$ for every $i \in [n]$. 
We recall the monotone coupling from Subsection~\ref{subsec:monotone_coupling}.  If the current states are $X$ and $Y$, we update the states to $X^{\prime}$ and $Y^{\prime}$ respectively as follows: we choose the same random index $I \sim \mathsf{unif} ([n])$. For all $j \neq I$, set $(X^{\prime})^j = X^j $ and $(Y^{\prime})^j = Y^j $. Generate an independent random variable $u_t \sim \mathrm{unif}([0,1])$. 
Set $(X^{\prime})^I$ (and $(Y^{\prime})^I$) to $1$ iff 
$u_t \leq \mu_I(1\rvert X^{(\sim i)})$ 
(and $u_t \leq \mu_I(1\rvert Y^{(\sim i)})$). 
For ferromagnetic Ising models when the update rule above is used, $X^{\prime} \geq Y^{\prime}$ almost surely if $X \geq Y$.  


We will shortly describe the coupling we use for the restricted Glauber dynamics, but we need to first record some useful properties of $g(s)=\tanh{\beta s}-s$ which follow from elementary calculus.
\begin{lemma}
\label{negative_slope_properties} Let $\beta>1$ and consider the function $g(s) = \tanh{\beta s} - s$ for $s \in [0,1]$. Denote by $m^*$ the strictly positive root of $g$. Then $g$ is concave, $g(0)=0$, $g^{\prime}(0) = \beta - 1 >0$, $g^{\prime}(m^{*}) := -\gamma^{*} < 0$, and also
\begin{enumerate}
\item For every $m > m^{*}$, $g^{\prime}(m) < -\gamma^{*}$ and
\item There are $s_1,s_2 \in (0,1)$ with $s_1 < s_2 < m^{*}$ and $g^{\prime}(s_2)< g^{\prime}(s_1) < -\frac{1}{2}\gamma^{*}$\,.
\end{enumerate}
\end{lemma}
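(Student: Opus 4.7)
The lemma collects elementary calculus facts about $g(s) = \tanh(\beta s) - s$, so my plan is to simply compute the first two derivatives and read off each claim. Concretely, I would first compute
\[
g'(s) = \beta\,\mathrm{sech}^2(\beta s) - 1, \qquad g''(s) = -2\beta^2\,\mathrm{sech}^2(\beta s)\,\tanh(\beta s).
\]
Since $\mathrm{sech}^2(\beta s)>0$ always and $\tanh(\beta s) > 0$ strictly for $s>0$, we get $g''(s) < 0$ on $(0,1]$, which yields concavity on $[0,1]$ and (what will be crucial later) strict monotone decrease of $g'$ on $(0,\infty)$. The trivial evaluations $g(0)=\tanh 0-0=0$ and $g'(0)=\beta\,\mathrm{sech}^2(0)-1=\beta-1>0$ give two of the claims immediately.

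Next I would verify $g'(m^*)<0$. Since $g(0)=g(m^*)=0$ with $m^*>0$, Rolle's theorem gives some $c\in(0,m^*)$ with $g'(c)=0$; and because $g'$ is strictly decreasing on $(0,\infty)$, we conclude $g'(m^*)<g'(c)=0$, so setting $\gamma^*:=-g'(m^*)$ gives $\gamma^*>0$. Claim (1) is then immediate from the same strict monotonicity: for any $m>m^*$, $g'(m)<g'(m^*)=-\gamma^*$.

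For claim (2), I would use continuity together with the intermediate value theorem. Since $g'$ is continuous with $g'(0)=\beta-1>0$ and $g'(m^*)=-\gamma^*<0$, there exists $s_*\in(0,m^*)$ with $g'(s_*)=-\tfrac12\gamma^*$. Because $g'$ is strictly decreasing on $(0,\infty)$, I can pick any $s_1,s_2$ with $s_*<s_1<s_2<m^*$ and get
\[
g'(s_2) < g'(s_1) < g'(s_*) = -\tfrac12\gamma^*,
\]
which is exactly what is needed.

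There is no real obstacle here; the entire argument is one derivative calculation followed by monotonicity/IVT reasoning. The only point that requires a tiny bit of care is that the strict inequalities in (1) and (2) all trace back to the strictness of $g''(s)<0$ for $s>0$, which in turn uses $\tanh(\beta s)>0$ strictly for $s>0$; I would make sure to flag this so that $g'$ being \emph{strictly} monotone (not merely non-increasing) on $(0,\infty)$ is clearly established before invoking it twice.
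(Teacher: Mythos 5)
Your proof is correct and complete. The paper does not actually provide a proof of this lemma; it simply records these facts "which follow from elementary calculus." Your filled-in argument — computing $g'$ and $g''$, observing $g''(s) = -2\beta^2\,\mathrm{sech}^2(\beta s)\tanh(\beta s) < 0$ for $s>0$ (hence concavity and strict decrease of $g'$ on $(0,\infty)$), then applying Rolle's theorem for $g'(m^*)<0$, the strict monotonicity of $g'$ for claim (1), and the intermediate value theorem plus monotonicity for claim (2) — is exactly the intended elementary calculus argument, and your flag that the strictness of $g''<0$ for $s>0$ is what underwrites all the strict inequalities is the right point to emphasize.
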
 
We fix values $s_1$ and $s_2$ as given in the lemma (see Figure~\ref{fig:magnetization_profile} to understand the significance of the various quantities defined above). The scalar $s$ indexes the values of magnetization. The restricted Glauber dynamics for the Curie-Weiss model contracts whenever the magnetization value is in the red region -- i.e., where the slope of $g(s)$ is negative. Lemma~\ref{expander_magnetisation_concentration} shows that under measure $\nu(\cdot)$, the magnetization concentrates in the blue region.
 
\begin{figure}
\centering
\includegraphics[scale=0.3]{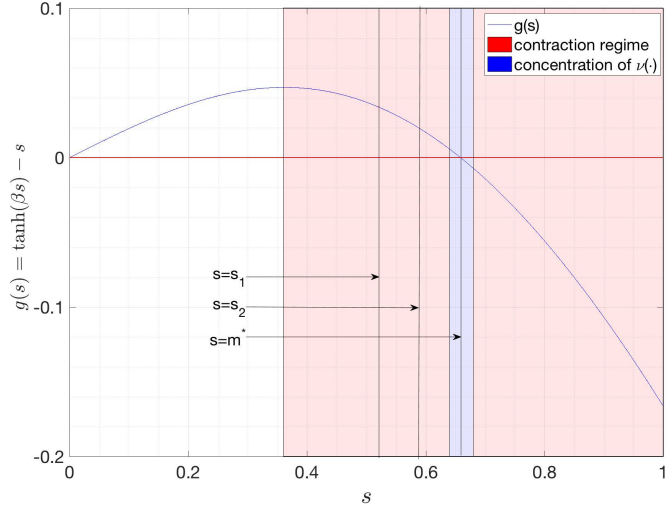}
\vspace{-5mm}
\caption{$g(s)$ for $\beta = 1.2$}
\label{fig:magnetization_profile}
\vspace{-3mm}
\end{figure}

Let the set $S_n := \{-1,-1+\frac{2}{n},\dots,+1\}$ (that is, the set of all possible values of $m(x)$). For any $s \in [-1,1]$, define $\langle s\rangle := \sup S_n\cap [-1,s]$.

\textbf{The Coupling:}
Let $x_0 \in \Omega^{+}$ be an arbitrary point such that $m(x_0) \geq \frac{2}{n}$. 
Consider two restricted Glauber chains $\hat{X}_t$ and $\hat{Y}_t$ for the Curie-Weiss model, with stationary measure $\mu^{+}$, such that $\hat{X}_0 = x_0^{(i,+)} \in \Omega^{+}$ and $\hat{Y}_0 = x_0^{(i,-)} \in \Omega^{+}$. 
We define $\tau_1 = \inf\{t: m(\hat{Y}_t) = \langle s_1\rangle\}$ and use the following coupling between $\hat{X}_t$ and $\hat{Y}_t$:
\begin{enumerate} 
\item If $m(x_0) \leq \langle s_2\rangle $, we couple them as in Remark~\ref{rem:coupling_existence}.
\item 
If $m(x_0) > \langle s_2 \rangle$ and $t \leq \tau_1$, monotone couple $\hat{X}_t$ and $\hat{Y}_t$. If $\hat{X}_{\tau_1} = \hat{Y}_{\tau_1}$, couple them so that $\hat{X}_t= \hat{Y}_t$ for $t > \tau_1$. Since $\hat{X}_0 \geq \hat{Y}_0$, the monotone coupling ensures that $\hat{X}_t \geq \hat{Y}_t$ for $t \leq \tau_1$. 
\item 
 If $\hat{X}_{\tau_1} \neq \hat{Y}_{\tau_1}$, then for $t> \tau_1$, we couple them as in Remark~\ref{rem:coupling_existence}.
\end{enumerate}

Suppose that $x_0 \in \Omega^{+}$ is such that $m(x_0) > \langle s_2 \rangle$. The coupling above is constructed to give a better bound on $|\Delta_i(h)(x_0)|$ than in Lemma~\ref{naive_bound}. The intuition behind it is that whenever $m(\hat{X}_t) \geq \langle s_1\rangle$ and $m(\hat{Y}_t) \geq \langle s_1\rangle$ (that is, when $t \leq \tau_1$), the chains are contracting under the monotone coupling. This is shown in Lemma~\ref{super_martingale_contraction} and used in Lemma~\ref{contraction_bound_lemma} to bound $|\Delta_i(h)(x_0)|$ in terms of $\rho^{K}+ \mathbb{P}(\tau_1 < K)$ (where $\rho = 1-\Theta(\tfrac{1}{n})$ is the contraction coefficient and $K$ is any integer). This proof is a generalization of the proof of Theorem~\ref{t:contractIsing}. 

To use this bound we need to show that $\mathbb{P}(\tau_1 < K)$ is small, i.e.
the walk usually takes a long time to hit $\langle s_1\rangle$. This is shown in Lemma~\ref{lem:escape_time_bound} as a consequence of $m(\hat{Y}_t)$ being a birth-death process with positive drift when it is between $\langle s_1\rangle$ and $\langle s_2 \rangle$.



 Define
\begin{equation}
\tau_{\mathrm{coup}} = \begin{cases}
0 &\text{ if $\hat{X}_{\tau_1} = \hat{Y}_{\tau_1}$} \\
\inf\{t: \hat{X}_t = \hat{Y}_t\} - \tau_1 &\text{ otherwise}\,.
\end{cases}
\end{equation}

\begin{lemma}\label{super_martingale_contraction}
Let $x_0\in \Omega$ be such that $m(x_0) \geq s_2 + \frac{2}{n} $. Let $f$ be symmetric and $\frac{1}{n}$-Lipschitz in each coordinate. Let $\gamma^{*} >0$ be as in Lemma~\ref{negative_slope_properties}. Define the chains $\hat{Y}_t$ and $\hat{X}_t$ as defined above, and let $\rho := \big(1-\frac{\gamma^{*}(n-1)}{2n^2}\big)$. 
Then, the following hold:
\begin{enumerate}
\item
$\mathbb{E}\Big[\big|f(\hat{X}_t) - f(\hat{Y}_t)\big|\ind_{t\leq \tau_1}\Big| \hat{X}_0 = x_0^{(i,+)}, \hat{Y}_0 = x_0^{(i,-)}\Big]\leq \frac{1}{n}\rho^t$\vspace{1mm}
\item
$\mathbb{P}(\hat{X}_{\tau_1} \neq \hat{Y}_{\tau_1}|\tau_1 \geq K) \leq  \frac{\rho^{K}}{\mathbb{P}(\tau_1 \geq K)} $.
\end{enumerate} 
\end{lemma}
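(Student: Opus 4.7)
The plan is to reduce both parts to a single-step contraction estimate on the Hamming distance $D_t := d_H(\hat X_t, \hat Y_t)$ under the monotone coupling, and then lift it to time $t$ by induction. Since $f$ is $\tfrac{1}{n}$-Lipschitz, $|f(\hat X_t) - f(\hat Y_t)| \leq \tfrac{1}{n}D_t$, so for Part (1) it suffices to show $\mathbb{E}[D_t\,\ind_{t \leq \tau_1}] \leq \rho^t$ (noting $D_0 = 1$ because the initial states differ only at coordinate $i$). The inductive step reduces to proving
$$
\mathbb{E}[D_{t+1} \mid \mathcal{F}_t] \leq \rho D_t \quad \text{on } \{\tau_1 > t\},
$$
since on $\{\tau_1 = t\}$ the indicator $\ind_{t+1 \leq \tau_1}$ vanishes.

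For the single-step bound, I would observe that on $\{\tau_1 > t\}$ we have $m(\hat Y_t) > \langle s_1 \rangle$, which is well positive for large $n$; since $m$ changes by at most $2/n$ per step, neither chain can leave $\Omega^+$, so the restricted dynamics coincides with ordinary Glauber and monotone coupling preserves $\hat X_t \geq \hat Y_t$. Decompose $[n] = A \sqcup B \sqcup C$ with $A=\{j : \hat X_t^j=\hat Y_t^j=1\}$, $B=\{j: \hat X_t^j=\hat Y_t^j=-1\}$, $C=\{j: \hat X_t^j=1, \hat Y_t^j=-1\}$, so $|C|=D_t$. A direct expansion of the monotone update gives
$$
\mathbb{E}[D_{t+1} \mid \mathcal{F}_t] = \Bigl(1 - \tfrac{1}{n}\Bigr)D_t + \tfrac{1}{n}\sum_{j=1}^n (p_X^j - p_Y^j),
$$
where $p_X^j=\tfrac12(1+\tanh(\beta m_{\hat X_t} - \beta \hat X_t^j/n))$ and similarly for $p_Y^j$. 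Expanding each $\tanh$ difference by the mean value theorem and summing over the three classes, the $D_t$-dependence telescopes and the inner sum simplifies to $\tfrac{n-1}{n}(g'(m)+1)D_t$ for some $m$ sandwiched between $m(\hat Y_t)-O(1/n)$ and $m(\hat X_t)$. Since $m(\hat Y_t)\geq \langle s_1\rangle$, Lemma~\ref{negative_slope_properties} gives $g'(m) \leq -\gamma^*/2$ (for $n$ large enough to absorb the $O(1/n)$ slack), and substituting yields exactly $\mathbb{E}[D_{t+1}\mid\mathcal{F}_t] \leq (1 - \gamma^*(n-1)/(2n^2))D_t = \rho D_t$.

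Part (2) follows from Part (1) via a coalescence observation: under the monotone coupling, if $\hat X_s = \hat Y_s$ for some $s$, then $p_X^j = p_Y^j$ for every $j$, and since the two chains share the random index and uniform variable they remain identical for all later times up through $\tau_1$. Hence $\{\hat X_{\tau_1}\neq\hat Y_{\tau_1},\,\tau_1\geq K\}\subseteq \{D_K\geq 1,\,K\leq \tau_1\}$, and Markov's inequality combined with Part (1) yields
$$
\mathbb{P}(\hat X_{\tau_1}\neq\hat Y_{\tau_1},\tau_1\geq K) \leq \mathbb{P}(D_K\geq 1,K\leq\tau_1) \leq \mathbb{E}[D_K\,\ind_{K\leq\tau_1}] \leq \rho^K.
$$
Dividing by $\mathbb{P}(\tau_1\geq K)$ gives the stated conditional bound.

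The main obstacle is pinning down the exact constant $\rho = 1 - \gamma^*(n-1)/(2n^2)$ rather than a cruder $1 - c\gamma^*/n$. This requires careful bookkeeping of the $\pm\beta/n$ offsets inside $\tanh$ across $A$, $B$, and $C$: in particular, the $|C|=D_t$ coordinates contribute only $O(D_t(D_t-1)/n)$ to the inner sum (vanishing for $D_t=1$ because $\beta m_{\hat X_t}-\beta/n = \beta m_{\hat Y_t}+\beta/n$), so the contractive part of the drift is driven almost entirely by the $n-1$ coordinates in $A \cup B$, and that is the source of the $(n-1)$ factor. A secondary technicality is verifying that the $O(1/n)$ gap between $\langle s_1\rangle$ and $s_1$ does not destroy the application of Lemma~\ref{negative_slope_properties} for the relevant mean-value points, which holds by continuity of $g'$ for $n$ large enough.
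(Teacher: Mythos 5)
Your proof is correct. Part (1) is the paper's argument in disguise: under the monotone coupling $D_t = \tfrac{n}{2}\bigl(m(\hat X_t)-m(\hat Y_t)\bigr)$, so your one-step drift bound $\mathbb{E}[D_{t+1}\mid\mathcal F_t]\leq\rho D_t$ is precisely the paper's estimate $\mathbb{E}\bigl[m(\hat X_t)-m(\hat Y_t)\mid\mathcal F_{t-1}\bigr]\leq\rho\bigl(m(\hat X_{t-1})-m(\hat Y_{t-1})\bigr)$ (obtained by the same decomposition into $\tanh$ differences, the identity $\tanh\beta s = g(s)+s$, and the mean-value bound $g'\leq-\gamma^*/2$ above $s_1$), and your induction $\mathbb{E}[D_{t+1}\ind_{\tau_1\geq t+1}]\leq\rho\,\mathbb{E}[D_t\ind_{\tau_1\geq t}]$ is an elementary unwrapping of the paper's optional-stopping step applied to the supermartingale $\rho^{-t}\bigl(m(\hat X_t)-m(\hat Y_t)\bigr)$ stopped at $t\wedge\tau_1$. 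Your Part (2), however, is genuinely streamlined. The paper applies the optional stopping theorem at the (a priori unbounded) stopping time $\tau_1$ to bound $\mathbb E\bigl[\rho^{-\tau_1}\bigl(m(\hat X_{\tau_1})-m(\hat Y_{\tau_1})\bigr)\bigr]$, and then applies Markov's inequality to $m(\hat X_{\tau_1})-m(\hat Y_{\tau_1})$. You instead note that the monotone coupling coalesces once the chains meet, so $\{\hat X_{\tau_1}\neq\hat Y_{\tau_1},\,\tau_1\geq K\}\subseteq\{D_K\geq 1,\,K\leq\tau_1\}$, and then apply Markov's inequality to $D_K$ together with the Part (1) bound. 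This bypasses any stopping-time argument at $\tau_1$ and is arguably cleaner, though it gives the same constant. One small presentation imprecision worth flagging: the phrase ``simplifies to $\tfrac{n-1}{n}(g'(m)+1)D_t$ for some $m$'' is not quite right, since distinct coordinates $j$ have distinct mean-value points $\xi_j\in(m_j(\hat Y_t),m_j(\hat X_t))$; the correct statement, as in the paper, is that each $g(m_j(\hat X_t))-g(m_j(\hat Y_t))\leq-\tfrac{\gamma^*}{2}\bigl(m_j(\hat X_t)-m_j(\hat Y_t)\bigr)$ term-by-term, and these sum to $-\tfrac{\gamma^*}{2}\cdot\tfrac{2(n-1)}{n}D_t$.
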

\begin{proof}
Let $ 1 \leq t \leq \tau_1$. By the Lipschitz property of $f$ and monotone coupling between the chains, 
\begin{align}
|f(\hat{X}_t) - f(\hat{Y}_t)| &\leq  \frac{1}{n}\sum_{i=1}^{n}\ind(\hat{X}_t^{i}\neq \hat{Y}_t^{i}) \\
&= \frac{1}{2n}\sum_{i=1}^{n}|\hat{X}^{i}_t - \hat{Y}^{i}_t| \nonumber \\
&= \frac{1}{2n}\sum_{i=1}^{n}\hat{X}^{i}_t - \hat{Y}^{i}_t \nonumber \\
&= \frac{1}{2}\big(m(\hat{X}_t) - m(\hat{Y}_t)\big)\,. \label{lipschitz_property}
\end{align}
Let $m_i := \frac1n{\sum_{j \neq i}x^j}$ so that 
$$
\mu_i(1|x^{(\sim i)}) = \frac12 + \frac12{\tanh(\beta m_i)}\,.
$$
Note that $\sum_{i=1}^n m_i = (n-1)m$. By monotonicity of the coupling and definition of $\tau_1$, $m_i(\hat{X}_{t-1}) \geq m_i(\hat{Y}_{t-1}) \geq s_1$ almost surely, and we assume in what follows that $x_{t-1}$ and $y_{t-1}$ satisfy $m_i(x_{t-1}) \geq m_i(y_{t-1}) \geq s_1$. Conditioning on whether or not an update occurs at a location in which $x_{t-1}$ and $y_{t-1}$ differ, we obtain
\begin{align}
&\mathbb{E}\Big[m(\hat{X}_t) - m(\hat{Y}_t)|\hat{X}_{t-1} = x_{t-1}, \hat{Y}_{t-1} = y_{t-1}\Big] \nonumber \\ 
&= m(x_{t-1})-m(y_{t-1}) - \frac{1}{n^2} \sum_{i=1}^n (x_{t-1}^i -y_{t-1}^i)
\nonumber\\ 
&\quad+ \frac{1}{n^2}\sum_{i=1}^{n}\big(\tanh{(\beta m_i(x_{t-1})})-\tanh{(\beta m_i(y_{t-1}))}\big) \nonumber\\
&= m(x_{t-1})-m(y_{t-1}) -\frac{1}{n(n-1)}\sum_{i=1}^{n}\big(m_i(x_{t-1}) - m_i(y_{t-1})\big)\nonumber\\
&\quad+\frac{1}{n^2}\sum_{i=1}^{n}\big(\tanh(\beta m_i(x_{t-1}))-\tanh{(\beta m_i(y_{t-1}))}\big)\nonumber \\
&\leq  m(x_{t-1})-m(y_{t-1}) +\frac{1}{n^2}\sum_{i=1}^n g\big(m_i(x_{t-1})\big) - g\big(m_i(y_{t-1})\big)\nonumber\\
&\leq m(x_{t-1}) - m(y_{t-1}) -\frac{\gamma^{*}}{2n^2} \sum_{i=1}^{n}\left(m_i(x_{t-1}) - m_i(y_{t-1})\right) \nonumber\\
&= \Big(1-\frac{\gamma^{*}(n-1)}{2n^2}\Big)\big(m(x_{t-1}) - m(y_{t-1})\big) \nonumber\\
&= \rho\big(m(x_{t-1}) - m(y_{t-1})\big)\,.\label{contraction_curie_weiss}
\end{align}
Here we have used the properties of $g$ stated in Lemma~\ref{negative_slope_properties}. Therefore, for $t \leq \tau_1$, $M_t = \rho^{-t}(m(\hat{X}_{t}) - m(\hat{Y}_{t}))$ is a positive super-martingale with respect to the filtration $\mathcal{F}_t = \sigma\big(\hat{X}_0,\hat{Y}_0,\hat{X}_1,\hat{Y}_1,\dots,\hat{X}_t,\hat{Y}_t\big)$ and $\tau_1$ is a stopping time. By the Optional Stopping Theorem, we conclude that 
\begin{align*}
\frac{2}{n} &= \mathbb{E}[M_0]\\
&\geq \mathbb{E}\left[M_{t\wedge\tau_1}\right] \\
&\geq \mathbb{E}\big[\rho^{-t}(m(\hat{X}_t)- m(\hat{Y}_t)) \ind_{t \leq \tau_1}\big]\,.
\end{align*}
Thus, $\mathbb{E}\big[(m(\hat{X}_t) - m(\hat{Y}_t))\ind_{t \leq \tau_1}\big] \leq \frac{2\rho^{t}}{n}$. We use~\eqref{lipschitz_property} to complete the proof of the first part of the lemma.

Turning to the second part, using the fact that $\rho <1$ gives
\begin{align}
\frac{2}{n} &= \mathbb{E}[M_0]\nonumber \\
&\geq \mathbb{E}\left[M_{\tau_1}\right] \nonumber \\
&= \mathbb{E}\left[\rho^{-\tau_1}(m(\hat{X}_{\tau_1})-m(\hat{Y}_{\tau_1})) \right]\nonumber \\
&\geq \mathbb{E}\left[\rho^{-K}(m(\hat{X}_{\tau_1})-m(\hat{Y}_{\tau_1}))|\tau_1 \geq K\right]\mathbb{P}(\tau_1 \geq K) \label{expectation_bound}\,.
\end{align}
By monotone coupling, we know that $\hat{X}_{\tau_1} \neq \hat{Y}_{\tau_1}$ iff $m(\hat{X}_{\tau_1})-m(\hat{Y}_{\tau_1}) \geq \frac{2}{n}$. Therefore, using Markov's inequality and~\eqref{expectation_bound} we conclude that
\begin{align*}
\mathbb{P}(\hat{X}_{\tau_1} \neq \hat{Y}_{\tau_1}|\tau_1 \geq K) &= \mathbb{P}\Big(m(\hat{X}_{\tau_1})-m(\hat{Y}_{\tau_1}) \geq \frac{2}{n}\Big|\tau_1 \geq K\Big) \\
&\leq \frac{n\cdot \mathbb{E}\big[\big(m(\hat{X}_{\tau_1})-m(\hat{Y}_{\tau_1})\big)|\tau_1 \geq K\big]}{2} \\
&\leq 
\frac{\rho^{K}}{\mathbb{P}(\tau_1 \geq K)}\,.&\qedhere
\end{align*} 
\end{proof}

\begin{lemma}\label{contraction_bound_lemma}
Let $x_0\in \Omega$ be such that $m(x_0) \geq s_2 + \frac{2}{n} $. Let $\hat{X}_t$, $\hat{Y}_t$, $f$, $\rho$ and $h$ be as defined above. Then for every $K \in \mathbb{N}$,
$$|\Delta_i(h)(x_0)| \leq \frac{1}{n}\frac{1}{1-\rho} + C(\beta)n\log{n}\left[\rho^K + \mathbb{P}(\tau_1 < K) \right]\,.$$
\end{lemma}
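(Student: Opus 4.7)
The plan is to represent $\Delta_i(h)(x_0)$ as an expected sum along the coupled restricted Glauber chains, split that sum at the stopping time $\tau_1$, and bound the two resulting pieces separately using the two parts of Lemma~\ref{super_martingale_contraction} together with the coarse bound of Lemma~\ref{naive_bound}.

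First I would invoke Lemma~\ref{symmetric_solution} to pass from the Curie--Weiss Poisson solution $h$ to the principal solution $\hat h$ of the Poisson equation for the restricted dynamics. Since $m(x_0) \ge s_2 + \tfrac{2}{n} > 0$, both $x_0^{(i,+)}$ and $x_0^{(i,-)}$ lie in $\Omega^{+}$, so $\Delta_i(h)(x_0) = \hat h(x_0^{(i,+)}) - \hat h(x_0^{(i,-)})$. Applying Lemma~\ref{principal_solution} to the restricted chain yields
\begin{equation*}
\Delta_i(h)(x_0) = \mathbb{E}\Big[\textstyle\sum_{t=0}^{\infty}\big(f(\hat X_t) - f(\hat Y_t)\big) \,\Big|\, \hat X_0 = x_0^{(i,+)},\ \hat Y_0 = x_0^{(i,-)}\Big],
\end{equation*}
independently of the coupling chosen. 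Using the coupling defined just before the lemma, I would split the sum as $\sum_{t=0}^{\tau_1 - 1} + \sum_{t=\tau_1}^{\infty}$.

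For the contracting piece, Lemma~\ref{super_martingale_contraction}(1) applied termwise gives
\begin{equation*}
\Big|\mathbb{E}\textstyle\sum_{t=0}^{\tau_1 - 1}\big(f(\hat X_t) - f(\hat Y_t)\big)\Big| \le \sum_{t=0}^{\infty}\mathbb{E}\big[|f(\hat X_t) - f(\hat Y_t)|\,\ind_{t \le \tau_1}\big] \le \frac{1}{n}\sum_{t=0}^{\infty}\rho^{t} = \frac{1}{n(1-\rho)}.
\end{equation*}
For the tail, the strong Markov property at the stopping time $\tau_1$, combined with the series representation of $\hat h$ from Lemma~\ref{principal_solution} applied to each marginal chain, identifies the conditional expectation as
\begin{equation*}
\mathbb{E}\Big[\textstyle\sum_{t=\tau_1}^{\infty}\big(f(\hat X_t) - f(\hat Y_t)\big) \,\Big|\, \mathcal{F}_{\tau_1}\Big] = \hat h(\hat X_{\tau_1}) - \hat h(\hat Y_{\tau_1}).
\end{equation*}
This vanishes on $\{\hat X_{\tau_1} = \hat Y_{\tau_1}\}$, and on the complementary event Lemma~\ref{naive_bound} (applied with the uniform bound $|f(x) - f(y)| \le 1$) controls its absolute value by $C(\beta)\,n \log n$. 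Hence the tail contributes at most $C(\beta)\,n\log n \cdot \mathbb{P}(\hat X_{\tau_1} \ne \hat Y_{\tau_1})$.

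Finally, I would split $\mathbb{P}(\hat X_{\tau_1} \ne \hat Y_{\tau_1})$ across $\{\tau_1 < K\}$ and $\{\tau_1 \ge K\}$: the former is bounded by $\mathbb{P}(\tau_1 < K)$, and Lemma~\ref{super_martingale_contraction}(2) yields $\mathbb{P}(\hat X_{\tau_1} \ne \hat Y_{\tau_1},\, \tau_1 \ge K) \le \rho^K$. Summing the contributions produces the claimed inequality. The main obstacle is the tail piece: one must verify that the strong Markov reduction is legitimate despite the two-phase coupling, but the Remark~\ref{rem:coupling_existence} coupling beyond $\tau_1$ affects only the joint law and not the marginals, so the representation of $\hat h$ evaluated at the (random) endpoints $\hat X_{\tau_1}$ and $\hat Y_{\tau_1}$ is intact.
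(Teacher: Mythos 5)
Your proof is correct and follows essentially the same strategy as the paper's: split at the stopping time $\tau_1$, control the contracting phase with Lemma~\ref{super_martingale_contraction}(1), control the tail via the $O(n\log n)$ coupling time, and finally split $\mathbb{P}(\hat X_{\tau_1}\ne\hat Y_{\tau_1})$ across $\{\tau_1<K\}$ and $\{\tau_1\ge K\}$ using Lemma~\ref{super_martingale_contraction}(2). The only deviation is cosmetic: for the tail you identify $\mathbb{E}\big[\sum_{t\ge\tau_1}(f(\hat X_t)-f(\hat Y_t))\mid\mathcal{F}_{\tau_1}\big]=\hat h(\hat X_{\tau_1})-\hat h(\hat Y_{\tau_1})$ and then invoke Lemma~\ref{naive_bound}, whereas the paper directly bounds the tail sum by $\mathbb{E}[\tau_{\mathrm{coup}}]$ and then conditions on $(\hat X_{\tau_1},\hat Y_{\tau_1})$ using the coupling-time estimate from Theorem~\ref{t:Levin} (which is also the engine inside Lemma~\ref{naive_bound}), so the two routes produce the same $C(\beta)n\log n$ factor. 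One small technical remark worth making explicit: the interchange of expectation and infinite sum at $\tau_1$, and the splitting of the coupled series $\sum_t\mathbb{E}[f(\hat X_t)-f(\hat Y_t)\mid\mathcal{F}_{\tau_1}]$ into a difference of two marginal $\hat h$-series, both rely on the fact that the post-$\tau_1$ coupling of Remark~\ref{rem:coupling_existence} coalesces in integrable time, which dominates the series; your closing comment gestures at this but does not quite pin it down.
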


\begin{proof}
For the sake of brevity, only in this proof, we implicity assume the conditioning $\hat{X}_0 = x_0^{(i,+)}$ and $\hat{Y}_0 = x_0^{(i,-)}$ whenever the expectation operator is used. Expanding the principal solution to the Poisson equation yields
\begin{align}
|\Delta_i(h)(x_0)| &= \bigg|\sum_{t=0}^{\infty}\mathbb{E}\left[\big(f(\hat{X}_t) - f(\hat{Y}_t)\big)\right]\bigg| \nonumber \\
&\leq \sum_{t=0}^{\infty}\mathbb{E}\left[\big|f(\hat{X}_t)-f(\hat{Y}_t)\big|\right]\nonumber \\
&= \sum_{t=0}^{\infty}\mathbb{E}\left[\big|f(\hat{X}_t)-f(\hat{Y}_t)\big|(\ind_{t \leq \tau_1}+\ind_{t>\tau_1})\right]\nonumber\\
&\leq \sum_{t=0}^{\infty} \frac{\rho^t}{n} + \sum_{t=0}^{\infty}\mathbb{E}\left[\big|f(\hat{X}_t)-f(\hat{Y}_t)\big|\ind_{t>\tau_1} \right] \nonumber \\
&= \frac{1}{n}\frac{1}{1-\rho} + \sum_{t=0}^{\infty}\mathbb{E}\left[\big|f(\hat{X}_t)-f(\hat{Y}_t)\big|\ind_{t>\tau_1}\right]\,.
\label{exponential_coupling}
\end{align}
Here we have used Lemma~\ref{super_martingale_contraction} in the second to last step.
By definition of the coupling, if $\hat{X}_{\tau_1} = \hat{Y}_{\tau_1}$, then $f(\hat{X}_t) - f(\hat{Y}_t) = 0$ for all $t > \tau_1$. Further, $|f(\hat{X}_t)-f(\hat{Y}_t)| \leq \ind_{t \leq \tau_{\mathrm{coup}}+\tau_1}$ (since $|f(x) - f(y)| \leq 1$).  
Given $K\in \mathbb{N}$, we conclude that
\begin{align}
&\sum_{t=0}^{\infty}\mathbb{E}\big[|f(\hat{X}_t)-f(\hat{Y}_t)\big|\ind_{t>\tau_1}\big] \nonumber
\\&\leq 
\mathbb{E}\left[\tau_{\mathrm{coup}} \right] \nonumber \\
&= \sum_{x,y \in \Omega^{+}}\mathbb{E}\big[\tau_{\mathrm{coup}}\big|  \hat{X}_{\tau_1}=x, \hat{Y}_{\tau_1}=y\big]\cdot \mathbb{P}(\hat{X}_{\tau_1} = x, \hat{Y}_{\tau_1} = y)\nonumber 
\\&\leq C(\beta)n\log{n}\sum_{x,y \in \Omega^{+}}\ind_{x\neq y}\mathbb{P}(\hat{X}_{\tau_1} = x, \hat{Y}_{\tau_1} = y)\nonumber \\ 
&= C(\beta)n\log{n} \mathbb{P}(\hat{X}_{\tau_1}\neq \hat{Y}_{\tau_1}) \nonumber \\
&= C(\beta)n\log{n}\mathbb{P}(\hat{X}_{\tau_1}\neq \hat{Y}_{\tau_1} | \tau_1 \geq K)\mathbb{P}(\tau_1 \geq K)\nonumber \\
&\quad + C(\beta)n\log{n}\mathbb{P}(\hat{X}_{\tau_1}\neq \hat{Y}_{\tau_1} | \tau_1 < K)\mathbb{P}(\tau_1 < K)\nonumber \\
&\leq C(\beta)n\log{n}\left[\mathbb{P}(\hat{X}_{\tau_1}\neq \hat{Y}_{\tau_1} | \tau_1 \geq K)\mathbb{P}(\tau_1 \geq K) + \mathbb{P}(\tau_1 < K)\right]\nonumber \\
&\leq \label{non_exponential_coupling}
C(\beta)n\log{n}\left[\rho^K + \mathbb{P}(\tau_1 < K) \right]\,. 
\end{align}
Here we have used Theorem~\ref{t:Levin} in the second inequality and Lemma~\ref{super_martingale_contraction} in the last inequality.
By~\eqref{exponential_coupling} and~\eqref{non_exponential_coupling}, we conclude the result.
\end{proof}

Lemma~\ref{contraction_bound_lemma} bounds $|\Delta_i(h)|$ in terms of $\mathbb{P}(\tau_1 < K)$. We upper bound this probability in the following lemma.

\begin{lemma} 
\label{lem:escape_time_bound}
Let $x_0 \in \Omega$ be such that $m(x_0)\geq \langle s_2\rangle + \frac{2}{n}$.
For every integer $K$,
$$\mathbb{P}(\tau_1 < K) \leq K^2 \exp{\left(-c_1(\beta)n\right)}\,.$$
Here $c_1(\beta) >0$ is a constant that depends only on $\beta$.
\end{lemma}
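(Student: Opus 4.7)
The plan is to show that each attempt of the magnetization process to descend from $\langle s_2\rangle$ down to $\langle s_1\rangle$ succeeds with only $e^{-c(\beta)n}$ probability, and then to union-bound over the at most $K$ such attempts possible in $K$ steps.

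First I would study the magnetization chain $Z_t := m(\hat{Y}_t)$. Since the restricted dynamics agrees with ordinary Glauber on $\Omega^+$ and $\langle s_1\rangle > 0$, so long as $Z_t \ge \langle s_1\rangle$ the process $Z_t$ is a birth-death chain on $S_n$ with step size $2/n$. Letting $p_\uparrow(m), p_\downarrow(m)$ denote the one-step up/down probabilities from magnetization $m$, the same computation that leads to~\eqref{contraction_curie_weiss} yields
\[
\mathbb{E}[Z_{t+1} - Z_t \mid Z_t = m] = \tfrac{2}{n}\bigl(p_\uparrow(m) - p_\downarrow(m)\bigr) = \tfrac{1}{n}\,g(m) + O(n^{-2}),
\]
where $g(m) = \tanh(\beta m) - m$. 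By Lemma~\ref{negative_slope_properties}, $g > 0$ on $(0, m^*)$, so by compactness there exists $\gamma_0 = \gamma_0(\beta) > 0$ with $g(m) \ge \gamma_0$ on $[s_1, s_2]$.

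Next I would construct an exponential supermartingale. Setting $\phi(m) := e^{-\lambda n m}$ for a small constant $\lambda = \lambda(\beta) > 0$, one expands
\[
\frac{\mathbb{E}[\phi(Z_{t+1}) \mid Z_t = m]}{\phi(m)} = 1 + p_\uparrow(m)(e^{-2\lambda} - 1) + p_\downarrow(m)(e^{2\lambda} - 1),
\]
and a Taylor expansion in $\lambda$ combined with the uniform positive gap $p_\uparrow(m) - p_\downarrow(m) \ge \gamma_0/(2n) - O(n^{-2})$ shows this is $\le 1$ uniformly on $m \in [\langle s_1\rangle, \langle s_2\rangle]$ once $\lambda$ is smaller than a constant depending only on $\beta$. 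Letting $\sigma := \inf\{t : Z_t \le \langle s_1\rangle \text{ or } Z_t > \langle s_2\rangle\}$ and starting from $Z_0 = \langle s_2\rangle$, optional stopping gives
\[
e^{-\lambda n \langle s_1\rangle}\mathbb{P}(Z_\sigma \le \langle s_1\rangle) \le \mathbb{E}[\phi(Z_\sigma)] \le \phi(\langle s_2\rangle),
\]
so any single ``descent attempt'' starting at $\langle s_2\rangle$ reaches $\langle s_1\rangle$ before escaping upward with probability at most $e^{-\lambda n(\langle s_2\rangle - \langle s_1\rangle)} \le e^{-c(\beta)n}$.

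Finally I would define a descent attempt as an excursion of $Z_t$ from a visit to $\langle s_2\rangle$ until $Z_t$ either reaches $\le \langle s_1\rangle$ or jumps above $\langle s_2\rangle$. Because $m(\hat{Y}_0) \ge \langle s_2\rangle$ and steps have size exactly $2/n$, the event $\{\tau_1 < K\}$ forces some descent attempt initiated at a visit to $\langle s_2\rangle$ within $K$ steps to succeed. Each attempt occupies at least one step, so at most $K$ attempts begin in $K$ steps, and the union bound yields
\[
\mathbb{P}(\tau_1 < K) \le K \cdot e^{-c(\beta)n} \le K^2 \exp(-c_1(\beta)n).
\]
The main obstacle is producing an $n$-independent constant $\lambda$ for which $\phi(Z_t)$ is genuinely a supermartingale: one must control both the $O(1/n)$ remainder in $p_\uparrow - p_\downarrow$ and the quadratic Taylor term in $e^{\pm 2\lambda}$, but since $\gamma_0$ and $s_2 - s_1$ are $\Theta(1)$ quantities depending only on $\beta$, this is routine.
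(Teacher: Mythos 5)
Your proof is correct in substance and reaches the stated bound, but it takes a genuinely different route from the paper. The paper sandwiches the magnetization process by a homogeneous biased gambler's-ruin chain: it establishes $p^-/p^+ \le \alpha(\beta) < 1$ uniformly on $\{\langle s_1\rangle, \dots, \langle s_2\rangle + 2/n\}$, couples to the constant-bias birth-death chain $\Gamma$, and then applies the explicit gambler's-ruin formula $1 - P_{r-2} \le \alpha^{r-2}$ together with a recursive bound $\mathbb{P}(A_i) \le i\,\mathbb{P}(A_1)$ over the successive boundary hits $T_i$, yielding the $K^2$ factor. You instead bound the drift $\mathbb{E}[Z_{t+1} - Z_t \mid Z_t = m] = g(m)/n + O(n^{-2})$ below by $\gamma_0/n$ on $[s_1, s_2]$, build the exponential supermartingale $\phi(m) = e^{-\lambda n m}$ with $\lambda = \Theta(1)$, and invoke optional stopping to conclude that a single descent excursion from $\langle s_2\rangle$ reaches $\langle s_1\rangle$ with probability $\le e^{-\lambda n(\langle s_2\rangle - \langle s_1\rangle)}$. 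You then union-bound over the at most $K$ excursions launched from $\langle s_2\rangle$ within $K$ steps, which in fact gives the slightly stronger factor $K$ rather than $K^2$. The Lyapunov-function approach is more robust (it requires only a drift bound rather than a uniform ratio bound), while the paper's coupling yields closed-form hitting probabilities.

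One caveat: the intermediate line ``$p_\uparrow(m) - p_\downarrow(m) \ge \gamma_0/(2n) - O(n^{-2})$'' is off by a factor of $n$ and contradicts your own preceding display, which gives $\frac{2}{n}(p_\uparrow - p_\downarrow) = g(m)/n + O(n^{-2})$, i.e.\ $p_\uparrow - p_\downarrow = g(m)/2 + O(n^{-1}) \ge \gamma_0/2 - O(n^{-1})$. The correct order $\Theta(1)$ is essential: if the gap were genuinely $O(1/n)$, the quadratic Taylor term $2\lambda^2(p_\uparrow + p_\downarrow)$ would dominate for constant $\lambda$, forcing $\lambda = O(1/n)$ and destroying the $e^{-cn}$ rate. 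With the corrected magnitude everything you write goes through.
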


The proof, which we defer to Appendix~\ref{subsec:escape_time_bound_proof}, is by coupling the magnetization chain to an appropriate birth-death chain and using hitting time results for birth-death chains.

\begin{theorem}\label{contraction_bound}
If $m(x_0) \geq \langle s_2\rangle + \frac{2}{n}$, then there are constants $c$ and $c'$ depending only on $\beta$ such that
$$|\Delta_i(h)(x_0)| \leq \frac{4}{\gamma^{*}}\big(1+ c \cdot \exp(-c' n)\big)\,.$$ 
\end{theorem}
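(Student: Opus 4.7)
The plan is to combine Lemma~\ref{contraction_bound_lemma} with Lemma~\ref{lem:escape_time_bound} and then optimize the free parameter $K$ so that both error terms are exponentially small in $n$. Lemma~\ref{contraction_bound_lemma} already reduces the problem to showing that the ``main'' term $\frac{1}{n(1-\rho)}$ is bounded by $\frac{4}{\gamma^*}$ up to a lower-order correction, and that the bracketed ``coupling-failure'' term $\rho^K + \mathbb{P}(\tau_1 < K)$ can be forced to be $e^{-\Omega(n)}$ while $K$ stays polynomial in $n$.

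First I would unpack the main term. Since $1 - \rho = \gamma^*(n-1)/(2n^2)$, we have
\begin{equation*}
\frac{1}{n(1-\rho)} = \frac{2n}{\gamma^*(n-1)} = \frac{2}{\gamma^*}\Big(1 + \frac{1}{n-1}\Big)\,,
\end{equation*}
so for $n \geq 2$ this is at most $\tfrac{4}{\gamma^*}$. This is exactly the clean constant appearing in the statement, so the rest of the argument just needs to absorb the remaining terms into an exponentially small additive correction, which can then be repackaged as a multiplicative $(1 + c\,e^{-c'n})$ factor in front of $\tfrac{4}{\gamma^*}$.

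Next I would choose $K = n^2$ (any fixed polynomial in $n$ will do). Then $\rho^K \leq e^{-K(1-\rho)} = e^{-\gamma^*(n-1)/2}$, which is already exponentially small in $n$. Lemma~\ref{lem:escape_time_bound} gives $\mathbb{P}(\tau_1 < K) \leq K^2 e^{-c_1(\beta) n} = n^4 e^{-c_1(\beta) n}$, which is also $e^{-\Omega(n)}$. Plugging into Lemma~\ref{contraction_bound_lemma},
\begin{equation*}
C(\beta) n \log n \cdot \big[\rho^K + \mathbb{P}(\tau_1 < K)\big] \leq C(\beta) n \log n \cdot \big(e^{-\gamma^*(n-1)/2} + n^4 e^{-c_1(\beta) n}\big) \leq c\, e^{-c' n}\,,
\end{equation*}
for suitable constants $c, c'$ depending only on $\beta$, because the polynomial prefactors $n\log n$ and $n^4$ are swallowed by the exponentials at the cost of shrinking the rate slightly.

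Combining, $|\Delta_i(h)(x_0)| \leq \tfrac{4}{\gamma^*} + c\,e^{-c'n}$, and rewriting this as $\tfrac{4}{\gamma^*}\big(1 + c''\,e^{-c'n}\big)$ with $c'' = c\gamma^*/4$ yields the theorem. The only mild subtlety, and thus the main obstacle, is making sure the polynomial-in-$n$ slack introduced both by the choice of $K$ and by the $n\log n$ factor from Theorem~\ref{t:Levin} is genuinely dominated by the exponential decay in Lemma~\ref{lem:escape_time_bound}; this is why I pick $K$ polynomial (so $\rho^K$ is superexponentially small) and rely on $c_1(\beta) > 0$ from Lemma~\ref{lem:escape_time_bound} to kill the $K^2$ factor. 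No substantive new idea is needed beyond these two lemmas.
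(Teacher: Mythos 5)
Your proposal is correct and follows essentially the same approach as the paper: combine Lemma~\ref{contraction_bound_lemma} with Lemma~\ref{lem:escape_time_bound}, observe that $\frac{1}{n(1-\rho)} \leq \frac{4}{\gamma^*}$ for $n\geq 2$, and choose $K = \Theta(n^2)$ so both $C(\beta)n\log n \cdot \rho^K$ and $C(\beta)n\log n \cdot K^2 e^{-c_1(\beta)n}$ are exponentially small. You simply spell out the algebra and the choice of $K$ more explicitly than the paper does.
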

\begin{proof} 
 By Lemma~\ref{contraction_bound_lemma}, we have for every positive integer $K$,
$$|\Delta_i(h)(x_0)| \leq \frac{1}{n}\frac{1}{1-\rho} + C(\beta)n\log{n}\left[\rho^K + \mathbb{P}(\tau_1 < K) \right]\,.$$ 
Clearly, for $n\geq 2$, $$\frac{1}{n}\frac{1}{1-\rho} \leq \frac{4}{\gamma^{*}}\,.
$$
By Lemma~\ref{lem:escape_time_bound}, $\mathbb{P}(\tau_1 < K) \leq K^2 \exp{(-c_1(\beta)n)} $, and we take $K \geq Cn^2$.
\end{proof}

We are now ready to prove Theorem~\ref{main_application}.
\section{Proof of Theorem~\ref{main_application}}
\label{s:IsingProof}
We use all the notation developed in Section~\ref{s:ProofIdeas}. 
 Let $h$ be the solution to the Poisson equation
$(I-P)h = f - \mathbb{E}_{\mu}f$ with $f$ defined in \eqref{e:f} at the beginning of Section~\ref{s:ProofIdeas}.
It follows by Theorem~\ref{main_theorem} and Lemma~\ref{spectral_bound} to show that 
\begin{equation}
|\mathbb{E}_\mu f - \mathbb{E}_\nu f| \leq \|\tfrac{\beta}{n}A-\tfrac{\beta}{d}B\|_2 \mathbb{E}_{\nu}\frac{\|v_{\Delta(h)}\|_2}{2\sqrt{n}}\,,
\label{main_bound2}
\end{equation}
where $v_{\Delta(h)} := (\Delta_1(h),..,\Delta_n(h))^{\intercal}$. By Jensen's inequality,
\begin{align}
\mathbb{E}_{\nu} \|v_{\Delta(h)}\| &= \mathbb{E}_{\nu} \sqrt{\sum_{i=1}^n \Delta_i(h)^2} \leq \sqrt{\sum_{i=1}^n \mathbb{E}_{\nu}\Delta_i(h)^2}\,. \label{sqrt_jensen}
\end{align}
Now, using Lemmas~\ref{symmetric_solution} and~\ref{naive_bound} and Theorem~\ref{contraction_bound} we conclude 
\begin{equation*}
|\Delta_i(h)(x)| \leq \begin{cases}
 \frac{4}{\gamma^{*}}(1+o_n(1)) &\text{if $|m(x)| \geq \langle s_2\rangle + 2/n$}\\
 C(\beta)n\log{n} &\text{otherwise}\,.
\end{cases}
\end{equation*}
We take $0< \delta(\beta) < m^{*} - \langle s_2\rangle - \frac{2}{n}$ to be dependent only on $\beta$. By Lemma~\ref{expander_magnetisation_concentration} there exists $\epsilon_0$ such that if $\epsilon < \epsilon_0$, then for some $c(\delta) > 0$
$$
\nu^{+}(|m-m^{*}| > \delta) \leq e^{-c(\delta)n}\,.
$$
By Lemma~\ref{symmetric_solution}, $\Delta_i(h)^2$ is a symmetric function of  $x$. Therefore, 
\begin{align}
\mathbb{E}_{\nu}\Delta_i(h)^2 &= \mathbb{E}_{\nu^+}\Delta_i(h)^2 \nonumber \\
&\leq \frac{16}{(\gamma^{*})^2}(1+o(1)) + v^{+}(|m-m^{*}| > \delta) C(\beta)^2n^2\log^2{n}\nonumber \\
&\leq \frac{16}{(\gamma^{*})^2}(1+o(1)) + e^{-c(\delta)n} C(\beta)^2n^2\log^2{n}\nonumber \\
&= \frac{16}{(\gamma^{*})^2}(1+o(1))\,. 
\label{expectation_bound_1}
\end{align}

We note that by picking $C_{R} = \mathrm{sgn}(\rho^{(k)}[R] - \tilde{\rho}^{(k)}[R])$, we obtain that
$$\frac{1}{{{n}\choose{k}}} \sum_{\substack{R \subset [n] \\ |R| = k}}|\rho^{(k)}[R] - \tilde{\rho}^{(k)}[R]|  = 2k|\mathbb{E}_\mu f - \mathbb{E}_\nu f|\,.$$
The equation above along with~\eqref{sqrt_jensen},~\eqref{expectation_bound_1}, and~\eqref{main_bound2}, implies
\begin{align*}
\frac{1}{{{n}\choose{k}}} \sum_{\substack{R \subset [n] \\ |R| = k}}|\rho^{(k)}[R] - \tilde{\rho}^{(k)}[R]|  &= 2k|\mathbb{E}_\mu f - \mathbb{E}_\nu f| \\
&\leq 2k\|\tfrac{\beta}{n}A-\tfrac{\beta}{d}B\|_2 \mathbb{E}_{\nu}\frac{\|v_{\Delta(h)}\|_2}{2\sqrt{n}} \\
&\leq \frac{k}{\sqrt{n}}\beta \left(\epsilon + \tfrac{1}{n}\right) \sqrt{\sum_{i}\mathbb{E}_{\nu}(\Delta_i(h))^2} \\
&\leq 4\frac{k\beta}{\gamma^{*}}(1+o_n(1))\left(\epsilon +\tfrac{1}{n}\right)\,,
\end{align*} 
which completes the proof. \hfill \qed
\section{Comparison to Naive Bounds}
\label{s:naive}
Using the symmetry inherent in the Curie-Weiss model, we sketch another method to obtain an inequality similar (but much weaker) to the one in Theorem~\ref{main_application}. We don't give the proofs of the results below. All of them can be proved using definitions and standard techniques.
Let $D_{\mathsf{SKL}}(\mu; \nu) = D_{\mathsf{KL}}(\mu\| \nu) + D_{\mathsf{KL}}(\nu\| \mu)  $ denote the symmetric KL-divergence between measures $\mu$ and $\nu$.
\begin{lemma}
 $D_{\mathsf{KL}} (\nu || \mu)\leq D_{\mathsf{SKL}}(\mu;\nu) \leq n\|\tfrac{\beta}{n}A-\tfrac{\beta}{d}B\|_2 $
\label{KL_bound}
\end{lemma}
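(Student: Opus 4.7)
The first inequality is immediate: since $D_{\mathsf{KL}}(\mu \| \nu) \geq 0$ by nonnegativity of relative entropy, $D_{\mathsf{KL}}(\nu\|\mu) \leq D_{\mathsf{KL}}(\mu\|\nu) + D_{\mathsf{KL}}(\nu\|\mu) = D_{\mathsf{SKL}}(\mu;\nu)$. So the content of the lemma lies in the second inequality, and the plan is to exploit the exponential-family form of the two Ising models, for which the symmetric KL divergence has a particularly clean expression that avoids the partition functions entirely.

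Write $L = \tfrac{\beta}{n} A$ and $M = \tfrac{\beta}{d} B$, so $\mu(x) = Z_\mu^{-1} \exp\bigl(\tfrac12 x^\intercal L x\bigr)$ and $\nu(x) = Z_\nu^{-1} \exp\bigl(\tfrac12 x^\intercal M x\bigr)$. Taking logs gives $\log \mu(x) - \log \nu(x) = \tfrac12 x^\intercal (L-M) x + \log(Z_\nu/Z_\mu)$. Plug this into $D_{\mathsf{KL}}(\mu\|\nu) = \mathbb{E}_\mu[\log\mu - \log\nu]$ and $D_{\mathsf{KL}}(\nu\|\mu) = -\mathbb{E}_\nu[\log\mu-\log\nu]$; when we add them, the $\log(Z_\nu/Z_\mu)$ constants cancel, leaving the identity
\begin{equation*}
D_{\mathsf{SKL}}(\mu;\nu) \;=\; \tfrac{1}{2}\bigl(\mathbb{E}_\mu - \mathbb{E}_\nu\bigr)\!\bigl[X^\intercal (L-M) X\bigr].
\end{equation*}

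To finish, I would use the trivial observation that every $x \in \{-1,+1\}^n$ has $\|x\|_2^2 = n$, so the Rayleigh quotient bound gives $|x^\intercal (L-M) x| \leq \|L-M\|_2 \, \|x\|_2^2 = n\|L-M\|_2$ pointwise. The triangle inequality then yields $|(\mathbb{E}_\mu - \mathbb{E}_\nu)[X^\intercal(L-M)X]| \leq 2n\|L-M\|_2$, and the factor of $\tfrac12$ in the identity above gives the claimed bound $D_{\mathsf{SKL}}(\mu;\nu) \leq n\|\tfrac{\beta}{n}A - \tfrac{\beta}{d}B\|_2$. There is no real obstacle here; the only mildly subtle point is noticing the cancellation of partition functions in the symmetric KL, which is why one gets a quantitative bound despite the intractability of $Z_\mu$ and $Z_\nu$ individually.
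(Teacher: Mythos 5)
Your proof is correct and complete. The paper explicitly omits the proof of Lemma~\ref{KL_bound} (it states that the results in Section~\ref{s:naive} ``can be proved using definitions and standard techniques''), so there is no official argument to compare against, but yours is precisely the standard route: the cancellation of the log-partition-function terms in the symmetric KL for exponential families, followed by the pointwise Rayleigh-quotient bound $|x^\intercal(L-M)x| \le n\|L-M\|_2$ using $\|x\|_2^2 = n$ on the hypercube. The algebra checks out, and the factor bookkeeping ($\tfrac12 \cdot 2n = n$) lands exactly on the stated constant.
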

Let $X \sim \mu$ and $X^{\prime} \sim \mu$ such that they are independent of each other. Define $m_2(X,X^{\prime}) := \frac{1}{n} \sum_{i=1}^{n}X^i(X^{\prime})^i$
\begin{lemma}
For the Curie Weiss model at any fixed temperature,
$$\log \mathbb{E}_\mu \exp{ \lambda(m^2-(m^{*})^2)} \leq O(\log{n}) + 
\frac{C_1(\beta) \lambda^{2}}{2n}$$ and
$$\log\mathbb{E}_{\mu\otimes\mu}\exp{\lambda(m_2^2 -(m^*)^4)} \leq  O(\log{n})+ \frac{C_2(\beta)}{2n}\lambda^2\,.$$
 Here $C_1(\beta)$ and $C_2(\beta)$ are positive constants that depend only on $\beta$.
 \label{sub_gaussian}
\end{lemma}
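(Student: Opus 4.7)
The plan for both bounds is to express the relevant moment generating function as a ratio of explicit partition functions (or Hubbard--Stratonovich integrals), apply Stirling's approximation and Laplace's method to obtain finite-$n$ asymptotics, and then Taylor expand around $\beta$ to see that the linear term cancels against the centering while the quadratic term gives the sub-Gaussian bound.

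For the first bound, the key identity is
\[
\mathbb{E}_\mu[e^{\lambda m^2}] = \frac{Z_{n,\beta + 2\lambda/n}}{Z_{n,\beta}},
\]
since multiplying the Curie--Weiss density by $e^{\lambda m^2}$ simply raises the inverse temperature from $\beta$ to $\beta':=\beta+2\lambda/n$. Using Stirling on $\binom{n}{(1+s)n/2}$ and Laplace on the sum over magnetizations $s$, one obtains $\log Z_{n,\tilde\beta}=n\phi(\tilde\beta)+O(\log n)$, where $\phi(\tilde\beta):=\sup_s[H((1+s)/2)+\tilde\beta s^2/2]$ is the free-energy density and the $O(\log n)$ absorbs the Stirling prefactor together with the Gaussian-width contribution at each of the (one or two) saddles $\pm m^*_{\tilde\beta}$. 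The envelope theorem gives $\phi'(\beta)=(m^*_\beta)^2/2$, so second-order Taylor expansion yields
\[
\log\mathbb{E}_\mu\big[e^{\lambda(m^2-(m^*)^2)}\big] = \tfrac{2}{n}\phi''(\xi)\lambda^2 + O(\log n)
\]
for some $\xi\in[\beta,\beta+2\lambda/n]$. Since $\phi''$ is uniformly bounded on $[\beta,\infty)$ (using $m^*_{\tilde\beta}\to 1$ and $dm^*/d\tilde\beta\to 0$ as $\tilde\beta\to\infty$), this yields the first bound with $C_1(\beta)=4\sup_{\tilde\beta\geq\beta}\phi''(\tilde\beta)$ in the moderate range of $\lambda$; the very-large-$|\lambda|$ regime is handled by the trivial bound $|m^2-(m^*)^2|\leq 1$, which only becomes relevant once $|\lambda|\gtrsim n$.

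For the second bound, apply Hubbard--Stratonovich simultaneously to the two Curie--Weiss densities and to the $e^{\lambda m_2^2}$ factor, introducing independent Gaussians $U_1,U_2\sim N(0,\beta/n)$ and $W\sim N(0,2\lambda)$. After summing out $(X^i,(X')^i)\in\{\pm1\}^2$ coordinate by coordinate, each of the $n$ factors becomes $4[\cosh(W/n)\cosh U_1\cosh U_2 + \sinh(W/n)\sinh U_1\sinh U_2]$. Expanding in powers of $W/n$, integrating out $W$, and matching against the analogous representation of the denominator $Z^2=\mathbb{E}_{U_1,U_2}[(4\cosh U_1\cosh U_2)^n]$ gives
\[
\mathbb{E}_{\mu\otimes\mu}[e^{\lambda m_2^2}] = \mathbb{E}^{\star}_{U_1,U_2}\!\left[e^{\lambda \tanh^2 U_1 \tanh^2 U_2}\right]\cdot e^{O(\lambda^2/n)+O(\log n)},
\]
where $\mathbb{E}^{\star}$ denotes the tilted measure with density $\propto (2\cosh U_1)^n(2\cosh U_2)^n\phi_{\beta/n}(U_1)\phi_{\beta/n}(U_2)$, which factorizes and, by Laplace, concentrates each $U_i$ at $\pm\beta m^*$ with $O(1/\sqrt n)$ fluctuations. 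Therefore each $\tanh^2 U_i$ is $O(1/n)$-sub-Gaussian around $(m^*)^2$, and decomposing $\tanh^2 U_1\tanh^2 U_2 - (m^*)^4 = (m^*)^2(\tanh^2 U_1 - (m^*)^2) + (m^*)^2(\tanh^2 U_2 - (m^*)^2) + (\tanh^2 U_1 - (m^*)^2)(\tanh^2 U_2 - (m^*)^2)$ into two independent sub-Gaussian marginals plus a bounded cross term yields the claimed sub-Gaussian MGF bound.

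The main obstacle in both parts is the bookkeeping of $O(\log n)$ Laplace corrections, especially for $\beta>1$ where two (part~1) or four (part~2) symmetric saddles contribute equally; the $\sqrt{n}$ Gaussian-width factor at each saddle is what produces the $\log n$ term. A secondary issue is extending the Taylor bound uniformly to all $\lambda\in\mathbb{R}$: the quadratic remainder is well-controlled only for $|\lambda|\lesssim n$, but for $|\lambda|\gtrsim n$ the trivial bound from boundedness of $m^2$ and $m_2^2$ is already dominated by $C\lambda^2/(2n)$, so patching the two regimes yields the global inequality. For $\beta<1$ the argument simplifies because $m^*=0$ and the Hubbard--Stratonovich integrals are dominated by a single Gaussian saddle at the origin, giving an even cleaner bound.
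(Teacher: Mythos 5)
The paper does not actually supply a proof of Lemma~7.2; Section~\ref{s:naive} simply asserts that all three lemmas ``can be proved using definitions and standard techniques,'' so there is no paper argument to compare against. Your proposal is therefore judged on its own merits.

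Your argument for the first bound is sound. The identity $\mathbb{E}_\mu[e^{\lambda m^2}] = Z_{n,\beta+2\lambda/n}/Z_{n,\beta}$ is correct, the uniform estimate $\log Z_{n,\tilde\beta} = n\phi(\tilde\beta) + O(\log n)$ over bounded $\tilde\beta$-ranges is standard, and your observation that $\phi''$ is uniformly bounded on $\mathbb{R}$ (including across the phase transition, where $\phi'' \to 3/2$ as $\tilde\beta \downarrow 1$ and $\phi'' \equiv 0$ for $\tilde\beta \le 1$) is exactly what is needed. The patching at $|\lambda|\gtrsim n$ via boundedness of $m^2-(m^*)^2$ is also correct. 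Note, however, that this tilting identity works for \emph{both} signs of $\lambda$, which is essential for the Gibbs-variational application.

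The second bound has a genuine gap: your Hubbard--Stratonovich decoupling of the factor $e^{\lambda m_2^2}$ requires introducing $W\sim N(0,2\lambda)$, which is only a real Gaussian when $\lambda > 0$. For $\lambda < 0$ the representation $e^{\lambda m_2^2} = \mathbb{E}_W[e^{W m_2}]$ is unavailable, and the resulting one-sided MGF bound for $\lambda>0$ does not by itself imply the bound for $\lambda < 0$ (a bounded variable with $\log\mathbb{E} e^{\lambda V}\le a + b\lambda^2$ for $\lambda>0$ can still have $\log\mathbb{E} e^{\lambda V}$ growing linearly in $|\lambda|$ for $\lambda<0$). Since the paper's application substitutes $\lambda = n\,\mathbb{E}_{\nu\otimes\nu}f/C_2$, which can be of either sign, the $\lambda<0$ case is genuinely needed. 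The cleaner route that handles both signs uniformly is the same free-energy strategy as in part~1 applied to the replicated model: write $\mathbb{E}_{\mu\otimes\mu}[e^{\lambda m_2^2}] = \tilde Z_n(\beta,\lambda)/Z_n(\beta)^2$, where $\tilde Z_n(\beta,\lambda) = \sum_{x,x'}\exp\big(\tfrac{\beta n}{2}(m(x)^2+m(x')^2) + \lambda m_2(x,x')^2\big)$, parameterize configurations by the joint type $(n_{++},n_{+-},n_{-+},n_{--})$, apply Stirling and Laplace to get $\log\tilde Z_n = n\tilde\phi(\beta,\lambda) + O(\log n)$ with a smooth $\tilde\phi$, and Taylor-expand in $\lambda$ exactly as you do in part~1. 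Alternatively, one can separately establish a left-tail bound $\mathbb{P}(m_2^2 < (m^*)^4 - t) \lesssim e^{-cnt^2}$ for small $t$ to cover $\lambda<0$. Beyond this, your part-2 sketch also glosses over the convergence of the $W$-integral (which needs $\lambda(1-\tanh^2 U_1\tanh^2 U_2)\lesssim n$) and the control of the regime where $|U_i|$ is large so that $\tanh^2 U_i\to 1$; these are fixable by the exponential tails of the tilted measure $\mathbb{E}^\star$, but they are not merely ``bookkeeping'' and deserve explicit treatment alongside the $\lambda<0$ issue.
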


Consider the set of probability distributions over $\Omega \times \Omega$, $\mathcal{S} = \{M: M \ll \mu\otimes \mu\}$. Let $f: \Omega \times \Omega \to \mathbb{R}$ be defined by $f(x,x^{\prime}) = m_2^{2} - (m^*)^4$. By Gibbs' variational principle, 
$$\log{\mathbb{E}_{\mu\otimes \mu}\exp{\lambda f}} = \sup_{M \in \mathcal{S}} \lambda\mathbb{E}_M f - D_{\mathsf{KL}}(M|| \mu\otimes \mu)\,.$$
Taking $M = \nu \otimes \nu$ (whence $D_{\mathsf{KL}}(\nu \otimes \nu|| \mu\otimes \mu) = 2D_{\mathsf{KL}}(\nu ||  \mu)$) and using Lemma~\ref{sub_gaussian}, we conclude that:
$$\lambda\mathbb{E}_{\nu \otimes \nu} f - 2D(\nu||\mu) \leq  C\log{n}+ \frac{C_2}{2n}\lambda^2\,.$$ 
Letting $\lambda = \frac{n\mathbb{E}_{\nu \otimes \nu} f}{C_2}$, we conclude that
$$|\mathbb{E}_{\nu \otimes \nu} f | = O\left( \sqrt{\frac{\log{n}}{n}} +\sqrt{ \frac{D_{\mathsf{KL}}(\nu ||  \mu)}{n}}\right)\,.$$
and taking $M = \mu \otimes \mu$  (whence $D_{\mathsf{KL}}(\mu \otimes \mu|| \mu\otimes \mu) = 0$) we conclude that
$$|\mathbb{E}_{\mu \otimes \mu} f | = O\left(\sqrt{\frac{\log{n}}{n}}\right) \,.$$
Therefore,
\begin{equation}
|\mathbb{E}_{\mu \otimes \mu} f - \mathbb{E}_{\nu \otimes \nu} f | = O\left( \sqrt{\frac{\log{n}}{n}} +\sqrt{ \frac{D_{\mathsf{KL}}(\nu ||  \mu)}{n}}\right)\,.
\label{expectation_compare_1}
\end{equation}
By similar considerations, taking $g(x) = m^2 - (m^*)^2$, we conclude that
\begin{equation}
\left|\mathbb{E}_{\nu }[ g]- \mathbb{E}_{\mu} [g]\right| = O\left( \sqrt{\frac{\log{n}}{n}} +\sqrt{ \frac{D_{\mathsf{KL}}(\nu ||  \mu)}{n}}\right)\,.
\label{expectation_compare_2}
\end{equation}

For the Curie-Weiss model, by symmetry, $\rho_{ij} = \rho$ (the same for all $i\neq j$). 
Clearly,
$$\mathbb{E}_\mu m^2 = \frac{1}{n} + \frac{2}{n^2} \sum_{i\neq j} \rho$$
$$\mathbb{E}_\nu m^2 = \frac{1}{n} + \frac{2}{n^2} \sum_{i\neq j} \tilde{\rho}_{ij}$$
$$\mathbb{E}_{\mu\otimes \mu} m_2^2 = \frac{1}{n} +\frac{2}{n^2} \sum_{i\neq j} \rho^2$$
$$\mathbb{E}_{\nu \otimes \nu} m_2^2 = \frac{1}{n} + \frac{2}{n^2} \sum_{i\neq j} \tilde{\rho}^2_{ij}\,.$$
Therefore,
\begin{align*}
 \sum_{i\neq j} (\rho_{ij} -\tilde{\rho}_{ij})^2
 &= \sum_{i\neq j} \tilde{\rho}_{ij}^2 +\rho^2 - 2\rho(\sum_{i\neq j} \tilde{\rho}_{ij}) \\
 &= \sum_{i\neq j} \tilde{\rho}_{ij}^2 +\rho^2 - 2\rho\left(\sum_{i\neq j} \rho + \frac{n^2}{2}(\mathbb{E}_\nu m^2 - \mathbb{E}_\mu m^2)\right) \\
 &= \sum_{i \neq j} \tilde{\rho}_{ij}^2 -\rho^2 - n^2 (\mathbb{E}_\nu m^2 - \mathbb{E}_\mu m^2)) \\
 &= \frac{n^2}{2}\left(\mathbb{E}_{\nu \otimes \nu} m_2^2 - \mathbb{E}_{\mu \otimes \mu} m_2^2\right)-n^2 (\mathbb{E}_\nu m^2 - \mathbb{E}_\mu m^2)\\
&\leq n^2 |\mathbb{E}_{\mu \otimes \mu} f - \mathbb{E}_{\nu \otimes \nu} f | + n^2|\mathbb{E}_\mu g - \mathbb{E}_\nu g|\,.
\end{align*}
Using the equation above and Equations~\ref{expectation_compare_1} and~\ref{expectation_compare_2} and Lemma~\ref{KL_bound} we conclude that
\begin{equation}
 \frac{1}{{{n}\choose{2}}}\sum_{i\neq j} (\rho_{ij} -\tilde{\rho}_{ij})^2 \leq O\left( \sqrt{\frac{\log{n}}{n}} +\sqrt{\|\tfrac{\beta}{n}A-\tfrac{\beta}{d}B\|_2 }\right)\,.
\end{equation}
When $\epsilon = o(\tfrac{\log{n}}{n})$, the equation above reduces to:
$$ \frac{1}{{{n}\choose{2}}}\sum_{i\neq j} (\rho_{ij} -\tilde{\rho}_{ij})^2 \leq O\left( \sqrt{\epsilon} \right)\,.$$
This is similar to the result in Theorem~\ref{main_application} but weaker by a 4th power.

\section*{Acknowledgment} GB is grateful to Andrea Montanari and Devavrat Shah for discussions on related topics. Also we thank Gesine Reinert and Nathan Ross for exchanging manuscripts with us.

\bibliographystyle{imsart-number}
\bibliography{references}

\begin{thebibliography}{37}

\bibitem{aizenman1987rapid}
\begin{bincollection}[author]
\bauthor{\bsnm{Aizenman},~\bfnm{Michael}\binits{M.}} \AND
  \bauthor{\bsnm{Holley},~\bfnm{Richard}\binits{R.}}
(\byear{1987}).
\btitle{Rapid convergence to equilibrium of stochastic {I}sing models in the
  {D}obrushin-{S}hlosman regime}.
In \bbooktitle{Percolation theory and ergodic theory of infinite particle
  systems}
\bpages{1--11}.
\bpublisher{Springer}.
\end{bincollection}
\endbibitem

\bibitem{aldous2000reversible}
\begin{barticle}[author]
\bauthor{\bsnm{Aldous},~\bfnm{David}\binits{D.}} \AND
  \bauthor{\bsnm{Fill},~\bfnm{James~Allen}\binits{J.~A.}}
(\byear{2000}).
\btitle{Reversible Markov chains and random walks on graphs. book in
  preparation}.
\bjournal{URL for draft at http://www. stat. Berkeley. edu/users/aldous}.
\end{barticle}
\endbibitem

\bibitem{banerjee2008model}
\begin{barticle}[author]
\bauthor{\bsnm{Banerjee},~\bfnm{Onureena}\binits{O.}},
  \bauthor{\bsnm{Ghaoui},~\bfnm{Laurent~El}\binits{L.~E.}} \AND
  \bauthor{\bsnm{d'Aspremont},~\bfnm{Alexandre}\binits{A.}}
(\byear{2008}).
\btitle{Model selection through sparse maximum likelihood estimation for
  multivariate {G}aussian or binary data}.
\bjournal{Journal of Machine learning research}
\bvolume{9}
\bpages{485--516}.
\end{barticle}
\endbibitem

\bibitem{batson2013spectral}
\begin{barticle}[author]
\bauthor{\bsnm{Batson},~\bfnm{Joshua}\binits{J.}},
  \bauthor{\bsnm{Spielman},~\bfnm{Daniel~A}\binits{D.~A.}},
  \bauthor{\bsnm{Srivastava},~\bfnm{Nikhil}\binits{N.}} \AND
  \bauthor{\bsnm{Teng},~\bfnm{Shang-Hua}\binits{S.-H.}}
(\byear{2013}).
\btitle{Spectral sparsification of graphs: theory and algorithms}.
\bjournal{Communications of the ACM}
\bvolume{56}
\bpages{87--94}.
\end{barticle}
\endbibitem

\bibitem{bresler2016learning}
\begin{barticle}[author]
\bauthor{\bsnm{Bresler},~\bfnm{Guy}\binits{G.}} \AND
  \bauthor{\bsnm{Karzand},~\bfnm{Mina}\binits{M.}}
(\byear{2016}).
\btitle{Learning a Tree-Structured {I}sing Model in Order to Make Predictions}.
\bjournal{arXiv preprint arXiv:1604.06749}.
\end{barticle}
\endbibitem

\bibitem{bresler2018optimal}
\begin{barticle}[author]
\bauthor{\bsnm{Bresler},~\bfnm{Guy}\binits{G.}} \AND
  \bauthor{\bsnm{Nagaraj},~\bfnm{Dheeraj}\binits{D.}}
(\byear{2018}).
\btitle{Optimal Single Sample Tests for Structured versus Unstructured Network
  Data}.
\bjournal{arXiv preprint arXiv:1802.06186}.
\end{barticle}
\endbibitem

\bibitem{brush1967history}
\begin{barticle}[author]
\bauthor{\bsnm{Brush},~\bfnm{Stephen~G}\binits{S.~G.}}
(\byear{1967}).
\btitle{History of the {L}enz-{I}sing model}.
\bjournal{Reviews of modern physics}
\bvolume{39}
\bpages{883}.
\end{barticle}
\endbibitem

\bibitem{chatterjee2005concentration}
\begin{barticle}[author]
\bauthor{\bsnm{Chatterjee},~\bfnm{Sourav}\binits{S.}}
(\byear{2005}).
\btitle{Concentration inequalities with exchangeable pairs ({Ph. D.} thesis)}.
\bjournal{arXiv preprint math/0507526}.
\end{barticle}
\endbibitem

\bibitem{chatterjee2011exponential}
\begin{barticle}[author]
\bauthor{\bsnm{Chatterjee},~\bfnm{Sourav}\binits{S.}},
  \bauthor{\bsnm{Fulman},~\bfnm{Jason}\binits{J.}} \AND
  \bauthor{\bsnm{R{\"o}llin},~\bfnm{Adrian}\binits{A.}}
(\byear{2011}).
\btitle{Exponential approximation by {S}tein's method and spectral graph
  theory}.
\bjournal{ALEA Lat. Am. J. Probab. Math. Stat}
\bvolume{8}
\bpages{197--223}.
\end{barticle}
\endbibitem

\bibitem{chen1975poisson}
\begin{barticle}[author]
\bauthor{\bsnm{Chen},~\bfnm{Louis~HY}\binits{L.~H.}}
(\byear{1975}).
\btitle{Poisson approximation for dependent trials}.
\bjournal{The Annals of Probability}
\bpages{534--545}.
\end{barticle}
\endbibitem

\bibitem{daskalakis2016testing}
\begin{barticle}[author]
\bauthor{\bsnm{Daskalakis},~\bfnm{Constantinos}\binits{C.}},
  \bauthor{\bsnm{Dikkala},~\bfnm{Nishanth}\binits{N.}} \AND
  \bauthor{\bsnm{Kamath},~\bfnm{Gautam}\binits{G.}}
(\byear{2016}).
\btitle{Testing {I}sing Models}.
\bjournal{arXiv preprint arXiv:1612.03147}.
\end{barticle}
\endbibitem

\bibitem{daskalakis2017concentration}
\begin{barticle}[author]
\bauthor{\bsnm{Daskalakis},~\bfnm{Constantinos}\binits{C.}},
  \bauthor{\bsnm{Dikkala},~\bfnm{Nishanth}\binits{N.}} \AND
  \bauthor{\bsnm{Kamath},~\bfnm{Gautam}\binits{G.}}
(\byear{2017}).
\btitle{Concentration of Multilinear Functions of the {I}sing Model with
  Applications to Network Data}.
\bjournal{arXiv preprint arXiv:1710.04170}.
\end{barticle}
\endbibitem

\bibitem{ding2009censored}
\begin{barticle}[author]
\bauthor{\bsnm{Ding},~\bfnm{Jian}\binits{J.}},
  \bauthor{\bsnm{Lubetzky},~\bfnm{Eyal}\binits{E.}} \AND
  \bauthor{\bsnm{Peres},~\bfnm{Yuval}\binits{Y.}}
(\byear{2009}).
\btitle{Censored {G}lauber dynamics for the mean field {I}sing model}.
\bjournal{Journal of Statistical Physics}
\bvolume{137}
\bpages{407--458}.
\end{barticle}
\endbibitem

\bibitem{dobler2015stein}
\begin{barticle}[author]
\bauthor{\bsnm{D{\"o}bler},~\bfnm{Christian}\binits{C.}}
(\byear{2015}).
\btitle{Stein's method of exchangeable pairs for the {B}eta distribution and
  generalizations}.
\bjournal{Electronic Journal of Probability}
\bvolume{20}.
\end{barticle}
\endbibitem

\bibitem{dobrushin1970prescribing}
\begin{barticle}[author]
\bauthor{\bsnm{Dobrushin},~\bfnm{Roland~L}\binits{R.~L.}}
(\byear{1970}).
\btitle{Prescribing a system of random variables by conditional distributions}.
\bjournal{Theory of Probability \& Its Applications}
\bvolume{15}
\bpages{458--486}.
\end{barticle}
\endbibitem

\bibitem{ellis2007entropy}
\begin{bbook}[author]
\bauthor{\bsnm{Ellis},~\bfnm{Richard}\binits{R.}}
(\byear{2007}).
\btitle{Entropy, large deviations, and statistical mechanics}.
\bpublisher{Springer}.
\end{bbook}
\endbibitem

\bibitem{feinberg2012handbook}
\begin{bbook}[author]
\bauthor{\bsnm{Feinberg},~\bfnm{Eugene~A}\binits{E.~A.}} \AND
  \bauthor{\bsnm{Shwartz},~\bfnm{Adam}\binits{A.}}
(\byear{2012}).
\btitle{Handbook of Markov decision processes: methods and applications}
\bvolume{40}.
\bpublisher{Springer Science \& Business Media}.
\end{bbook}
\endbibitem

\bibitem{friedman2008proof}
\begin{bbook}[author]
\bauthor{\bsnm{Friedman},~\bfnm{Joel}\binits{J.}}
(\byear{2008}).
\btitle{A proof of Alon's second eigenvalue conjecture and related problems}.
\bpublisher{American Mathematical Soc.}
\end{bbook}
\endbibitem

\bibitem{Fulman2013}
\begin{barticle}[author]
\bauthor{\bsnm{Fulman},~\bfnm{Jason}\binits{J.}} \AND
  \bauthor{\bsnm{Ross},~\bfnm{Nathan}\binits{N.}}
(\byear{2013}).
\btitle{Exponential approximation and {S}tein's method of exchangeable pairs}.
\bjournal{ALEA Lat. Am. J. Probab. Math. Stat.}
\bvolume{10}
\bpages{1--13}.
\bmrnumber{3083916}
\end{barticle}
\endbibitem

\bibitem{georgii2011gibbs}
\begin{bbook}[author]
\bauthor{\bsnm{Georgii},~\bfnm{Hans-Otto}\binits{H.-O.}}
(\byear{2011}).
\btitle{Gibbs measures and phase transitions}
\bvolume{9}.
\bpublisher{Walter de Gruyter}.
\end{bbook}
\endbibitem

\bibitem{gheissari2017concentration}
\begin{barticle}[author]
\bauthor{\bsnm{Gheissari},~\bfnm{Reza}\binits{R.}},
  \bauthor{\bsnm{Lubetzky},~\bfnm{Eyal}\binits{E.}} \AND
  \bauthor{\bsnm{Peres},~\bfnm{Yuval}\binits{Y.}}
(\byear{2017}).
\btitle{Concentration inequalities for polynomials of contracting {I}sing
  models}.
\bjournal{arXiv preprint arXiv:1706.00121}.
\end{barticle}
\endbibitem

\bibitem{goldstein2013stein}
\begin{barticle}[author]
\bauthor{\bsnm{Goldstein},~\bfnm{Larry}\binits{L.}} \AND
  \bauthor{\bsnm{Reinert},~\bfnm{Gesine}\binits{G.}}
(\byear{2013}).
\btitle{Stein's method for the {B}eta distribution and the
  {P}{\'o}lya-{E}ggenberger {U}rn}.
\bjournal{Journal of Applied Probability}
\bvolume{50}
\bpages{1187--1205}.
\end{barticle}
\endbibitem

\bibitem{greig1989exact}
\begin{barticle}[author]
\bauthor{\bsnm{Greig},~\bfnm{Dorothy~M}\binits{D.~M.}},
  \bauthor{\bsnm{Porteous},~\bfnm{Bruce~T}\binits{B.~T.}} \AND
  \bauthor{\bsnm{Seheult},~\bfnm{Allan~H}\binits{A.~H.}}
(\byear{1989}).
\btitle{Exact maximum a posteriori estimation for binary images}.
\bjournal{Journal of the Royal Statistical Society. Series B (Methodological)}
\bpages{271--279}.
\end{barticle}
\endbibitem

\bibitem{griffiths1967correlations}
\begin{barticle}[author]
\bauthor{\bsnm{Griffiths},~\bfnm{Robert~B}\binits{R.~B.}}
(\byear{1967}).
\btitle{Correlations in {I}sing ferromagnets. I}.
\bjournal{Journal of Mathematical Physics}
\bvolume{8}
\bpages{478--483}.
\end{barticle}
\endbibitem

\bibitem{hayes2006simple}
\begin{binproceedings}[author]
\bauthor{\bsnm{Hayes},~\bfnm{Thomas}\binits{T.}}
\btitle{A simple condition implying rapid mixing of single-site dynamics on
  spin systems}.
In \bbooktitle{2006 47th Annual IEEE Symposium on Foundations of Computer
  Science (FOCS'06)}.
\end{binproceedings}
\endbibitem

\bibitem{ising1925beitrag}
\begin{barticle}[author]
\bauthor{\bsnm{Ising},~\bfnm{Ernst}\binits{E.}}
(\byear{1925}).
\btitle{Beitrag zur theorie des ferromagnetismus}.
\bjournal{Zeitschrift f{\"u}r Physik A Hadrons and Nuclei}
\bvolume{31}
\bpages{253--258}.
\end{barticle}
\endbibitem

\bibitem{levin2010glauber}
\begin{barticle}[author]
\bauthor{\bsnm{Levin},~\bfnm{David~A}\binits{D.~A.}},
  \bauthor{\bsnm{Luczak},~\bfnm{Malwina~J}\binits{M.~J.}} \AND
  \bauthor{\bsnm{Peres},~\bfnm{Yuval}\binits{Y.}}
(\byear{2010}).
\btitle{Glauber dynamics for the mean-field {I}sing model: cut-off, critical
  power law, and metastability}.
\bjournal{Probability Theory and Related Fields}
\bvolume{146}
\bpages{223--265}.
\end{barticle}
\endbibitem

\bibitem{levin2009markov}
\begin{bbook}[author]
\bauthor{\bsnm{Levin},~\bfnm{David~Asher}\binits{D.~A.}},
  \bauthor{\bsnm{Peres},~\bfnm{Yuval}\binits{Y.}} \AND
  \bauthor{\bsnm{Wilmer},~\bfnm{Elizabeth~Lee}\binits{E.~L.}}
(\byear{2009}).
\btitle{Markov chains and mixing times}.
\bpublisher{American Mathematical Soc.}
\end{bbook}
\endbibitem

\bibitem{nilli1991second}
\begin{barticle}[author]
\bauthor{\bsnm{Nilli},~\bfnm{Alon}\binits{A.}}
(\byear{1991}).
\btitle{On the second eigenvalue of a graph}.
\bjournal{Discrete Mathematics}
\bvolume{91}
\bpages{207--210}.
\end{barticle}
\endbibitem

\bibitem{reinert2017}
\begin{barticle}[author]
\bauthor{\bsnm{Reinert},~\bfnm{Gesine}\binits{G.}} \AND
  \bauthor{\bsnm{Ross},~\bfnm{Nathan}\binits{N.}}
(\byear{2017}).
\btitle{Approximating stationary distributions of fast mixing {G}lauber
  dynamics, with applications to exponential random graphs}.
\bjournal{arXiv preprint arXiv:1712.05736}.
\end{barticle}
\endbibitem

\bibitem{ross2011fundamentals}
\begin{barticle}[author]
\bauthor{\bsnm{Ross},~\bfnm{Nathan}\binits{N.}}
(\byear{2011}).
\btitle{Fundamentals of {S}tein's method}.
\bjournal{Probability Surveys}
\bvolume{8}
\bpages{210--293}.
\end{barticle}
\endbibitem

\bibitem{schneidman2006weak}
\begin{barticle}[author]
\bauthor{\bsnm{Schneidman},~\bfnm{Elad}\binits{E.}},
  \bauthor{\bsnm{Berry~II},~\bfnm{Michael~J}\binits{M.~J.}},
  \bauthor{\bsnm{Segev},~\bfnm{Ronen}\binits{R.}} \AND
  \bauthor{\bsnm{Bialek},~\bfnm{William}\binits{W.}}
(\byear{2006}).
\btitle{Weak pairwise correlations imply strongly correlated network states in
  a neural population}.
\bjournal{Nature}
\bvolume{440}
\bpages{1007}.
\end{barticle}
\endbibitem

\bibitem{sly2010computational}
\begin{binproceedings}[author]
\bauthor{\bsnm{Sly},~\bfnm{Allan}\binits{A.}}
(\byear{2010}).
\btitle{Computational transition at the uniqueness threshold}.
In \bbooktitle{Foundations of Computer Science (FOCS), 2010 51st Annual IEEE
  Symposium on}
\bpages{287--296}.
\bpublisher{IEEE}.
\end{binproceedings}
\endbibitem

\bibitem{spielman2011spectral}
\begin{barticle}[author]
\bauthor{\bsnm{Spielman},~\bfnm{Daniel~A}\binits{D.~A.}} \AND
  \bauthor{\bsnm{Teng},~\bfnm{Shang-Hua}\binits{S.-H.}}
(\byear{2011}).
\btitle{Spectral sparsification of graphs}.
\bjournal{SIAM Journal on Computing}
\bvolume{40}
\bpages{981--1025}.
\end{barticle}
\endbibitem

\bibitem{stein1972bound}
\begin{binproceedings}[author]
\bauthor{\bsnm{Stein},~\bfnm{Charles}\binits{C.}}
(\byear{1972}).
\btitle{A bound for the error in the normal approximation to the distribution
  of a sum of dependent random variables}.
In \bbooktitle{Proceedings of the Sixth Berkeley Symposium on Mathematical
  Statistics and Probability, Volume 2: Probability Theory}.
\bpublisher{The Regents of the University of California}.
\end{binproceedings}
\endbibitem

\bibitem{wainwright2008graphical}
\begin{barticle}[author]
\bauthor{\bsnm{Wainwright},~\bfnm{Martin~J}\binits{M.~J.}} \AND
  \bauthor{\bsnm{Jordan},~\bfnm{Michael~I}\binits{M.~I.}}
(\byear{2008}).
\btitle{Graphical models, exponential families, and variational inference}.
\bjournal{Foundations and Trends{\textregistered} in Machine Learning}
\bvolume{1}
\bpages{1--305}.
\end{barticle}
\endbibitem

\bibitem{weitz2005combinatorial}
\begin{barticle}[author]
\bauthor{\bsnm{Weitz},~\bfnm{Dror}\binits{D.}}
(\byear{2005}).
\btitle{Combinatorial criteria for uniqueness of Gibbs measures}.
\bjournal{Random Structures \& Algorithms}
\bvolume{27}
\bpages{445--475}.
\end{barticle}
\endbibitem

\end{thebibliography}
\appendix
\section{Proofs of Lemmas}
\subsection{Proof of Lemma~\ref{expander_magnetisation_concentration}}
\begin{proof}
The proof follows the standard proof of concentration of magnetization for the Curie-Weiss model, but with a slight modification to account for the spectral approximation. Let $\gamma \in S_n  := \{m(x) : x \in \Omega\}$. By $M_\gamma$ we denote the set $\{x \in \Omega : m(x) = \gamma\}$. Note that $|M_{\gamma}| = {{n}\choose{n\frac{1+\gamma}{2}}}$ and $|S_n| = n+1$.

We define
$$Z_\gamma = \sum_{x \in M_\gamma}  e^{\frac{\beta}{2d}x^{\intercal}Bx}$$ 
and 
\begin{align*}
Z = \sum_{x \in \Omega} e^{\frac{\beta}{2d}x^{\intercal}Bx} 
= \sum_{\gamma \in S_n} Z_{\gamma}\,,
\end{align*}
and for any $U \subset S_n$, 
\begin{align*}
Z_U = \sum_{x: m(x) \in U} e^{\frac{\beta}{2d}x^{\intercal}Bx} = \sum_{\gamma \in U} Z_{\gamma}\,.
\end{align*}

Clearly, $$ \frac{\beta}{2n}x^{\intercal}Ax - \frac{1}{2}\|\tfrac{\beta}{n}A-\tfrac{\beta}{d}B\|x^{\intercal}x\leq \frac{\beta}{2d}x^{\intercal}Bx \leq \frac{\beta}{2n}x^{\intercal}Ax + \frac{1}{2}\|\tfrac{\beta}{n}A-\tfrac{\beta}{d}B\|x^{\intercal}x\,.$$\
Using the identities $\frac{\beta}{2n}x^{\intercal}Ax  = \frac{\beta n}{2}(m^2-\frac{1}{n})$ and $x^{\intercal}x = n$, as well as~\eqref{norm_bound}, we conclude that 
$$\frac{\beta n}{2} (m^2 - \epsilon - \frac{2}{n}) \leq \frac{d\beta}{2n}x^{\intercal}Bx \leq \frac{\beta n}{2} (m^2 + \epsilon )\,,$$ which implies that
$$  {{n}\choose{n\frac{1+\gamma}{2}}}\exp{\frac{\beta n}{2} (\gamma^2 - \epsilon - \frac{2}{n})} \leq Z_{\gamma} \leq {{n}\choose{n\frac{1+\gamma}{2}}}  \exp{\frac{\beta n}{2} (\gamma^2 + \epsilon )}\,.$$
Let $H: [0,1] \to \mathbb{R}$ be the binary Shannon entropy. Stirling's approximation gives that 
 $$ \frac{e^{nH(\frac{1+\gamma}{2})}}{\sqrt{2n}} \leq {{n}\choose{n\frac{1+\gamma}{2}}} \leq e^{nH(\frac{1+\gamma}{2})}$$
 and we conclude that
 $$  \frac{\beta}{2}\gamma^2  + H\left(\frac{1+\gamma}{2}\right)- \frac{\beta}{2}\epsilon + O\left(\frac{\log{n}}{n}\right) \leq \frac{\log{Z_{\gamma}}}{n} \leq \frac{\beta}{2} \gamma^2  + H\left(\frac{1+\gamma}{2}\right) + \frac{\beta}{2}\epsilon + O\left(\frac{\log{n}}{n}\right)\,.$$
Using the equation above, for any $U \subset S_n$
 \begin{align*}
  \frac{\log{Z_U}}{n} &\leq \log{\left(|U| \max_{\gamma \in U}Z_{\gamma}\right)}\\
  &= \frac{\log{|U|}}{n} + \max_{\gamma \in U}\frac{\log{Z_\gamma}}{n} \\
  &\leq \max_{\gamma \in U} \left[\frac{\beta}{2} \gamma^2  + H\left(\frac{1+\gamma}{2}\right) \right]+ \frac{\beta}{2}\epsilon + O\left(\frac{\log{n}}{n}\right)\,.
 \end{align*} 
 Here we have used the fact that $|U| \leq |S_n| = n+1$. Similarly,
 \begin{align*}
 \frac{\log{Z}}{n} &= \log{\frac{\sum_{\gamma \in S_n} Z_{\gamma}}{n}}\\
 &\geq \max_{\gamma \in S_n}\frac{\log{Z_{\gamma}}}{n} \\
 &\geq  \max_{\gamma \in S_n} \left[\frac{\beta}{2} \gamma^2  + H\left(\frac{1+\gamma}{2}\right) \right]- \frac{\beta}{2}\epsilon + O\left(\frac{\log{n}}{n}\right)\,.
 \end{align*}
 
 Define $U_\delta = S_n\setminus ([m^*-\delta,m^*+\delta]\cup[-m^*-\delta,-m^*+\delta])$ and $V_\delta = [0,1]\setminus ([m^*-\delta,m^*+\delta]\cup[-m^*-\delta,-m^*+\delta])$.
 Clearly, $$ \nu\left(\{|m(x)-m^{*}| > \delta\}\cap \{|m(x)+m^{*}| > \delta\}\right) = \nu\left(m(x) \in U_\delta\right) $$ is the probability to be bounded. We get
\begin{align} 
&\frac{\log{ \nu\left(m(x) \in U_\delta\right)}}{n} 
\notag \\&= \frac{\log{Z_{U_{\delta}}}}{n} - \frac{\log{Z}}{n}\nonumber \\
&\leq \max_{\gamma \in U_\delta} \left[\frac{\beta}{2} \gamma^2  + H\left(\frac{1+\gamma}{2}\right) \right] - \max_{\gamma \in S_n} \left[\frac{\beta}{2} \gamma^2  + H\left(\frac{1+\gamma}{2}\right) \right] + \beta\epsilon + O\left(\frac{\log{n}}{n}\right)\nonumber \\
&= \sup_{\gamma \in V_\delta} \left[\frac{\beta}{2} \gamma^2  + H\left(\frac{1+\gamma}{2}\right) \right] - \sup_{\gamma \in [0,1]} \left[\frac{\beta}{2} \gamma^2  + H\left(\frac{1+\gamma}{2}\right) \right] + \beta\epsilon + O\left(\frac{\log{n}}{n}\right)\,. \label{exponential_upper_bound}
 \end{align}
 Here we have used the properties of $H(\cdot)$ to show that $$\sup_{\gamma \in V_\delta} \Big[\frac{\beta}{2} \gamma^2+ H\Big(\frac{1+\gamma}{2}\Big) \Big] = \max_{\gamma \in U_\delta} \Big[\frac{\beta}{2} \gamma^2  + H\Big(\frac{1+\gamma}{2}\Big) \Big] + O\Big(\frac{\log{n}}{n}\Big) $$ and $$\sup_{\gamma \in [0,1]} \Big[\frac{\beta}{2} \gamma^2+ H\left(\frac{1+\gamma}{2}\right) \Big] = \max_{\gamma \in S_n} \Big[\frac{\beta}{2} \gamma^2  + H\Big(\frac{1+\gamma}{2}\Big) \Big] + O\Big(\frac{\log{n}}{n}\Big) \,. $$
 
 It can be shown by simple calculus that for $\beta > 1$, the function $\frac{\beta}{2} \gamma^2  + H\left(\frac{1+\gamma}{2}\right) $ has (all of) its global maxima at $m^*$ and $-m^*$. Since $V_\delta = [0,1]\setminus ([m^*-\delta,m^*+\delta]\cup[-m^*-\delta,-m^*+\delta])$, using the continuity of the function, we conclude that for some $c_0(\delta) > 0$,
 $$\sup_{\gamma \in V_\delta} \left[\frac{\beta}{2} \gamma^2  + H\left(\frac{1+\gamma}{2}\right) \right] - \sup_{\gamma \in [0,1]} \left[\frac{\beta}{2} \gamma^2  + H\left(\frac{1+\gamma}{2}\right) \right] < -c_0(\delta) < 0 \,.$$
Choosing $\epsilon$ small enough so that $\epsilon\beta < \frac{c_0(\delta)}{2}$, and using Equation~\eqref{exponential_upper_bound}, we conclude that
$$\nu\left(\{|m(x)-m^{*}| > \delta\}\cap \{|m(x)+m^{*}| > \delta\}\right)  \leq \exp{\left(-\frac{c_0(\delta)n}{2} + O(\log{n})\right)}$$
and the statement of the lemma follows.
\end{proof}

\subsection{Proof of Lemma~\ref{lem:escape_time_bound}}
\label{subsec:escape_time_bound_proof}
For the Curie-Weiss model, one can check that the Glauber dynamics also induces a Markov chain over the magnetization. For $m \in (0,1)$ the probability that $m \to m -\frac{2}{n}$ is $$\left(\frac{1+m}{2}\right)\left(\frac{1- \tanh{(\beta m+\frac{\beta}{n})}}{2}\right) =: p_{-}(m)$$ and probability that $m \to m + \frac{2}{n}$ is $$\left(\frac{1-m}{2}\right)\left(\frac{1+ \tanh{(\beta m-\frac{\beta}{n})}}{2}\right) =: p_{+}(m)\,.$$ At any step, this chain can only change the value of magnetization by $\frac{2}{n}$. By hypothesis, we start the restricted Glauber dynamics chain such that $\hat{Y}_0 = x_0^{(i,-)}$ with $m(x_0)\geq \langle s_2\rangle + \frac2n$. Therefore, $m(\hat{Y}_0) \geq \langle s_2 \rangle$. Recall that, by definition of $\tau_1$, $m(\hat{Y}_{\tau_1}) = \langle s_1\rangle$. Clearly, there exists $t < \tau_1$ such that $m(\hat{Y}_{t}) = \langle s_2\rangle$. That is, to reach a state with magnetization of $\langle s_1\rangle$, the chain must first hit a state with magnetization $\langle s_2 \rangle$. Therefore, $\mathbb{P}(\tau_1 < K| \hat{Y}_0 = x_0^{(i,-)})$ is maximized when $m(\hat{Y}_0) = \langle s_2\rangle $ and we restrict our attention to this case.  

Now, it is easy to show that when $m \in \{\langle s_1\rangle,\langle s_1\rangle+\frac{2}{n},\dots,\langle s_2\rangle +\frac{2}{n}\}$, $\frac{p^-}{p^+} \leq \alpha(\beta) < 1$ for $n$ large enough.
This allows us to compare our chain to the following birth-death Markov chain $(N_i)_{i=0}^{\infty}$ over the state space $\mathcal{X} := \{\langle s_1\rangle,\langle s_1\rangle+\frac{2}{n},\dots,\langle s_2\rangle+\frac{2}{n}\}$ with $N_0 = \langle s_2\rangle$. Denote the transition matrix of the birth-death chain by $\Gamma$ and let $r = |\mathcal{X}|$. By our definition of $s_2$ and $s_1$, it is clear that $r \geq c(\beta)n$ for some constant $c(\beta) > 0$. We pick $n$ large enough so that $r \geq 2$. Define the transition probabilities for $m \in \mathcal{X}$, $m\neq \langle s_1\rangle$  and $m \neq \langle s_2\rangle + \frac{2}{n}$ as follows:
$$\Gamma\left(m, m+\frac{2}{n}\right) = \Gamma\left(\langle s_1\rangle,\langle s_1\rangle +\frac{2}{n}\right) = \frac{1}{1+\alpha}$$
$$\Gamma\left(m,m-\frac{2}{n}\right) = \Gamma\left(\langle s_2\rangle+\frac{2}{n}, \langle s_2\rangle\right) = \frac{\alpha}{1+\alpha}$$
$$\Gamma\left(\langle s_1\rangle,\langle s_1\rangle\right) = \frac{\alpha}{1+ \alpha}$$
$$\Gamma\left(\langle s_2\rangle+\frac{2}{n},\langle s_2\rangle+\frac{2}{n}\right) = \frac{1}{1+\alpha}\,.$$
We couple the walk $\Gamma$ with the magnetization chain as follows:
\begin{enumerate}
\item Let $m_t$ be the magnetization chain started such that $m_0 = \langle s_2\rangle $. Let $t_i$ be the $i$th time such that $m_{t_i} \neq m_{t_{i+1}}$ and $m_{t_i} \in \{\langle s_1\rangle,\langle s_1\rangle+\frac{2}{n},\dots,\langle s_2\rangle+\frac{2}{n}\}$. Clearly, $t_i \geq i$ and the set $\{t_i :i \geq 0\}$ is infinite a.s.
\item Let $N_{i+1} = N_i -1$ if $m_{t_i} = m_{t_{i+1}} - \frac{2}{n}$.
\item If $m_{t_i} = m_{t_{i+1}} + \frac{2}{n}$, then
\begin{equation*}
N_{i+1} = \begin{cases}
N_i -1 &\quad \text{w.p. $\gamma(m_{t_i})$}\\
N_i +1 &\quad \text{w.p. $1-\gamma(m_{t_i})$}\,,
\end{cases}
\end{equation*}
where $\gamma(m_{t_i}) = \frac{p^+(m_{t_i})+ p^-(m_{t_i})}{p^+(m_{t_i})}\left(\frac{\alpha}{1+\alpha}-\frac{p^{-}(m_{t_i})}{p^+(m_{t_i})+ p^-(m_{t_i})}\right)$.
\item
The coupling above ensures that $N_i \leq m_{t_i}$ a.s whenever $t_i \leq \tau_1$.
\end{enumerate}

Let $\tau_1^{\prime} := \inf\{t: N_t = \langle s_1\rangle\}$. It follows from the coupling argument above that for any $K \in \mathbb{N}$
\begin{equation}
\mathbb{P}(\tau_1 \leq K) \leq \mathbb{P}(\tau_1^{\prime} \leq K)\,.
\label{eq:hitting_time_inequality_1}
\end{equation}

For every $k\in \mathbb{N}$, define hitting time $T_k$ as the time taken by the birth-death chain $(N_i)$ to hit the set $\{\langle s_1\rangle,\langle s_2\rangle + \frac{2}{n}\}$ for the $k$th time. By irreducibility of this Markov chain, it is clear that $T_k < \infty$ a.s. for every $k$.
Let $A_i := \{N_{T_i} = \langle s_1\rangle \}$ and  $\eta := \inf \{i: N_{T_i} = \langle s_1\rangle\}$. Clearly, $\tau_1^{\prime} \geq \eta$ a.s. Therefore, 
\begin{equation}
\mathbb{P}(\tau_1 \leq K)\leq \mathbb{P}(\tau_1^{\prime} \leq K) \leq \mathbb{P}(\eta \leq K)
\label{eq:hitting_time_inequality_2}
\end{equation}

\begin{lemma}\label{number_hit_bound}
$\mathbb{P}(\tau_1 \leq K) \leq K^2 \mathbb{P}(A_1)$.
\end{lemma}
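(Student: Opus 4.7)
The plan is to combine a straightforward union bound with an inductive application of the strong Markov property to control each $\mathbb{P}(A_i)$. Since the paper has already shown $\mathbb{P}(\tau_1 \leq K) \leq \mathbb{P}(\eta \leq K)$, it suffices to prove $\mathbb{P}(\eta \leq K) \leq K^2 \mathbb{P}(A_1)$. Because $\{\eta \leq K\} = \bigcup_{i=1}^K A_i$, subadditivity gives
$$\mathbb{P}(\eta \leq K) \leq \sum_{i=1}^K \mathbb{P}(A_i),$$
so the task reduces to bounding $\mathbb{P}(A_i)$ in terms of $\mathbb{P}(A_1)$.

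Next I would prove $\mathbb{P}(A_i) \leq i\,\mathbb{P}(A_1)$ by induction on $i$. Conditioning on whether $A_{i-1}$ occurs,
$$\mathbb{P}(A_i) = \mathbb{P}(A_{i-1})\,\mathbb{P}(A_i \mid A_{i-1}) + \mathbb{P}(A_{i-1}^c)\,\mathbb{P}(A_i \mid A_{i-1}^c).$$
The first term is at most $\mathbb{P}(A_{i-1})$ trivially. For the second term, on the event $A_{i-1}^c$ the boundary state $N_{T_{i-1}}$ is forced to equal $\langle s_2\rangle + \tfrac{2}{n}$ (the only other element of $\{\langle s_1\rangle,\langle s_2\rangle + \tfrac{2}{n}\}$). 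The strong Markov property at $T_{i-1}$ then yields $\mathbb{P}(A_i \mid A_{i-1}^c) = \mathbb{P}_{\langle s_2\rangle + 2/n}(N_{T_1} = \langle s_1\rangle)$. Starting from $\langle s_2\rangle + \tfrac{2}{n}$, the chain either self-loops (with probability $\tfrac{1}{1+\alpha}$) or transitions to the unique interior neighbor $\langle s_2\rangle$, which it reaches after a geometric waiting time; from $\langle s_2\rangle$ the probability of hitting $\langle s_1\rangle$ before returning to $\langle s_2\rangle + \tfrac{2}{n}$ is precisely $\mathbb{P}(A_1)$ by definition. Hence $\mathbb{P}(A_i \mid A_{i-1}^c) = \mathbb{P}(A_1)$, so $\mathbb{P}(A_i) \leq \mathbb{P}(A_{i-1}) + \mathbb{P}(A_1)$, and the inductive bound follows from the trivial base case.

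Summing the per-index bound gives
$$\mathbb{P}(\eta \leq K) \leq \sum_{i=1}^K i\,\mathbb{P}(A_1) = \tfrac{K(K+1)}{2}\,\mathbb{P}(A_1) \leq K^2\,\mathbb{P}(A_1),$$
as required. I expect the main technical obstacle to be formalizing the reduction in Step 2: one must carefully verify that the sequence of boundary-hitting times $T_k$ is defined in such a way that the residual chain after $T_{i-1}$, conditioned on $N_{T_{i-1}} = \langle s_2\rangle + \tfrac{2}{n}$, has the same distribution over $N_{T_i}$ as a fresh chain launched from $\langle s_2\rangle$ has over $N_{T_1}$. This relies crucially on the birth-death structure (in particular that $\langle s_2\rangle + \tfrac{2}{n}$ has $\langle s_2\rangle$ as its unique interior neighbor) and on a careful interpretation of the hitting-time sequence in the presence of self-loops at the boundary. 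Once the Markov reduction is set up correctly, the inductive calculation is routine, and one could even sharpen the bound to $K\,\mathbb{P}(A_1)$ by conditioning $A_j$ on $A_1^c \cap \cdots \cap A_{j-1}^c$ directly; however, the looser $K^2$ bound is all that is needed downstream.
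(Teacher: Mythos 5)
Your proof follows the paper's argument step for step: the same identity $\{\eta \le K\}=\cup_{i=1}^K A_i$ and union bound, the same induction showing $\mathbb{P}(A_i)\le i\,\mathbb{P}(A_1)$ by conditioning on $A_{i-1}^c$ and restarting at the boundary state $\langle s_2\rangle + \tfrac{2}{n}$, and the same summation $\sum_{i\le K} i \le K^2$. One small overstatement: under the natural reading of $T_k$ as the $k$th index $t$ with $N_t$ in $\{\langle s_1\rangle,\langle s_2\rangle+\tfrac{2}{n}\}$, a self-loop at $\langle s_2\rangle+\tfrac{2}{n}$ immediately realizes $T_{i}$ with $N_{T_i}=\langle s_2\rangle+\tfrac{2}{n}$, so $\mathbb{P}(A_i\mid A_{i-1}^c)=\tfrac{\alpha}{1+\alpha}\mathbb{P}(A_1)$ rather than $\mathbb{P}(A_1)$---but since the induction only needs the inequality $\le\mathbb{P}(A_1)$ (which is all the paper claims), this does not affect the conclusion.
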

\begin{proof}
 From Equation~\eqref{eq:hitting_time_inequality_2},
$$\mathbb{P}(\tau_1 \leq K)\leq \mathbb{P}(\eta \leq K)\,.$$
From the definition of $\eta$ and $A_i$,we have $$\{ \eta \leq K\} = \cup_{i=1}^{K}A_i\,.$$
Therefore, 
\begin{equation} 
\mathbb{P}(\tau_1 \leq K) \leq \mathbb{P}(\cup_{i=1}^K A_i) \leq \sum_{i=1}^{K} \mathbb{P}(A_i)\,.
 \label{union_bound_eta}
  \end{equation}
  
 We first prove by induction that
 \begin{equation}
 \mathbb{P}(A_i) \leq i\mathbb{P}(A_1)\,.\label{eq:hitting_induction}
 \end{equation} This is trivially true for $i=1$. Suppose it is true for some $i$. Then, 
\begin{align*}
\mathbb{P}(A_{i+1}) &= \mathbb{P}(A_{i+1}| A_{i})\mathbb{P}(A_i) + \mathbb{P}(A_{i+1}|A^{c}_{i})\mathbb{P}(A^c_i) \\
&\leq \mathbb{P}(A_i) + \mathbb{P}(A_{i+1}|A^{c}_{i}) \\
&\leq \mathbb{P}(A_i) + \mathbb{P}(A_1)\\
&\leq (i+1)\mathbb{P}(A_1)\,,
\end{align*}
completing the induction.  Here we have used the fact that conditioned on the event $A^{c}_{i}$, the walk after $T_i$ is the same as the walk starting from $\langle s_2\rangle + \frac{2}{n}$ whereas the original walk at time $t=0$ starts from $\langle s_2\rangle$.  Therefore, $\mathbb{P}\left(N_{T_{i+1}} = \langle s_1\rangle| A_i^c\right) \leq \mathbb{P}\left(N_{T_1} = \langle s_1\rangle\right)$, which is the same as  $\mathbb{P}(A_{i+1}|A^{c}_{i}) \leq \mathbb{P}(A_1)$.
Combining Equation~\eqref{eq:hitting_induction} with Equation~\eqref{union_bound_eta}, we arrive at the conclusion of Lemma~\ref{number_hit_bound}.
\end{proof}
 For the sake of convenience, we rename the states of $\mathcal{X}$ to be elements in $\{0,..,r-1\}$ with the same ordering (i.e, $\langle s_1\rangle \to 0$, $\langle s_1\rangle+\frac{2}{n} \to 1,\dots,\langle s_2\rangle+\frac{2}{n} \to r-1$). Let $p = \frac{1}{1+\alpha}$ denote the probability of moving from state $m$ to $m+1$ and $1-p$ denote the probability of moving from $m$ to $m-1$. Let $P_m$ be the probability that the Markov chain starting at state $m$ hits $r-1$ before it hits $0$. The following lemma is a classic result about biased Gambler's ruin Markov chain. We assume that $n$ is large enough so that $r \geq 2$.
 
\begin{lemma}\label{hitting_bound}
$1 - P_{r-2} = \mathbb{P}(A_1) \leq (\frac{1-p}{p})^{r-2} = \alpha^{r-2}$.
\end{lemma}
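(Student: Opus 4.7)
The plan is to recognize this as a standard biased gambler's ruin problem and to evaluate $\mathbb{P}(A_1)$ via a martingale / optional stopping argument. After the renaming in the preceding paragraph, $(N_i)$ is a nearest-neighbor birth-death walk on $\{0,1,\dots,r-1\}$ with up-probability $p = 1/(1+\alpha)$ and down-probability $q = \alpha/(1+\alpha)$, reflected at the two endpoints. Starting from $N_0 = r-2$, the event $A_1$ is exactly the event that $N$ hits $0$ before hitting $r-1$ (i.e., reaches the ``left wall'' at the first visit to $\{0, r-1\}$). By definition $P_{r-2}$ is the probability of hitting $r-1$ before $0$ from $r-2$, so $1 - P_{r-2} = \mathbb{P}(A_1)$, justifying the first equality.

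For the inequality, I would introduce the process $M_n := \alpha^{N_n}$, where $\alpha = q/p < 1$, and stop it at the first visit to $\{0, r-1\}$ (the reflection at the endpoints is irrelevant up to this stopping time, so $M_n$ behaves like the standard gambler's walk). A one-line calculation shows that $M_n$ is a martingale on $\{1,\dots,r-2\}$:
\begin{equation*}
\mathbb{E}[\alpha^{N_{n+1}} \mid N_n = m] = p\,\alpha^{m+1} + q\,\alpha^{m-1} = \alpha^{m-1}(p\alpha^2 + q) = \alpha^{m-1}\cdot \alpha \cdot (p\alpha + q/\alpha)\alpha^{-1}\cdot \alpha = \alpha^m,
\end{equation*}
using $p\alpha^2 + q = \alpha(p\alpha + q \cdot p/q) = \alpha(p\alpha + p) = p\alpha(1+\alpha) = \alpha$.

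Applying the optional stopping theorem at $T := \inf\{n : N_n \in \{0, r-1\}\}$ (which has finite expectation since the walk is positive-recurrent on a finite set and the test function is bounded) gives
\begin{equation*}
\alpha^{r-2} = \mathbb{E}[\alpha^{N_T}] = \mathbb{P}(A_1)\cdot \alpha^0 + (1-\mathbb{P}(A_1))\cdot \alpha^{r-1},
\end{equation*}
so $\mathbb{P}(A_1) = \frac{\alpha^{r-2}(1-\alpha)}{1-\alpha^{r-1}}$. Since $r \geq 2$ and $\alpha \in (0,1)$, we have $1 - \alpha^{r-1} \geq 1-\alpha$, giving $\mathbb{P}(A_1) \leq \alpha^{r-2}$, as required.

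There is no real obstacle here — the only point requiring mild care is to verify that the martingale identity continues to hold at the boundary-adjacent states $1$ and $r-2$ (for the stopped process, this is automatic since we stop upon hitting $\{0, r-1\}$), and that the reflection at $0$ and $r-1$ does not affect the computation of $\mathbb{P}(A_1)$ since $A_1$ is determined by the walk strictly before absorption into $\{0, r-1\}$. Alternatively, the exact same bound follows from the classical closed-form $P_m = (1-\alpha^m)/(1-\alpha^{r-1})$ for biased Gambler's ruin, which one can also derive by solving the discrete Dirichlet problem directly.
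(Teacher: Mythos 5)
Your proof is correct, and it arrives at exactly the same closed form $\mathbb{P}(A_1) = \frac{\alpha^{r-2}(1-\alpha)}{1-\alpha^{r-1}}$ and the same final bound as the paper. The route is slightly different in technique though not in substance: the paper writes down the discrete Dirichlet recursion $P_m = pP_{m+1}+(1-p)P_{m-1}$ with boundary conditions $P_0=0$, $P_{r-1}=1$, and states the closed-form solution, whereas you obtain the identical formula via the exponential martingale $\alpha^{N_n}$ and optional stopping at the first visit to $\{0,r-1\}$. These are the two textbook derivations of biased gambler's ruin, and you even note the recursion approach as an alternative at the end. Both buy the same thing; the martingale argument spares you from verifying a guessed solution to the recursion, while the paper's one-liner is more compact. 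Your identification of $A_1$ with hitting $0$ before $r-1$ from $r-2$, and the final bound $1-\alpha^{r-1} \ge 1-\alpha$ using $r\geq 2$ and $\alpha<1$, are both correct and match the paper.
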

\begin{proof}
We have the following set of recursion equations:
$P_0 = 0$, $P_{r-1} = 1$ and for all $0 <i < r-1 $, $P_m = pP_{m+1} + (1-p)P_{m-1}$.
One can check that the unique solution to this set of equations is $$P_m = \frac{(\frac{1-p}{p})^m -1}{(\frac{1-p}{p})^{r-1}-1} = \frac{\alpha^m - 1}{\alpha^{r-1} - 1}\,.
$$
By definition of the event $A_1$, 
\begin{align*}
1 - P_{r-2} &= \mathbb{P}(A_1) \\
&= \alpha^{r-2}\frac{1-\alpha}{1- \alpha^{r-1}} \\
&\leq  \alpha^{r-2}\,. &\qedhere
\end{align*}
\end{proof}

From Lemmas~\ref{number_hit_bound} and~\ref{hitting_bound}, we conclude that
$$\mathbb{P}(\tau_1 \leq K) \leq K^2 \alpha^{r-2} \,.$$

As shown above, $r -2 \geq c(\beta )n$ for constant $c(\beta) > 0$ and $\alpha = \alpha(\beta) < 1$. Therefore, for some constant $c_1(\beta) > 0$,
$$\mathbb{P}(\tau_1 \leq K) \leq K^2 \exp{(-c_1(\beta)n)}\,,$$
and this completes the proof.

\end{document}